\author{Raffaele Carbone}
\date{}
\title{The direct image of generalized divisors and the Norm map between compactified Jacobians}
\address{Raffaele Carbone\\ Dipartimento di Matematica e Fisica\\ Universit\`{a} Roma 3\\ Largo San Leonardo Murialdo 1\\ 00146\\ Rome\\ Italy}
\email{raffaelemarco.carbone@uniroma3.it}
\keywords{Generalized divisors, Generalized line bundles, Norm map, Compactified Jacobians,Prym variety}
\subjclass[2010]{14C20; 14H40, 14D20}
\theoremstyle{plain}
\newtheorem{thm}{Theorem}[section] % reset theorem numbering for each chapter
\theoremstyle{definition}
\newtheorem{defi}[thm]{Definition}
\newtheorem{deflemma}[thm]{Definition/Lemma}
\newtheorem{rmk}[thm]{Remark}
\newtheorem{ex}[thm]{Example}
\theoremstyle{plain}
\newtheorem{prop}[thm]{Proposition}
\newtheorem{lemma}[thm]{Lemma}
\newtheorem{cor}[thm]{Corollary}
\newcommand{\Spec}{\operatorname{Spec}}
\newcommand{\Supp}{\operatorname{Supp}}
\newcommand{\Pic}{\operatorname{Pic}}
\newcommand{\WDiv}{\operatorname{WDiv}}
\newcommand{\GPic}{\operatorname{GPic}}
\newcommand{\GDiv}{\operatorname{GDiv}}
\newcommand{\Prin}{\operatorname{Prin}}
\newcommand{\CDiv}{\operatorname{CDiv}}
\newcommand{\Nm}{\operatorname{Nm}}
\newcommand{\Hilb}{\operatorname{Hilb}}
\newcommand{\lHilb}{\operatorname{{}^\ell Hilb}}
\newcommand{\lrho}{\operatorname{{}^\ell \rho}}
\newcommand{\Jgen}{\operatorname{\textit{G}\mathbb{J}}}
\newcommand{\JgenSch}{\operatorname{\textit{GJ}}}
\newcommand{\Jbar}{\overline{\mathbb{J}}}
\newcommand{\JbarSch}{\overline{J}}
\newcommand{\Jtf}{\overline{\mathbb{J}}_{tf}}
\newcommand{\JtfSch}{\overline{J}_{tf}}
\newcommand{\rk}{\operatorname{rk}}
\newcommand{\rr}{\operatorname{\textbf{rank}}}
\newcommand{\mult}{\operatorname{mult}}
\newcommand{\Prbar}{\overline{\Pr}}
\newcommand{\Fitt}{\operatorname{Fitt}}
\newcommand{\Nn}{\mathcal{N}}
\newcommand{\Z}{\mathbb{Z}}
\newcommand{\Zz}{\mathcal{Z}}
\newcommand{\Kk}{\mathcal{K}}
\newcommand{\A}{\mathcal{A}}
\newcommand{\rvect}{{\underline{r}}}
\newcommand{\Ee}{\mathcal{E}}
\newcommand{\Oo}{\mathcal{O}}
\newcommand{\Ii}{\mathcal{I}}
\newcommand{\Jj}{\mathcal{J}}
\newcommand{\Ff}{\mathcal{F}}
\newcommand{\Mm}{\mathcal{M}}
\newcommand{\Ll}{\mathcal{L}}
\newcommand{\p}{\mathfrak{p}}
\newcommand{\q}{\mathfrak{q}}
\DeclareMathOperator{\Hhom}{\mathcal{H}\textit{om}}
\DeclareMathOperator{\id}{id}
      \string\usetikzlibrary{decorations.markings} to use arrows with markings}{}}{}%
\providecommand*{\twoheadrightarrowfill@}{%
  \arrowfill@\relbar\relbar\twoheadrightarrow
}
\providecommand*{\twoheadleftarrowfill@}{%
  \arrowfill@\twoheadleftarrow\relbar\relbar
}
\providecommand*{\xtwoheadrightarrow}[2][]{%
  \ext@arrow 0579\twoheadrightarrowfill@{#1}{#2}%
}
\providecommand*{\xtwoheadleftarrow}[2][]{%
  \ext@arrow 5097\twoheadleftarrowfill@{#1}{#2}%
}
\begin{document}

\begin{abstract}
Given a finite, flat morphism between embeddable noetherian schemes of pure dimension 1, we define the notion of direct and inverse image for generalized divisors and generalized line bundles. In the case when we deal with (possibly reducible, non-reduced) projective curves over a field and the codomain curve is smooth, we introduce the compactified Jacobians parametrizing torsion-free rank-1 sheaves and we study the Norm and inverse image maps between compactified Jacobians. Finally, we introduce and study the Prym stack defined as the kernel of the Norm map.
\end{abstract}

\maketitle

\setcounter{tocdepth}{1} %non mostra le sottosezioni
\tableofcontents

\section*{Introduction}\label{SectionIntroduction}

Let $\pi: X \rightarrow Y$ be a finite, flat morphism between noetherian curves. The first aim of this paper is to define and study the direct and inverse image for generalized divisors and generalized line bundles on $X$ and $Y$. Moreover, in the case when $X$ and $Y$ are projective curves over a field $k$, our aim is to define and study the direct and inverse image for families of generalized divisors, and the notion of Norm and inverse image for torsion-free rank-1 sheaves.

Recall that the notion of Cartier divisor has been generalized by Hartshorne in his papers \cite{Har94} and \cite{Har07}, in order to include as effective divisors any codimension-one subscheme without embedded points, defined possibly by more than a single equation. A \textit{generalized divisor} on a curve $X$ of pure dimension 1 is a nondegenerate fractional ideal of $\Oo_X$-modules; generalized divisors up to linear equivalence are equivalent to \textit{generalized line bundles}, i.e. pure coherent sheaves which are locally free of rank 1 at each generic point. Generalized line bundles can be generalized further by considering \textit{torsion-free sheaves ``of rank 1''}, with different notions of rank.%; such generalizations lose the geometric counterpart.

The notions of direct and inverse image for Cartier divisors are well-known; these notions are compatible with linear equivalence, giving rise to analogous well-known maps for line bundles, called the Norm and the inverse image maps between the Jacobian schemes (see for example \cite[§21]{EgaIV4} and \cite{HP}). To the best of our knowledge, the same notions for generalized divisors, generalized line bundles and torsion-free rank-1 sheaves have not been discussed so far.

In the first part of the paper we collect and review in a coherent framework definitions and results about the Norm map, generalized divisors, torsion-free rank-1 sheaves; we also recall the construction and the properties of the Fitting ideal, a construction used in algebraic geometry to produce a direct image of a morphism of schemes. In the second part, basing upon geometric intuitions, we propose definitions of the direct and inverse image for generalized divisors and generalized line bundles, and we study their properties. Then, restricting to the case when $X$ and $Y$ are projective curves over a field, we consider the same notions for families of effective generalized divisors, parametrized by the Hilbert scheme. In particular, the direct image between Hilbert schemes can be defined only if the curve $Y$ is smooth. In the third part of the paper, under the condition that $Y$ is smooth, we study the Norm and the inverse image maps between the compactified Jacobians parametrizing torsion-free rank-1 sheaves on $X$ and $Y$; assuming that $X$ is reduced with locally planar singularities, we also study the fibers of the Norm map.

\vspace{1em}
The structure of this paper is the following. In Section \ref{SectionReviewNorm} we review in details the definition of direct and inverse image for Cartier divisors and the definition of Norm map for line bundles. In Section \ref{SectionReviewJacobians} we review and compare different classes of coherent sheaves with well-defined rank and degree; in particular we distinguish among line bundles, generalized line bundles, torsion-free rank-1 sheaves and torsion-free sheaves of polarized rank 1. The moduli stacks corresponding to such classes are introduced and compared. In Section \ref{SectionReviewDivisors} we review the theory of generalized divisors on a curve and its connection with the theory of generalized line bundles, via the Abel map; we also introduce the geometric objects that parametrize well-behaved families of generalized divisors on a fixed curve. In Section \ref{SectionReviewFitting} we recall briefly the notion of Fitting ideal associated to any coherent sheaf. In Section \ref{SectionDirectImageSet} we introduce and discuss in details the notion of direct and inverse image for generalized divisors and generalized line bundles. In Section \ref{SectionDirectImageFamilies} we study the notion of direct and  inverse image for families of effective generalized divisors. In Section \ref{SectionNormOnJbar} we define the Norm map and the inverse image map between compactified Jacobians, and their relation with the generalized Abel map. Finally, in Section \ref{SectionPrym} we study the fibers of the Norm map and we introduce the Prym stack as the fiber over the trivial sheaf.

\vspace{1em}

The motivation of this work comes from  the Hitchin fibration for the moduli space of $G$-Higgs bundles on a fixed smooth and projective curve $C$, for $G$ varying among classical groups such as GL, SL, PGL, SO, Sp. When $G=$ GL, the Hitchin fibres are compactified Jacobians of certain (possibly singular) covers of $C$, called spectral curves, associated to the fibre. For others $G$, a description of the generic Hitchin fibres (i.e. those for which the singular locus of the spectral curve is empty or as small as possible) is given with the help of a Prym variety, i.e. the locus in $J(X)$ of bundles whose Norm with respect to a given finite, flat morphism is zero. The study of the Norm map between compactified Jacobians allows us to extend this kind of description to the remaining Hitchin fibres. 

\vspace{1em}

\textbf{Acknowledgements}. This work is part of my PhD thesis, done under the supervision of Filippo Viviani at Roma Tre University. I am sincerely grateful to him for giving me the problem and for his continuous guidance and proofreading. I would also like to thank Edoardo Sernesi for helpful comments and discussions.

\vspace{1em}

\textbf{Notation}. In the absence of further specifications, by \textit{curve} we refer to a noetherian scheme of pure dimension 1 which is embeddable (i.e. it can be embedded as a closed subscheme of a regular scheme). This implies that the canonical (or dualizing) sheaf $\omega$ of the curve is well defined. 

\section{Review of the Norm map}\label{SectionReviewNorm}
In this section, we resume the definition and properties of the direct and inverse image for Cartier divisors and the Norm map for line bundles, associated to a finite, flat morphism between curves. For a complete treatment, the standard reference is \cite[§21]{EgaIV4} together with \cite[§6.5]{EgaII}. We start with the definition of the norm at the level of sheaves of algebras.

\vspace{1em}

Let $\pi: X \rightarrow Y$ be a finite, flat morphism between curves of degree $n$. Since $Y$ is noetherian, this is equivalent to require that $f$ is finite and locally free, i.e. that  $\pi_* \Oo_X$ is a locally free $\Oo_Y$-algebra \cite[\href{https://stacks.math.columbia.edu/tag/02K9}{Tag 02K9}]{stacks-project}.
\begin{defi}\label{NormOfSheaves}
The sheaf $\pi_* \Oo_X$ is endowed with a homomorphism of $\Oo_Y$-modules, called the norm and defined on local sections by:
\begin{align*}
\Nn_{Y/X}: \hspace{2em} \pi_* \Oo_X &\longrightarrow \Oo_Y \\
s &\longmapsto \det(\cdot s)
\end{align*}
where $\cdot s: \pi_* \Oo_X \rightarrow  \pi_* \Oo_X $  is the multiplication map induced by $s$ and $\det(\cdot s)$ is given locally by the determinant of the matrix with entries in $\Oo_Y$ associated to $\cdot s$.
\end{defi}
 By the standard properties of determinants, for local sections $s,s'$  of $\pi_*\Oo_X$ and any section $\mu$ of $\Oo_Y$, we have:
\begin{align}
\Nn_{Y/X}(s \cdot s') = \Nn_{Y/X}(s) \cdot \Nn_{Y/X}(s'), \hspace{3em} \Nn_{Y/X}(\mu s)= \mu^n\Nn_{Y/X}(s). \label{normpr}
\end{align}

\vspace{1em}

Before giving the definitions of the present section, we need a technical lemma.
\begin{lemma}\label{TrivialOnSaturated}
	Let $\pi: X \rightarrow Y$ be a finite, flat morphism of degree $n$ between curves and let $\Ll$ be an invertible $\Oo_X$-module. Then, $\pi_*\Ll$ is an invertible $\pi_*\Oo_X$-module and there exists an open affine cover  $\{ V_i \}_{i \in I}$ of $Y$ s.t. $\pi_*\Oo_X$ is trivial on each $V_i$ and $\pi_*\Ll$ is trivial both as a $\pi_*\Oo_X$-module and as an $\Oo_Y$-module on each $V_i$.
\end{lemma}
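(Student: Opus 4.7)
The plan is to first establish the invertibility of $\pi_*\Ll$ as a $\pi_*\Oo_X$-module by transferring the property across the equivalence induced by $\pi_*$ on sheaves over an affine morphism, and then construct the common trivializing cover point by point, using the local structure of the finite flat morphism $\pi$ together with the closedness of $\pi$.

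For the first claim, I would use that $\pi$ is finite, hence affine, so that $\pi_*$ provides an equivalence between the categories of quasi-coherent $\Oo_X$-modules and quasi-coherent $\pi_*\Oo_X$-modules on $Y$. This equivalence preserves the property of being locally free of rank 1, so $\pi_*\Ll$ is invertible as a $\pi_*\Oo_X$-module because $\Ll$ is invertible on $X$.

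To construct the cover, fix an arbitrary $y \in Y$. Since $\pi_*\Oo_X$ is locally free of rank $n$ over $\Oo_Y$ (from $\pi$ being finite and flat of degree $n$), there exists an affine open $V_1 \ni y$ with $\pi_*\Oo_X|_{V_1} \cong \Oo_{V_1}^n$. For $\pi_*\Ll$, the fiber $\pi^{-1}(y)$ is a finite set of points $\{x_1, \dots, x_r\}$; pick pairwise disjoint affine opens $W_i \ni x_i$ on which $\Ll$ trivializes (possible because distinct points of a scheme admit disjoint affine neighborhoods and $\Ll$ is invertible), and set $W = \bigsqcup_i W_i$, so that $\Ll|_W$ is free. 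Because $\pi$ is finite it is closed, so $\pi(X \setminus W)$ is closed in $Y$ and does not contain $y$; then $V_2 := Y \setminus \pi(X \setminus W)$ is an open neighborhood of $y$ with $\pi^{-1}(V_2) \subseteq W$. Consequently $\Ll$ is free on $\pi^{-1}(V_2)$, which via the affine equivalence means $\pi_*\Ll|_{V_2}$ is free of rank $1$ as a $\pi_*\Oo_X|_{V_2}$-module.

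Choosing $V$ to be any affine open containing $y$ inside $V_1 \cap V_2$ yields a single neighborhood on which both trivializations hold: $\pi_*\Oo_X|_V \cong \Oo_V^n$ and $\pi_*\Ll|_V \cong \pi_*\Oo_X|_V$ as $\pi_*\Oo_X|_V$-modules, from which the $\Oo_V$-module isomorphism $\pi_*\Ll|_V \cong \Oo_V^n$ is immediate. Letting $y$ range over $Y$ produces the required cover. The only non-routine point is synchronizing the two trivializations, and this is handled by the closedness of the finite morphism $\pi$, which allows one to propagate a trivialization of $\Ll$ around the finite set $\pi^{-1}(y)$ to a trivialization of $\pi_*\Ll$ on an open neighborhood of $y$ in $Y$.
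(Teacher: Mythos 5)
Your overall strategy---reprove by hand that $\pi_*\Ll$ is an invertible $\pi_*\Oo_X$-module by trivializing $\Ll$ on a saturated open neighborhood of a fibre, then intersect with a cover trivializing $\pi_*\Oo_X$---is sound, and the closedness trick producing $V_2 = Y \setminus \pi(X\setminus W)$ is exactly the right tool. But one step as written is false: you choose pairwise disjoint affine opens $W_i \ni x_i$ around the points of the fibre $\pi^{-1}(y)$, justified by the claim that distinct points of a scheme admit disjoint affine neighborhoods. This is not true: any two non-empty open subsets of an irreducible scheme meet, so two distinct points lying on a common irreducible component never have disjoint neighborhoods. A concrete instance inside the setting of the lemma is $\pi\colon \Pp^1 \to \Pp^1$, $z \mapsto z^2$, whose fibre over $1$ consists of the two closed points $\pm 1$; no disjoint opens separate them. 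Since the disjointness of the $W_i$ is precisely what lets you glue the local generators into a trivialization of $\Ll$ on $W = \bigsqcup W_i$, this is a genuine gap, not a cosmetic one.

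The gap is repairable while keeping the rest of your argument intact. Because $\pi$ is affine, the whole fibre $\pi^{-1}(y)$ sits inside the single affine open $U=\pi^{-1}(V)$ for any affine $V\ni y$; write $U=\Spec A$ and $\Ll|_U=\widetilde{M}$ with $M$ invertible. The localization of $A$ at the finitely many primes of $\pi^{-1}(y)$ is semi-local, and a finite projective module of rank one over a semi-local ring is free; a generator of that localization extends to a generator of $M$ over an open $W$ with $\pi^{-1}(y)\subseteq W\subseteq U$. From there your argument resumes unchanged: closedness of the finite morphism $\pi$ gives $V_2$ with $\pi^{-1}(V_2)\subseteq W$, and intersecting with $V_1$ finishes the construction. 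Note also that your first paragraph, which tries to deduce invertibility of $\pi_*\Ll$ over $\pi_*\Oo_X$ purely from the equivalence of categories for affine morphisms, glosses over the same point: local freeness is measured on opens of $X$ for $\Ll$ but on opens of $Y$ for $\pi_*\Ll$, and bridging the two is exactly what the semi-local/closedness argument accomplishes. The paper avoids all of this by citing EGA II, Proposition 6.1.12 for that fact and then simply refining the two trivializing covers, so your route is more self-contained but must be patched as above.
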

\begin{proof}
	By \cite[\href{https://stacks.math.columbia.edu/tag/02K9}{Tag 02K9}]{stacks-project},  $\pi_*\Oo_X$ is a locally free $\Oo_Y$-module; denote with $\{ W_{\alpha} \}_{\alpha \in A}$ an open affine cover such that  ${\pi_*\Oo_X}_{|W_{\alpha}} \simeq {\Oo_Y^n}_{|W_{\alpha}}$ for each $\alpha \in A$. By \cite[Proposition 6.1.12]{EgaII}, $\pi_*\Ll$ is an invertible $\pi_*\Oo_X$-module; denote with $\{ W'_\beta \}_{\beta \in B}$ an open affine cover such that $ {\pi_*\Ll}_{|W'_\beta} \simeq {\pi_*\Oo_X}_{|W'_\beta} $ for each $\beta \in B$. Let $ \{ V_{\alpha, \beta} = W_\alpha \cap W'_\beta \}_{(\alpha, \beta) \in A \times B}$ be a common refinement. Then, for $I=A \times B$, $\{ V_i \}_{i \in I}$ is an open affine cover of $Y$ such that $\pi_*\Oo_X$ is trivial on each open of the cover and $\pi_*\Ll$ is trivial both as $\pi_*\Oo_X$-module and as  $\Oo_Y$-module on each open.
\end{proof}

\subsection{Direct and inverse image of Cartier divisors}
We recall now the definitions of direct and inverse image for Cartier divisors. For any curve $X$, denote with $\Kk_X$ the sheaf of total quotient rings of the curve. Recall that the set of \textit{Cartier divisors} on $X$ is the set of global sections of the quotient sheaf of multiplicative groups $\Kk_X^*/\Oo_X^*$: \[
\CDiv(X) = \Gamma(X, \Kk_X^*/\Oo_X^*).
\]
Although the group operation on $\Kk_X^*/\Oo_X^*$ is multiplication, the group operation on $\CDiv(X)$ is denoted additively.  The group of Cartier divisors of $X$ contains the subgroup $\Prin(X)$ of \textit{principal divisors} defined as the image of the canonical homomorphism \[
\Gamma(X, \Kk_X^*) \longrightarrow \Gamma(X, \Kk_X^*/\Oo_X^*).
\]
It is well known \cite[§21.2]{EgaIV4}  that the set of Cartier divisors is in one-to-one correspondence with the set of invertible fractional ideals, i.e. the set of subsheaves $\Ii \subseteq \Kk_X$ that are also invertible $\Oo_X$-modules \cite[Proposition 21.2.6]{EgaIV4}. If $D \in \Gamma(X, \Kk_X^*/\Oo_X^*)$ is represented by an open cover $\{U_i\}_{i \in I}$ and a collection of sections $f_i \in \Gamma(U_i, \Kk_X^*)$ such that $f_i/f_j \in \Gamma(U_i \cap U_j, \Oo_X^*)$, the corresponding fractional ideal $\Ii_D$ is the sub $\Oo_X$-module of $\Kk_X$ equal to $\Oo_{X|U_i}\cdot f_i$ on any $U_i$.\footnote{Differently from Grothendieck's EGA, we pick $f_i$ instead of $f_i^{-1}$. This does not affect the other results.} Under this correspondence, the sum of Cartier divisors corresponds to the multiplication of fractional ideals.

\begin{deflemma}\label{DirectInverseImageForCartDef}
Let $\pi: X \rightarrow Y$ be a finite, flat morphism between curves and let $\pi^\sharp: \Oo_Y \rightarrow \pi_* \Oo_X$ be the associated canonical map of sheaves of modules.

Let $D \in \CDiv(X)$ be a Cartier divisor on $X$ corresponding to the invertible fractional ideal $\Ii$ and let $\{ V_i \}_{i \in I}$ be an affine cover as in Lemma \ref{TrivialOnSaturated}. Then, on each $V_i$, $\pi_*\Ii_{|V_i}$ is equal to the subsheaf $h_i \cdot (\pi_*\Oo_X)_{|V_i}$ of $(\pi_*\Kk_X)_{|V_i}$ generated by a meromorphic regular section $h_i=f_i/g_i$, with $f_i, g_i \in \Gamma(V_i, \pi_* \Oo_X^*)$. The \textit{direct image of $D$}, denoted $\pi_*(D)$, is the Cartier divisor on $Y$ corresponding to the fractional ideal generated on any $V_i$ by the meromorphic regular section $\Nn_{Y/X}(f_i)/\Nn_{Y/X}(g_i) \in \Gamma(V_i,\Kk_Y)$.

Let $M \in \CDiv(Y)$ be a Cartier divisor on $Y$ corresponding to the invertible fractional ideal $\Jj \subseteq \Kk_Y$, and let $\{ V_i \}_{i \in I}$ be an affine cover of $Y$ such that, on each $V_i$, $\Jj_{|V_i}$ is equal to the fractional ideal of $\Oo_{Y|V_i}$-modules generated by a meromorphic regular section $u_i=s_i/t_i$ with $s_i, t_i \in \Gamma(V_i, \Oo_Y^*)$. The \textit{inverse image} of $M$, denoted $\pi^*(M)$, is the Cartier divisor on $X$ corresponding to the fractional ideal generated on any $U_i = \pi^{-1}(V_i)$ by the meromorphic regular section $\pi^\sharp(s_i)/\pi^\sharp(t_i) \in \Gamma(U_i, \Kk_X)$.
\end{deflemma}
\begin{proof}
The definition of direct image is a reformulation of the one given in \cite[§21.5.5]{EgaIV4}. The fact that $\Nn_{Y/X}(f_i)/\Nn_{Y/X}(g_i)$ is a regular meromorphic section follows from the discussion in \cite[§21.5.3]{EgaIV4}; the definition is independent of the choice of the $h_i$'s since the norm of sheaves is multiplicative.

The definition of inverse image is a reformulation of \cite[Definition 21.4.2]{EgaIV4}, obtained as a consequence of \cite[§21.4.3]{EgaIV4}, together with the result of \cite[Proposition 21.4.5]{EgaIV4} that ensures that $\pi^\sharp(s_i)/\pi^\sharp(t_i)$ is regular.
\end{proof}

\begin{prop}
Let $\pi: X \rightarrow Y$ be a finite, flat morphism of curves of degree $n$. The direct and inverse image for Cartier divisors induce homorphisms of groups:
\begin{align*}
\pi_*: \CDiv(X) &\longrightarrow \CDiv(Y) \\
\pi^*: \CDiv(Y) &\longrightarrow \CDiv(X),
\end{align*}
such that $\pi_* \circ \pi^*$ is the multiplication map by $n$.
\end{prop}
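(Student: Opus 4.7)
The three claims — homomorphism for $\pi_*$, homomorphism for $\pi^*$, and $\pi_*\circ\pi^* = n\cdot\mathrm{id}$ — all reduce to straightforward local computations in the fractional-ideal presentation. The plan is to unwind the definitions of the two maps on local generators and invoke the multiplicativity properties of the norm (\ref{normpr}) and of $\pi^\sharp$.

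First I would handle $\pi_*$. Given $D_1,D_2\in\CDiv(X)$ with associated invertible fractional ideals $\Ii_1,\Ii_2\subseteq\Kk_X$, the sum $D_1+D_2$ corresponds to $\Ii_1\cdot\Ii_2$. Pick, via Lemma \ref{TrivialOnSaturated}, a common affine cover $\{V_i\}$ on which $\pi_*\Oo_X$ is free of rank $n$ and $\pi_*\Ii_1$, $\pi_*\Ii_2$ (hence $\pi_*(\Ii_1\Ii_2)$) are locally trivial $\pi_*\Oo_X$-modules with meromorphic regular generators $h^{(i)}_1$, $h^{(i)}_2$, $h^{(i)}_1 h^{(i)}_2$. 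Applying Definition/Lemma \ref{DirectInverseImageForCartDef} and the multiplicativity $\Nn_{Y/X}(h^{(i)}_1 h^{(i)}_2)=\Nn_{Y/X}(h^{(i)}_1)\Nn_{Y/X}(h^{(i)}_2)$ from (\ref{normpr}), I get that $\pi_*(D_1+D_2)$ is represented by the product of the generators of $\pi_*(D_1)$ and $\pi_*(D_2)$, hence equals $\pi_*(D_1)+\pi_*(D_2)$.

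For $\pi^*$, the argument is similar but easier. Given $M_1,M_2\in\CDiv(Y)$ with fractional ideals locally generated on a common refinement $\{V_i\}$ by meromorphic regular sections $u^{(i)}_1$, $u^{(i)}_2$, the sum $M_1+M_2$ is locally generated by $u^{(i)}_1 u^{(i)}_2$. Since $\pi^\sharp$ is a ring homomorphism, $\pi^\sharp(u^{(i)}_1 u^{(i)}_2)=\pi^\sharp(u^{(i)}_1)\pi^\sharp(u^{(i)}_2)$, so $\pi^*(M_1+M_2)=\pi^*(M_1)+\pi^*(M_2)$ by the definition of $\pi^*$ in Definition/Lemma \ref{DirectInverseImageForCartDef}.

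Finally, for $\pi_*\circ\pi^*$, take $M\in\CDiv(Y)$ with local generators $u_i$. By construction, $\pi^*(M)$ has local generator $\pi^\sharp(u_i)\in\pi_*\Kk_X$, and then $\pi_*(\pi^*(M))$ is represented locally by $\Nn_{Y/X}\!\bigl(\pi^\sharp(u_i)\bigr)$. The key identity is
\[
\Nn_{Y/X}\!\bigl(\pi^\sharp(u_i)\bigr)=u_i^{\,n},
\]
which follows from the second formula in (\ref{normpr}) applied with $\mu=u_i$ and $s=1$ (since multiplication by $\pi^\sharp(u_i)$ on the free $\Oo_Y$-module $\pi_*\Oo_X$ of rank $n$ acts as scalar multiplication by $u_i$ and therefore has determinant $u_i^n$). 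Thus $\pi_*(\pi^*(M))$ corresponds to the fractional ideal generated locally by $u_i^{\,n}$, i.e.\ to $nM$. The whole argument is really a bookkeeping exercise, and the only genuine input is the scalar-norm identity just displayed; there is no serious obstacle.
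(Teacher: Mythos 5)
Your proposal is correct, and the local computations you carry out (multiplicativity of $\Nn_{Y/X}$ for $\pi_*$, the ring-homomorphism property of $\pi^\sharp$ for $\pi^*$, and the scalar-norm identity $\Nn_{Y/X}(\pi^\sharp(u))=u^n$ for the composition) are exactly the content of the EGA results \cite[§21.5.5.1, Proposition 21.5.6]{EgaIV4} that the paper's proof simply cites. The only cosmetic point is that, per Definition/Lemma \ref{DirectInverseImageForCartDef}, the meromorphic generators should be kept as quotients $f_i/g_i$ of regular sections with the norm applied to numerator and denominator separately, but this is already built into the definitions and does not affect your argument.
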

\begin{proof}
The direct image is a homomorphism thanks to \cite[§21.5.5.1]{EgaIV4}, while the inverse image is a homomorphism as a consequence of \cite[Definition 21.5.5.1]{EgaIV4}. The result on $\pi_* \circ \pi^*$ is stated in \cite[Proposition 21.5.6]{EgaIV4}
\end{proof}

\subsection{Norm of line bundles}
We come now to the definition of the Norm map for invertible $\Oo_X$-modules.

\begin{deflemma}
Let $\pi: X \rightarrow Y$ be a finite, flat morphism between curves and let $\Ll$ be an invertible $\Oo_X$-module. Let $\{ V_i \}_{i \in I}$ be an affine cover of $Y$ as in Lemma \ref{TrivialOnSaturated}. In particular, there is for any $i \in I$ an isomorphism $\lambda_i: (\pi_* \Ll)_{|V_i} \rightarrow (\pi_*\Oo_X)_{|V_i}$. For any $i,j \in I$, the isomorphism $\omega_{ij} :=
\lambda_i \circ \lambda_j^{-1}$ can be 
interpreted as a section of $\pi_* \Oo_X$ over $V_i \cap V_j$. The collection of norms $\{\Nn_{Y/X}
(\omega_{ij})\}_{i,j \in I}$ is a 1-cocycle with values in $\Oo_Y^*$.

The cocyle $\{\Nn_{Y/X}
(\omega_{ij})\}_{i,j \in I}$ defines up to isomorphism an invertible $\Oo_Y$-module, which is called the \textit{Norm of }$\Ll$ \textit{relative to} $\pi$ and  is denoted as $\Nm_\pi(\Ll)$ or $\Nm_{Y/X}(\Ll)$.
\end{deflemma}
\begin{proof}
	If $\Ll'$ is an invertible $\Oo_X$-module isomorphic to $\Ll$ through an isomorphism $h: \Ll' \rightarrow \Ll$, then a local trivialization for $\pi_*\Ll'$ over $V_i$ is given by $ \lambda_i \circ (\pi_* h) $. Running over all  $i\in I$, the resulting 1-cocycle $\{\Nn_{Y/X}(\lambda_i \circ \pi_*h \circ \pi_*h^{-1}\circ  \lambda_j^{-1} ) \}_{i,j \in I}$ is the same as for $\Ll$.
\end{proof}

Recall that, for any curve $X$, the Picard group of $X$ is the set $\Pic(X)$ of isomorphism classes of invertible $\Oo_X$-modules, endowed with the operation of tensor product. 

\begin{prop}
	Let $\pi: X \rightarrow Y$ be a finite, flat morphism of curves of degree $n$. The Norm and the inverse image map for line bundles induce homomorphism of groups:
\begin{align*}
\Nm_\pi: \Pic(X) &\longrightarrow \Pic(Y) \\
\pi^*: \Pic(Y) &\longrightarrow \Pic(X),
\end{align*}
such that $\Nm_\pi \circ \pi^*$ is the $n$-th tensor power.
\end{prop}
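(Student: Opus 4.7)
The plan is to work throughout with the \v{C}ech cocycle description given in the Definition/Lemma above, reducing the three claims to the multiplicative and homogeneity properties of the norm recorded in \eqref{normpr}.

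First I would check that $\Nm_\pi$ respects the group operation. Given invertible $\Oo_X$-modules $\Ll$ and $\Ll'$, I choose a single affine cover $\{V_i\}_{i\in I}$ of $Y$ that works simultaneously for both, as provided by Lemma \ref{TrivialOnSaturated} applied to $\Ll$, $\Ll'$ and $\Ll\otimes_{\Oo_X}\Ll'$; let $\lambda_i, \lambda_i'$ be the resulting local trivializations of $\pi_*\Ll,\,\pi_*\Ll'$ as $\pi_*\Oo_X$-modules, giving cocycles $\omega_{ij},\omega_{ij}'$ in $(\pi_*\Oo_X)^*$. Since $\pi_*$ is compatible with the tensor product of invertible $\pi_*\Oo_X$-modules, $\lambda_i\otimes\lambda_i'$ trivializes $\pi_*(\Ll\otimes\Ll')$ with cocycle $\omega_{ij}\omega_{ij}'$, and the first identity in \eqref{normpr} gives $\Nn_{Y/X}(\omega_{ij}\omega_{ij}')=\Nn_{Y/X}(\omega_{ij})\Nn_{Y/X}(\omega_{ij}')$, hence $\Nm_\pi(\Ll\otimes\Ll')\cong\Nm_\pi(\Ll)\otimes\Nm_\pi(\Ll')$.

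For $\pi^*$ the statement is standard: if $\Mm,\Mm'$ are invertible on $Y$ with transition cocycles $\mu_{ij},\mu_{ij}'$ on a common cover $\{V_i\}$, then $\pi^*\Mm$ and $\pi^*\Mm'$ are trivialized on $\{\pi^{-1}(V_i)\}$ with cocycles $\pi^\sharp(\mu_{ij}),\pi^\sharp(\mu_{ij}')$, whose product represents $\pi^*(\Mm\otimes\Mm')$.

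Finally, for the identity $\Nm_\pi\circ\pi^*=(-)^{\otimes n}$, I take $\Mm\in\Pic(Y)$ trivialized on an affine cover $\{V_i\}$ of $Y$ (chosen refined enough to satisfy Lemma \ref{TrivialOnSaturated} for $\Ll=\pi^*\Mm$), with transition cocycle $\{\mu_{ij}\}\subset\Oo_Y^*$. The projection formula $\pi_*\pi^*\Mm\simeq\Mm\otimes_{\Oo_Y}\pi_*\Oo_X$ shows that $\pi_*\pi^*\Mm$ is trivial as a $\pi_*\Oo_X$-module on each $V_i$, with transition $\omega_{ij}=\pi^\sharp(\mu_{ij})\in\Gamma(V_i\cap V_j,\pi_*\Oo_X^*)$ obtained by viewing $\mu_{ij}$ as a section of $\pi_*\Oo_X$ through the structure map. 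Applying the second identity in \eqref{normpr} with $s=1$ gives $\Nn_{Y/X}(\pi^\sharp(\mu_{ij}))=\mu_{ij}^n\cdot\Nn_{Y/X}(1)=\mu_{ij}^n$, so the cocycle defining $\Nm_\pi(\pi^*\Mm)$ is $\{\mu_{ij}^n\}$, which is precisely the cocycle of $\Mm^{\otimes n}$.

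The routine parts are the multiplicative identities at the level of cocycles; the main obstacle is the bookkeeping needed to identify, via the projection formula, the transition cocycle of $\pi_*\pi^*\Mm$ as a $\pi_*\Oo_X$-module with the image $\pi^\sharp(\mu_{ij})$ of the original cocycle of $\Mm$, rather than with some twist of it. Once this identification is made canonical, the homogeneity relation $\Nn_{Y/X}(\mu s)=\mu^n\Nn_{Y/X}(s)$ immediately yields the $n$-th power.
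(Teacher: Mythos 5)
Your proposal is correct and follows essentially the same route as the paper: the paper's proof is a one-line appeal to the identities \eqref{normpr} combined with the fact that tensor product of line bundles corresponds to multiplication of defining cocycles, which is precisely the cocycle bookkeeping you carry out in detail (including the projection-formula identification of the cocycle of $\pi_*\pi^*\Mm$ with $\pi^\sharp(\mu_{ij})$ and the computation $\Nn_{Y/X}(\pi^\sharp(\mu_{ij}))=\mu_{ij}^n$).
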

\begin{proof}
The inverse image for line bundles is a homomorphism since it commutes obviously with tensor products. The other results follow from (\ref{normpr}), using the fact that tensor product of line bundles corresponds to multiplication at the level of defining cocycles.
\end{proof}

\begin{prop}\label{DetFormula}
Let $\pi: X \rightarrow Y$ be a finite, flat morphism between curves. For any invertible $\Oo_X$-modules $\Ll$, we have
\begin{align}
\Nm_\pi(\Ll) \simeq \det(\pi_* \Ll) \otimes \det(\pi_* \Oo_X)^{-1}.
\end{align}
\end{prop}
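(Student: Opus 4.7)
The plan is to prove the isomorphism by comparing the transition cocycles of the three invertible $\Oo_Y$-modules on a sufficiently refined open cover. By Lemma \ref{TrivialOnSaturated}, I choose an affine cover $\{V_i\}_{i \in I}$ of $Y$ on which both $\pi_*\Oo_X$ and $\pi_*\Ll$ are simultaneously trivial as $\Oo_Y$-modules of rank $n$, and such that $\pi_*\Ll$ is trivial as $\pi_*\Oo_X$-module. This gives two families of local trivializations: $\Oo_Y$-linear isomorphisms $\mu_i: (\pi_*\Oo_X)_{|V_i} \xrightarrow{\sim} \Oo_Y^n{}_{|V_i}$, and $\pi_*\Oo_X$-linear isomorphisms $\lambda_i: (\pi_*\Ll)_{|V_i} \xrightarrow{\sim} (\pi_*\Oo_X)_{|V_i}$.

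Next I compute the three relevant cocycles with respect to this cover. The cocycle for $\det(\pi_*\Oo_X)$ is $\{\det(\mu_i \mu_j^{-1})\}$ by definition. The cocycle for the Norm is $\{\Nn_{Y/X}(\omega_{ij})\}$, with $\omega_{ij} = \lambda_i \lambda_j^{-1}$ viewed as multiplication by a section of $(\pi_*\Oo_X)^*$ on $V_i \cap V_j$. Using the composed trivializations $\mu_i \circ \lambda_i$ to view $\pi_*\Ll$ as a locally free $\Oo_Y$-module of rank $n$, the $\Oo_Y$-linear transition functions are $\mu_i \circ \omega_{ij} \circ \mu_j^{-1}$, so the cocycle for $\det(\pi_*\Ll)$ is $\{\det(\mu_i \,\omega_{ij}\, \mu_j^{-1})\}$.

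The key algebraic step is the factorization
\begin{align*}
\mu_i\, \omega_{ij}\, \mu_j^{-1} = (\mu_i \mu_j^{-1}) \cdot (\mu_j\, \omega_{ij}\, \mu_j^{-1}).
\end{align*}
Taking determinants and using invariance of the determinant under conjugation, I get $\det(\mu_i\,\omega_{ij}\,\mu_j^{-1}) = \det(\mu_i \mu_j^{-1}) \cdot \det(\omega_{ij})$. By the very definition of the norm of sheaves (Definition \ref{NormOfSheaves}), $\det(\omega_{ij})$ coincides with $\Nn_{Y/X}(\omega_{ij})$, since on $V_i \cap V_j$ the trivialization $\mu_j$ identifies multiplication by $\omega_{ij}$ on $\pi_*\Oo_X$ with the matrix whose determinant computes $\Nn_{Y/X}(\omega_{ij})$. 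This shows that the cocycle of $\det(\pi_*\Ll)$ is the product of the cocycles of $\det(\pi_*\Oo_X)$ and of $\Nm_\pi(\Ll)$, yielding the stated isomorphism.

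The only delicate point is checking that the section $\omega_{ij}$, which a priori encodes the $\pi_*\Oo_X$-linear transition of $\pi_*\Ll$, genuinely induces, under the identification $\mu_j$, the matrix whose determinant is $\Nn_{Y/X}(\omega_{ij})$; but this is exactly the content of $\Nn_{Y/X}$ being defined as the determinant of the multiplication map. Everything else is functorial and independent of the choices of $\mu_i$ and $\lambda_i$ up to passing to a finer cover, since different choices differ by coboundaries whose determinants also differ by coboundaries.
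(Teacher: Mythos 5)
Your proof is correct. The cocycle bookkeeping is consistent: with $\nu_i := \mu_i \circ \lambda_i$ as $\Oo_Y$-linear trivializations of $\pi_*\Ll$, the transition functions are $\nu_i \nu_j^{-1} = \mu_i\,\omega_{ij}\,\mu_j^{-1}$, your factorization through $(\mu_i\mu_j^{-1})\cdot(\mu_j\,\omega_{ij}\,\mu_j^{-1})$ is valid, and the identification of $\det(\mu_j\,\omega_{ij}\,\mu_j^{-1})$ with $\Nn_{Y/X}(\omega_{ij})$ is exactly the basis-independence of the determinant of the multiplication endomorphism, which is Definition \ref{NormOfSheaves}. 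The one point worth flagging is that the paper does not prove this statement at all: its ``proof'' is a citation of \cite[Corollary 3.12]{HP}. So your argument is genuinely different in the sense that it is self-contained, and it is essentially the standard argument behind the cited result — compare the transition cocycles of $\det(\pi_*\Ll)$, $\det(\pi_*\Oo_X)$ and $\Nm_\pi(\Ll)$ on a cover where everything trivializes simultaneously (which is precisely what Lemma \ref{TrivialOnSaturated} supplies). What your route buys is transparency about why the formula holds and why it is independent of the choices of $\mu_i$ and $\lambda_i$; what the citation buys the paper is brevity. Either is acceptable, but if the intent were to keep the paper self-contained, your proof is the one to include.
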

\begin{proof}
See \cite[Corollary 3.12]{HP}.
\end{proof}

As we show in the next proposition, the direct image for Cartier divisors and the Norm map for line bundles are related, as well as the inverse image maps. Recall that, on any curve $X$, the Picard group is canonically isomorphic to the group of Cartier divisors modulo the subgroup $\Prin(X)$ of principal divisors. This gives rise to a canonical quotient of groups: 
\begin{align*}
q_X: \CDiv(X) &\longrightarrow \CDiv(X)/\Prin(X) = \Pic(X) \\
D &\longmapsto \Ii_D.
\end{align*}
\begin{prop}
Let $\pi: X \rightarrow Y$ be a finite, flat morphism between curves. The direct and inverse image for Cartier divisors are compatible via the quotient maps $q_X$ and $q_Y$ respectively with the Norm and the inverse image map for line bundles; i.e. the following diagrams of groups are commutative: \[
\begin{tikzcd}
\CDiv(X) \arrow{r}{\pi_*} \arrow{d}{q_X}  & \CDiv(Y) \arrow{d}{q_Y} \\
\Pic(X) \arrow{r}{\Nm_\pi} & \Pic(Y)
\end{tikzcd}
\hspace{2em}
\begin{tikzcd}
\CDiv(Y) \arrow{r}{\pi^*} \arrow{d}{q_Y}  & \CDiv(X)
\arrow{d}{q_X} \\
\Pic(Y) \arrow{r}{\pi^*} & \Pic(X).
\end{tikzcd}
\]
\end{prop}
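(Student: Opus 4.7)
The two diagrams can be treated separately and both reduce to unwinding the constructions recalled in Definition/Lemma \ref{DirectInverseImageForCartDef} and the preceding Definition/Lemma. Throughout, I fix an affine cover $\{V_i\}_{i \in I}$ of $Y$ as in Lemma \ref{TrivialOnSaturated}, i.e.\ such that $\pi_*\Oo_X$ is free as an $\Oo_Y$-module on every $V_i$, and such that any given invertible $\pi_*\Oo_X$-module is trivialized on each $V_i$.

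For the left-hand diagram, let $D\in\CDiv(X)$ correspond to the invertible fractional ideal $\Ii_D\subseteq \Kk_X$. By Proposition 6.1.12 of \cite{EgaII}, $\pi_*\Ii_D$ is an invertible $\pi_*\Oo_X$-module, so after refining $\{V_i\}$ we may write $\pi_*\Ii_D\bigr|_{V_i}=h_i\cdot(\pi_*\Oo_X)\bigr|_{V_i}$ with $h_i\in\Gamma(V_i,\pi_*\Kk_X^*)$ a meromorphic regular section. By definition, $\pi_*(D)$ is then the Cartier divisor on $Y$ whose associated fractional ideal is locally generated by $\Nn_{Y/X}(h_i)$ (using the multiplicativity of the norm to dispose of the factorization $h_i=f_i/g_i$). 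On the other hand, the transition sections of $\pi_*\Ii_D$ as an invertible $\pi_*\Oo_X$-module are $\omega_{ij}=h_i h_j^{-1}\in\Gamma(V_i\cap V_j,\pi_*\Oo_X^*)$, and by the construction of the Norm of a line bundle, $\Nm_\pi(\Ii_D)$ is the invertible $\Oo_Y$-module with transition cocycle $\{\Nn_{Y/X}(\omega_{ij})\}$. The key computation is then
\[
\Nn_{Y/X}(\omega_{ij})=\Nn_{Y/X}(h_i)\cdot\Nn_{Y/X}(h_j)^{-1},
\]
which is precisely the transition cocycle of the invertible fractional ideal $\Ii_{\pi_*(D)}$. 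Hence $q_Y(\pi_*D)=\Nm_\pi(q_X D)$ in $\Pic(Y)$.

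For the right-hand diagram, let $M\in\CDiv(Y)$ with associated invertible fractional ideal $\Jj\subseteq\Kk_Y$ and local generators $u_i=s_i/t_i$ on $V_i$. By Definition/Lemma \ref{DirectInverseImageForCartDef}, $\pi^*(M)$ has associated fractional ideal on $U_i=\pi^{-1}(V_i)$ generated by $\pi^\sharp(s_i)/\pi^\sharp(t_i)=\pi^\sharp(u_i)$. Its transition functions on $U_i\cap U_j$ are therefore $\pi^\sharp(u_i)/\pi^\sharp(u_j)=\pi^\sharp(u_i/u_j)$, which coincide with the pullback along $\pi^\sharp$ of the transition functions $u_i/u_j$ of $\Jj$. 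But $\pi^*\Jj$ is by definition the line bundle on $X$ with exactly these transition functions, so $q_X(\pi^*M)=\pi^*(q_Y M)$.

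The only slightly delicate point, and thus the main obstacle to formalize carefully, is the choice of the local generators $h_i$ for $\pi_*\Ii_D$ as a $\pi_*\Oo_X$-module (as opposed to as an $\Oo_Y$-module): one must invoke Lemma \ref{TrivialOnSaturated} and Proposition 6.1.12 of \cite{EgaII} to ensure that such $h_i$ exist on a sufficiently fine cover, and check that the definition of $\pi_*(D)$ recalled above is consistent with this point of view (which is already implicit in \cite[\S21.5.5]{EgaIV4}). Once the cover is fixed, both diagrams commute by a direct cocycle computation using the multiplicativity of $\Nn_{Y/X}$ and the fact that $\pi^\sharp$ is a ring homomorphism.
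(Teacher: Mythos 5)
Your proposal is correct and follows essentially the same route as the paper, which simply appeals to the correspondence $D \leftrightarrow \Ii_D$, the definitions of $\pi_*$ and $\Nm_\pi$, and the relevant passages of EGA IV (21.4.2.1 and 21.4.5) for the inverse-image square; you have merely made explicit the cocycle computation (transition sections $\omega_{ij}=h_i h_j^{-1}$, multiplicativity of $\Nn_{Y/X}$, and $\pi^\sharp$ being a ring homomorphism) that those citations encapsulate.
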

\begin{proof}
The commutativity of the first diagram follows from correspondence between  $D$ and $\Ii_D$ and the definitions of $\pi_*$ and $\Nm_\pi$. The second diagram is commutative as consequence of \cite[Picture 21.4.2.1]{EgaIV4} together with \cite[Proposition 21.4.5]{EgaIV4}.
\end{proof}

\vspace{1em}

We are finally interested in reviewing the Norm map for families of line bundles.\footnote{For the sake of completeness, we note that a complete discussion regarding families of Cartier divisors and their direct and inverse images is done in \cite[§21.15]{EgaIV4}. } To study families of line bundles, we \textbf{assume that $X$ and $Y$ are projective curves over a base field $k$} and that $\pi: X \rightarrow Y$ is a finite, flat morphism of degree $n$ defined over $k$.

\begin{defi}\label{JacobianDef}
Let $d \in \Z$ be an integer number. The \textit{Jacobian scheme} of degree $d$ on $X$ is the algebraic scheme $J^d(X)$ representing the sheafification of the functor that associates to any $k$-scheme $T$ the set of isomorphism classes of line bundles of degree $d$ on $X \times_k T$. The union of the Jacobians of all degrees is denoted as $J(X)$.
\end{defi}

\begin{deflemma}\label{NormOnJacDef}
Let  $\pi: X \rightarrow Y$ be a finite, flat morphism of projective curves over a base field $k$. For any $k$-scheme $T$, the \textit{Norm map for line bundles associated to $\pi$} is defined  on the $T$-valued points of the Jacobian of $X$ as:
\begin{align*}
\Nm_\pi(T): \hspace{2em} J(X)(T) &\longrightarrow J(Y)(T) \\
\Ll &\longmapsto \det\left( \pi_{T,*} (\Ll) \right) \otimes \det\left( \pi_{T,*}\Oo_{X\times_k T} \right)^{-1}.
\end{align*}
where $\pi_T: X \times_k T \rightarrow Y \times_k T$ is induced by pullback from $\pi$.
\end{deflemma}
\begin{proof}
We need to check that the map is well defined. Since $\Ll$ is a line bundle on $X \times_k T$, its pushforward $\pi_{T,*} (\Ll)$ is a locally free sheaf of rank $n$ on $Y \times_k T$, as for $\pi_{T,*}\Oo_{X\times_k T}$. Taking determinants produces line bundles, so $\det\left( \pi_{T,*} (\Ll) \right) \otimes \det\left( \pi_{T,*}\Oo_{X\times_k T} \right)^{-1}$ is a well-defined line bundle on $Y \times_k T$. The whole construction is functorial, so the above definition gives a well-defined morphism of schemes \[
\Nm_\pi: \hspace{2em} J(X) \longrightarrow J(Y).
\]\end{proof}

\section{Review of Jacobian stacks}\label{SectionReviewJacobians}
In this section, we first review and compare different classes of coherent sheaves with well-defined rank and degree. The aim is to compare the different meanings for a (torsion-free) sheaf to be ``of rank 1''. In particular, moving from the simplest case of line bundles, we distinguish among generalized line bundles, torsion-free rank-1 sheaves and torsion-free sheaves of polarized rank 1. Then, we introduce and compare the corresponding moduli stacks, enlarging the Jacobian scheme.

\subsection{Rank and degree of coherent sheaves}

Let $X$ be any Noetherian scheme of pure dimension $1$. In this subsection, we recall and compare different classes of coherent sheaves with well-defined rank and degree. We start by recalling the following definitions.

\begin{defi}
The \textit{support} of $\Ff$ is the closed set \[
\Supp(\Ff) = \{ x \in X| \Ff_x \neq 0 \} \subseteq X.
\] The \textit{dimension} of $\Ff$ is the dimension of its support and is denoted $\dim(\Ff)$.
\end{defi}

\begin{defi}
The \textit{torsion subsheaf} of $\Ff$ is the maximal subsheaf  $T(\Ff) \subseteq \Ff$ of dimension 0. If $\Ff = T(\Ff)$, we say that $\Ff$ is a \textit{torsion sheaf}. If $T(\Ff)=0$ we say that $\Ff$ is \textit{torsion-free}.
\end{defi}
Equivalently, $\Ff$ is torsion-free if $\dim(\Ee)=1$ for any non-trivial coherent subsheaf $\Ee \subseteq \Ff$, i.e. $\Ff$ is pure of dimension 1. For any sheaf $\Ff$, the quotient sheaf $\Ff/T(\Ff)$ is either zero or torsion-free.

\vspace{1em}
A first class of coherent sheaves with well-defined rank, after vector bundles, is given by generalized vector bundles.
\begin{defi}
$\Ff$ is a \textit{generalized vector bundle} if it is torsion-free and there exists a positive integer $r$ such that $\Ff_\xi \simeq \Oo_\xi^{\oplus r}$ for any generic point $\xi$ of $X$, where $\Ff_\xi$ denotes the stalk of $\Ff$ at $\xi$. The integer $r$ is called the \textit{rank} of the generalized vector bundle $\Ff$. A generalized vector bundle of rank $1$ is also called a \textit{generalized line bundle}.
\end{defi}
Clearly, a vector bundle of rank $r$ is also a generalized vector bundle of the same rank.
\begin{rmk}
Generalized line bundles were introduced in \cite{BayEis} in order to study limit linear series on a ribbon. As pointed out in \cite{EisGreen}, generalized line bundles are strictly related to generalized divisors introduced indipendently by Hartshone in \cite{Har86} (followed by \cite{Har94} and \cite{Har07}). See also Section \ref{SectionReviewDivisors} for details. The generalization to any rank $r$ is due to \cite{Savarese}.
\end{rmk}

\vspace{1em}

We introduce now the notion of rank for any coherent sheaf. Denote with $X_1, \dots, X_s$ the irreducible components of $X$ and with $\xi_i$ the generic point of each irreducible component $X_i$. Denote with $C_i = X_{i,red}$ the reduced subscheme underlying $X_i$ for each $i$.
\begin{defi}
The \textit{multiplicity} of $C_i$ in $X$ is defined as the positive integer: \[
\mult_X(C_i) := \ell_{\Oo_{X, \xi_i}}(\Oo_{X, \xi_i}).
\]
where $\ell_{\Oo_{X, \xi_i}}$ denotes the length as $\Oo_{X, \xi_i}$-module.
\end{defi}

\begin{defi}\label{RankDef}(Rank and multirank of a coherent sheaf)
The \textit{rank} of $\Ff$ on $X_i$  is defined as the rational number \[
\rk_{X_i}(\Ff) = \frac{\ell_{\Oo_{X, \xi_i}}(\Ff_{\xi_i}) }{  \ell_{\Oo_{X, \xi_i}}(\Oo_{X, \xi_i}) }.
\]

The \textit{multirank} of $\Ff$ on $X$ is the $n$-uple $\rvect=(r_1, \dots, r_n)$ where $r_i$ is the rank  of $\Ff$ at $X_i$. The sheaf $\Ff$ has \textit{rank} $r$ on $X$ if it has rank $r$ at each $X_i$.
\end{defi}

Note that, since length of modules is additive in short exact sequences, the rank and multirank of coherent sheaves are additive in short exact sequences too.

\begin{prop}
A generalized vector bundle of rank $r$ on $X$ has rank $r$ as a coherent sheaf. If $X$ is reduced, any torsion-free sheaf of rank $r$ is a generalized vector bundle of the same rank.
\end{prop}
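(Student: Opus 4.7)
The proof splits naturally into the two implications stated.

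For the first part, let $\Ff$ be a generalized vector bundle of rank $r$. By definition $\Ff$ is torsion-free and $\Ff_{\xi_i} \simeq \Oo_{X,\xi_i}^{\oplus r}$ at each generic point $\xi_i$. The plan is to plug this isomorphism directly into the formula defining $\rk_{X_i}(\Ff)$ and exploit the additivity of length on direct sums: one has $\ell_{\Oo_{X,\xi_i}}(\Ff_{\xi_i}) = r\cdot \ell_{\Oo_{X,\xi_i}}(\Oo_{X,\xi_i})$, so that $\rk_{X_i}(\Ff) = r$ for every $i$. This immediately yields that the rank on $X$ is $r$.

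For the second part, assume $X$ reduced and let $\Ff$ be a torsion-free coherent sheaf of rank $r$. The key observation is that for a reduced scheme the local ring at each generic point $\xi_i$ is a field (it is the residue field of the irreducible component $X_i$, regarded with its reduced structure, which by reducedness coincides with $\Oo_{X,\xi_i}$). Consequently $\mult_X(C_i) = \ell_{\Oo_{X,\xi_i}}(\Oo_{X,\xi_i}) = 1$, and the defining formula for rank simplifies to $\rk_{X_i}(\Ff) = \ell_{\Oo_{X,\xi_i}}(\Ff_{\xi_i}) = \dim_{\Oo_{X,\xi_i}} \Ff_{\xi_i}$. Since by hypothesis this dimension equals $r$, the stalk $\Ff_{\xi_i}$ is a finite-dimensional vector space over the field $\Oo_{X,\xi_i}$, hence isomorphic to $\Oo_{X,\xi_i}^{\oplus r}$. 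Combined with the assumption that $\Ff$ is torsion-free, this shows $\Ff$ is a generalized vector bundle of rank $r$.

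There is no real obstacle here: both directions are essentially formal consequences of the definitions of rank and of generalized vector bundle, together with the fact that reducedness makes the local rings at generic points into fields. The only point worth emphasizing, so that the reader does not overlook it, is precisely this last fact, since it is what collapses the ratio of lengths in Definition \ref{RankDef} to the dimension of a vector space and forces the stalks to be free.
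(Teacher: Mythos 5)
Your proof is correct and follows essentially the same route as the paper: the first part is the direct length computation from Definition \ref{RankDef}, and the second rests on the stalk at each generic point being a free module of rank $r$. The only difference is cosmetic: where the paper invokes generic local freeness of a torsion-free sheaf on an integral curve, you observe directly that reducedness forces $\Oo_{X,\xi_i}$ to be a reduced zero-dimensional local ring, hence a field, so the stalk is automatically free; this is a slightly more self-contained justification of the same fact.
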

\begin{proof}
The first statement follows from \ref{RankDef} and the fact that isomorphic modules have the same length. For the second assertion, note that on an integral curve $X$ there exists for any torsion-free sheaf $\Ff$ an open dense subset $U \subset X$ such that $\Ff_{|U}$ is locally free, hence the stalk of $\Ff$ at the generic point of $X$ is a free $\Oo_{X, \xi}$-module of rank $\rk_X(\Ff)$. If $X$ is reducible, the assertion follows by considering any irreducible component.
\end{proof}

\begin{rmk}
There are in literature other notions for the rank of a coherent sheaf on a irreducible component. A classical notion is the \textit{reduced rank} of $\Ff$ on $X_i$: \[
\rr_{X_i}(\Ff) = \dim_{\kappa(\xi_i)}(\Ff_{\xi_i} \otimes_{\Oo_{X,\xi}} \kappa(\xi_i)).
\]
This definition computes the rank of the sheaf restricted to its reduced support; it agrees with the other definitions for generalized vector bundles. However, there are cases when the reduced rank of a torsion-free sheaf differs from its rank, as it happens for quasi-locally free sheaves on a ribbon (see \cite[§1.4]{Savarese}). They also provide examples of torsion-free sheaves which are not generalized vector bundles, even if their rank is well defined.

The definition of $\rk_H(\Ff)$ comes at least from \cite[Définition 1.2]{Schaub98}, where it is given in the context of projective $k$-schemes without embedded points. The notion of \textit{generalized rank} introduced in \cite{Drez04} and \cite{Savarese} is essentially equivalent. Finally, a common definition in projective algebraic geometry is the polarized rank, that we will discuss later on in the present section.
%used in \cite{ChLau} and \cite{DeCat17}. 
\end{rmk}

In order to introduce the degree of coherent sheaves, from now on we \textbf{ assume that $X$ is projective curve over a base field $k$}. Recall that the Euler characteristic of a coherent sheaf $\Ff$ is \[
\chi(\Ff) :=  \sum (-)^i \dim_k H^i(X, \Ff).
\]

\begin{defi}\label{DegreeDef}(Degree of a coherent sheaf)
Suppose that $\Ff$ has rank $r$ on $X$. Then, the \textit{degree} of $\Ff$ on $X$ is defined as the fractional number: \[
\deg \Ff = \chi(X, \Ff) - r \chi(X, \Oo_X).
\]
\end{defi}
\begin{rmk}
If the rank of $\Ff$ is integer, the degree is integer as well. If $X$ is integral, the degree of a sheaf is an integer and coincides with the classical notion of degree for sheaves on integral curves. 
\end{rmk}
%\begin{fact} NON COSì SICURO..
%The rank and degree of sheaves are additive in short exact sequences. For the rank, the statement follows %from the additivity of length of modules. For the degree, the statement follows from the additivity of %Euler characteristic of sheaves. 
%\end{fact}
The following technical lemma is very useful.

\begin{lemma}\label{ChiOfTensor}
Assume that $X$ is irreducible. Let $\Ff$ be a coherent sheaf of rank $r$ on $X$ and let $\Ee$ be a locally free sheaf of rank $n$ on $X$. Then \[
\chi(X, \Ff \otimes \Ee) = r\deg(\Ee) + n\chi(X, \Ff).
\]
\end{lemma}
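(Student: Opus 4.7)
My plan is to exploit the bilinearity of the claimed identity. Fix the locally free sheaf $\Ee$ of rank $n$. Both sides of the identity are additive in $\Ff$ on short exact sequences of coherent sheaves: the left-hand side because $\Ee$ is locally free (hence flat), so $-\otimes\Ee$ preserves exactness, combined with the additivity of $\chi$; the right-hand side because both the rank and the Euler characteristic are additive on short exact sequences, as recalled after Definition \ref{RankDef}. Consequently, it suffices to verify the identity for a family of coherent sheaves $\Ff$ whose classes generate the Grothendieck group of $X$ modulo the kernel of this additive function.

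I would then check two base cases. When $\Ff = \Oo_X$ one has $r = 1$ and the identity reduces to $\chi(\Ee) = \deg(\Ee) + n\chi(\Oo_X)$, which is exactly Definition \ref{DegreeDef}. When $\Ff$ is a torsion sheaf one has $r = 0$ and the identity says $\chi(\Ff \otimes \Ee) = n\chi(\Ff)$: this is immediate because $\Ff$ is supported at finitely many closed points, and in a neighbourhood of each such point $\Ee$ is free of rank $n$, so $\Ff \otimes \Ee$ is isomorphic to $\Ff^{\oplus n}$ locally near $\Supp(\Ff)$ and the Euler characteristics multiply by $n$.

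To pass from these base cases to an arbitrary coherent sheaf $\Ff$, I would first use the torsion/torsion-free exact sequence $0 \to T(\Ff) \to \Ff \to \Ff/T(\Ff) \to 0$ to reduce to the case of torsion-free $\Ff$. For a torsion-free sheaf $\Ff$ on the irreducible projective curve $X$, one can twist by a sufficiently positive line bundle $\Oo_X(m)$ to produce a nonzero global section of $\Ff(m)$ that does not vanish at the generic point, yielding an inclusion of a line bundle $L \hookrightarrow \Ff$ whose cokernel, after saturating, is torsion-free of strictly smaller rank. Iterating, one reduces to the case of a rank-$1$ torsion-free sheaf, which in turn fits in a short exact sequence whose other two terms involve a line bundle and a torsion sheaf, so one concludes by combining the base cases via additivity.

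The main obstacle is the reduction from a torsion-free $\Ff$ to the line-bundle case in sufficient generality, especially when $X$ is non-reduced, where $\Ff$ may have fractional rank and the construction of line subbundles requires further care. An alternative approach that bypasses this is to induct on the rank $n$ of $\Ee$ instead: additivity in $\Ee$ on short exact sequences of locally free sheaves reduces to the case $\Ee = \Oo_X(D)$ for some Cartier divisor, and one then writes $D = D_+ - D_-$ as a difference of effective Cartier divisors and uses the locally free resolution $0 \to \Oo_X(-D_\pm) \to \Oo_X \to \Oo_{D_\pm} \to 0$ to compute the resulting change in $\chi(\Ff \otimes -)$ via a local Tor calculation at each point of $D_\pm$, verifying that the contribution is precisely $r \deg(D_\pm)$.
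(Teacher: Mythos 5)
Your overall strategy (additivity in $\Ff$ on short exact sequences, then verification on generators) is exactly the devissage argument the paper uses, and your torsion base case is correct and matches the paper's treatment of skyscrapers. The gap is in the other base case and the reduction to it. You take $\Ff=\Oo_X$ as the generator in rank $>0$ and propose to reach a general torsion-free $\Ff$ by iteratively splitting off line subbundles $L\hookrightarrow\Ff$ with torsion-free cokernel of smaller rank, ending at rank-$1$ torsion-free sheaves that differ from a line bundle by torsion. This fails precisely in the case the lemma is designed to cover, namely $X$ irreducible but non-reduced (ribbons, non-reduced spectral curves): there a torsion-free sheaf of rank $1$ need not be generically free, and in general admits no injection from a line bundle with torsion cokernel. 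For instance, on a ribbon $X$ with reduction $C$, the sheaf $\Oo_C\oplus\Oo_C$ is torsion-free of rank $1$ (in the sense of Definition \ref{RankDef}), but any map $\Oo_{X,\xi}\to(\Oo_C\oplus\Oo_C)_\xi$ kills the nilradical and so is never injective; likewise a nonzero section of $\Ff(m)$ not vanishing at $\xi$ only yields an injection of a quotient $\Oo_X/I(-m)\hookrightarrow\Ff$ where $I$ may be the nilradical, i.e.\ an inclusion of a twist of $\Oo_C$, not of a line bundle on $X$. You flag this obstacle yourself but do not resolve it, and it is the heart of the proof. The paper's fix is to apply the devissage lemma [Stacks, Tag 01YI], whose base cases are the structure sheaves of the \emph{integral} closed subschemes of $X$ — the closed points and the reduced curve $C=X_{red}$ — so the correct ``big'' generator is $\Oo_C$ (of rank $1/\mult_X(C)$), for which the formula follows from $\Ee\otimes\Oo_C=\Ee_{|C}$ together with $\deg\Ee=\mult_X(C)\deg(\Ee_{|C})$; no line-subbundle filtration is ever needed.

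Your alternative route (inducting on $\Ee$ instead) has its own unproved steps: reducing a rank-$n$ locally free $\Ee$ to line bundles by exact sequences requires a filtration by line subbundles, which again needs an argument on a possibly non-reduced curve (and over a finite base field even in the reduced case), and the final ``local Tor calculation'' $\ell(\Ff/f\Ff)-\ell(\Ff[f])=\rk(\Ff)\cdot\ell(\Oo_{X,x}/f)$ is a genuine lemma on Herbrand-type quotients over one-dimensional local rings that you assert rather than prove. To repair your write-up with minimal change, replace the base case $\Oo_X$ by $\Oo_C$ and invoke the devissage lemma to obtain the two-out-of-three reduction, exactly as in the paper.
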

\begin{proof}
The proof is inspired by \cite[\href{https://stacks.math.columbia.edu/tag/0AYV}{Tag 0AYV}]{stacks-project} and uses devissage for coherent sheaves as stated in \cite[\href{https://stacks.math.columbia.edu/tag/01YI}{Tag 01YI}]{stacks-project}. Let $\mathcal{P}$ be the property of coherent sheaves $\Ff$ on $X$ expressing that the formula of the Lemma holds. By additivity of rank and Euler characteristic in short exact sequences, $\mathcal{P}$ satisfy the two-out-of-three property. The integral subschemes $Z$ of $X$ are the reduced subscheme $C$ with support equal to $X$, and the closed points of $X$. For $Z=C$, the formula of the Lemma for $\Oo_{C}$ is: \[
\chi(X, \Ee \otimes \Oo_{C}) =\frac{\deg (\Ee)}{\mult_X C} + n \chi(X, \Oo_{C}). \]
This is true by definition of degree of $\Ee \otimes \Oo_{C} = \Ee_{|C}$ on $C$ and the fact that $\deg(\Ee)=\mult_X (C)\deg(\Ee_{|C})$. Then $\mathcal{P}(\Oo_C)$ holds. If $i: Z \hookrightarrow X$ is a closed point, the formula of the Lemma is true for $i_*\Oo_Z$ since it is a torsion sheaf of rank 0 and $\chi(X, \Ee \otimes i_*\Oo_Z) = n\chi(X, i_*\Oo_Z)$. Then $\mathcal{P}(i_*\Oo_Z)$ holds.
\end{proof}

\vspace{1em}
Let $H$ be any polarization of $X$; then, the notion of polarized rank and degree of sheaves can be defined. Denote with $H_{|C_i}$ the restriction of $H$ to the closed subscheme $C_i \subseteq X$. Since $H$ is locally free of rank $1$, the degree of $H$ on $X$ is related to the degrees of each  $H_{|C_i}$ on $C_i$ by the formula: \[
\deg H = \sum_{i=1}^s \mult_X( C_i) \deg H_{|C_i}.
\]

\begin{defi}\label{PolarizedRankDef}(Polarized rank and degree of a torsion-free sheaf)
Let $H$ be any polarization of $X$ with degree 	$\deg H = \delta$ and let $\Ff$ be a torsion-free sheaf on $X$. The \textit{polarized rank and 	degree} of $\Ff$ are the rational numbers $r_H(\Ff)$ and $d_H(\Ff)$ determined by the Hilbert polinomial of $\Ff$ with respect to $H$: \[
P(\Ff, n, H) := \chi(\Ff \otimes \Oo_X(nH)) = \delta r_H(\Ff) n + d_H(\Ff) + r_H(\Ff) \chi(\Oo_X). 
\]
\end{defi}

The polarized rank and degree of a sheaf depend strictly on the degrees of the restrictions $H_{|C_i}$, as the following theorem shows.

\begin{thm}\label{PolarizedRankVsRank}
Let $H$ be a polarization of $X$ and let $\Ff$ be a torsion-free sheaf on $X$. The polarized rank of $\Ff$ is related to the multirank of $\Ff$ by the formula \[
r_H(\Ff)=\frac{\sum_{i=1}^s \rk_{X_i}(\Ff) \mult_X (C_i) \deg H_{|C_i} }{ \sum_{i=1}^s \mult_X( C_i) \deg H_{|C_i} }.
\]
\end{thm}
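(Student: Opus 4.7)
The plan is to compute the leading (linear) coefficient in $n$ of the Hilbert polynomial $P(\Ff,n,H) = \chi(\Ff \otimes \Oo_X(nH))$, which by Definition \ref{PolarizedRankDef} equals $\delta \cdot r_H(\Ff)$, and to identify it with $\sum_{i=1}^s \rk_{X_i}(\Ff)\mult_X(C_i)\deg H_{|C_i}$. The formula for $r_H(\Ff)$ then follows by dividing by $\delta = \deg H = \sum_i \mult_X(C_i) \deg H_{|C_i}$.

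Concretely, I would establish the identity
$$\chi(\Ff \otimes \Oo_X(nH)) - \chi(\Ff) = n \sum_{i=1}^s \rk_{X_i}(\Ff) \mult_X(C_i) \deg H_{|C_i}$$
for every coherent sheaf $\Ff$ on $X$, by a devissage argument modeled on the proof of Lemma \ref{ChiOfTensor}. Let $\mathcal{P}(\Ff)$ denote this identity. Since $\Oo_X(nH)$ is locally free of rank $1$, tensoring with it preserves short exact sequences; combined with the additivity of $\chi$ and of $\rk_{X_i}$ in short exact sequences, this gives the two-out-of-three property for $\mathcal{P}$. By \cite[\href{https://stacks.math.columbia.edu/tag/01YI}{Tag 01YI}]{stacks-project}, it suffices to verify $\mathcal{P}$ on the structure sheaves $i_*\Oo_Z$ of integral closed subschemes $Z \subseteq X$.

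The two resulting base cases are handled directly. If $Z$ is a closed point then $i_*\Oo_Z$ is a torsion sheaf, hence $i_*\Oo_Z \otimes \Oo_X(nH) \simeq i_*\Oo_Z$ and $\rk_{X_i}(i_*\Oo_Z) = 0$ for every $i$, so both sides of $\mathcal{P}$ vanish. If $Z = C_j$ is an integral component, then $\Oo_{C_j} \otimes \Oo_X(nH) = \Oo_X(nH)_{|C_j}$, and Riemann--Roch on the integral curve $C_j$ gives $\chi(\Oo_X(nH)_{|C_j}) = n\deg H_{|C_j} + \chi(\Oo_{C_j})$. The stalk $(\Oo_{C_j})_{\xi_i}$ vanishes for $i \neq j$ and equals $\kappa(\xi_j)$ for $i=j$, which has length $1$ as $\Oo_{X,\xi_j}$-module, so $\rk_{X_i}(\Oo_{C_j}) = \delta_{ij}/\mult_X(C_j)$; the right hand side of $\mathcal{P}$ therefore collapses to $n\deg H_{|C_j}$, matching the left hand side.

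The main technical point is the length computation $\rk_{X_j}(\Oo_{C_j}) = 1/\mult_X(C_j)$, which is exactly where the multiplicities $\mult_X(C_i)$ enter the final formula and make the devissage fit together across the reducible, possibly non-reduced curve $X$; everything else is formal manipulation of Euler characteristics. Once $\mathcal{P}(\Ff)$ is established for torsion-free $\Ff$, comparing the coefficient of $n$ with the leading coefficient $\delta \cdot r_H(\Ff)$ read off from Definition \ref{PolarizedRankDef} yields the asserted formula.
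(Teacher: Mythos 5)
Your argument is correct and rests on the same engine as the paper's proof --- identify the coefficient of $n$ in $\chi(\Ff\otimes\Oo_X(nH))$ by devissage, with the pivot being the length computation $\rk_{X_j}(\Oo_{C_j})=1/\mult_X(C_j)$ and Riemann--Roch on the integral curves $C_j$ --- but you organize it differently. The paper first reduces to the irreducible case via the exact sequence $0\to\Ff\to\bigoplus_i\Ff_{|X_i}\to T\to 0$ (checking that the cokernel $T$ is a torsion sheaf supported at the intersections of components, so that $\chi(T\otimes\Oo_X(nH))=\chi(T)$), and then invokes Lemma \ref{ChiOfTensor}, which is itself proved by devissage on an irreducible curve and is stated for an arbitrary locally free $\Ee$. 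You instead run a single devissage on the whole (reducible, possibly non-reduced) curve $X$, so that the integral closed subschemes contributing base cases are exactly the reduced components $C_j$ together with the closed points; both base cases check out as you compute them, and the two-out-of-three property is immediate from exactness of $-\otimes\Oo_X(nH)$ and additivity of $\chi$ and of each $\rk_{X_i}$. Your route is slightly more economical (no intermediate reduction, no separate lemma) and even yields the identity for all coherent sheaves rather than just torsion-free ones; what it gives up is the reusable, more general statement of Lemma \ref{ChiOfTensor} for a locally free sheaf of arbitrary rank. One cosmetic point: you should note (as the devissage lemma formally requires) that the property is inherited from $\Ff^{\oplus r}$ to $\Ff$, which is immediate here since both sides of your identity scale linearly under direct sums.
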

\begin{proof}
Since the Hilbert polynomial of $\Ff$ with respect to $H$ has degree $1$, its leading term can be computed in terms of Euler characteristic as \[
\chi(X, \Ff \otimes \Oo_X(nH))  - \chi(X, \Ff).
\]
Since $\delta=\sum_{i=1}^n \mult_X( C_i) \deg_{C_i}H $ is the degree of $H$ on $X$, by Definition \ref{PolarizedRankDef} we have to prove: \[
\chi(X, \Ff \otimes \Oo_X(nH))  - \chi(X, \Ff) = n \sum_{i=1}^s \rk_{X_i}(\Ff) \mult_X (C_i) \deg_{C_i}H.
\]
First, we reduce to the case of $X$ irreducible. Consider the exact sequence: \[
0 \rightarrow \Ff \rightarrow \bigoplus_i \Ff_{|X_i} \rightarrow T \rightarrow 0
\]
where $T$ is a torsion sheaf supported only at the intersections of the irreducible components. Tensoring by $nH$, we obtain another exact sequence: \[
0 \rightarrow \Ff  \otimes \Oo_X(nH)  \rightarrow \bigoplus_i (\Ff  \otimes \Oo_X(nH) )_{|X_i} \rightarrow T \otimes \Oo_X(nH) \rightarrow 0.
\]
By additivity of the Euler characteristic with respect to exact sequences we have:
\begin{align*}
\sum_i &\left( \chi(X_i, (\Ff \otimes \Oo_X(nH))_{|X_i})  - \chi(X_i, \Ff_{|X_i}) \right)=\\
&=\chi(X, \Ff \otimes \Oo_X(nH))  - \chi(X, \Ff )+\chi(X, T \otimes \Oo_X(nH))  - \chi(X,T ).
\end{align*}
Since $T$ has dimension 0, the characteristics $\chi(X, T \otimes \Oo_X(nH))$ and $\chi(X,T )$ are the same, hence: \[
\chi(X, \Ff \otimes \Oo_X(nH))  - \chi(X, \Ff) = \sum_i \left( \chi(X_i, (\Ff \otimes \Oo_X(nH))_{|X_i})  - \chi(X_i, \Ff_{|X_i}) \right).
\]
We are then reduced to prove the statement on each irreducible component $X_i$. In other words we have to prove that, if $X$ is a irreducible (possibly non-reduced) curve with reduced structure $C$, then: \[
\chi(X, \Ff \otimes \Oo_X(nH))  - \chi(X, \Ff) = n \deg_X (H) \rk_X (\Ff).
\]
This is exactly the content of Lemma \ref{ChiOfTensor}, with $\Ee=\Oo_X(nH)$.
\end{proof}

\begin{cor}
Let $\Ff$ be any torsion-free sheaf with well-defined rank. Then, its polarized rank $r_H(\Ff)$ and degree $d_H(\Ff)$ are equal respectively to $\rk_X(\Ff)$ and $\deg_X(\Ff)$ for any polarization $H$ on $X$. f $X$ is irreducible, the rank and degree of a sheaf coincide with the polarized counterparts, for any polarization.
\end{cor}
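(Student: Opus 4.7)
The proof is essentially a direct substitution using the two preceding results, so the plan is short. First I would handle the polarized rank: by Theorem \ref{PolarizedRankVsRank},
\[
r_H(\Ff)=\frac{\sum_{i=1}^s \rk_{X_i}(\Ff)\, \mult_X(C_i)\, \deg H_{|C_i}}{\sum_{i=1}^s \mult_X(C_i)\, \deg H_{|C_i}},
\]
and the assumption that $\Ff$ has well-defined rank means precisely that $\rk_{X_i}(\Ff)=\rk_X(\Ff)$ for every $i$. Pulling this common factor out of the numerator cancels the denominator, giving $r_H(\Ff)=\rk_X(\Ff)$ for any polarization $H$.

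Next, for the degree, I would evaluate the Hilbert polynomial of Definition \ref{PolarizedRankDef} at $n=0$:
\[
\chi(\Ff)=P(\Ff,0,H)=d_H(\Ff)+r_H(\Ff)\,\chi(\Oo_X).
\]
Rearranging and substituting $r_H(\Ff)=\rk_X(\Ff)$ from the previous step yields
\[
d_H(\Ff)=\chi(\Ff)-\rk_X(\Ff)\,\chi(\Oo_X),
\]
which matches Definition \ref{DegreeDef} verbatim, so $d_H(\Ff)=\deg_X(\Ff)$.

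Finally, for the second assertion, I would remark that when $X$ is irreducible the multirank reduces to a single rational number $\rk_{X_1}(\Ff)$, which is automatically well-defined for every torsion-free sheaf (it is simply $\ell_{\Oo_{X,\xi_1}}(\Ff_{\xi_1})/\ell_{\Oo_{X,\xi_1}}(\Oo_{X,\xi_1})$), so the first part of the corollary applies unconditionally, and the equalities $r_H(\Ff)=\rk_X(\Ff)$ and $d_H(\Ff)=\deg_X(\Ff)$ hold for any polarization. There is no real obstacle here: the entire content has been absorbed into Theorem \ref{PolarizedRankVsRank}, and the corollary is just the observation that the weighted average of equal numbers is that common number, together with the tautological relation between $\chi$, rank and degree.
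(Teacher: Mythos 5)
Your proposal is correct and follows essentially the same route as the paper: the rank statement is read off from Theorem \ref{PolarizedRankVsRank} with all $r_i$ equal, the degree statement comes from evaluating the Hilbert polynomial at $n=0$ and comparing with Definition \ref{DegreeDef}, and the irreducible case is the observation that the rank is automatically well-defined there. No gaps.
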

\begin{proof}
The statement about the rank follows from Theorem \ref{PolarizedRankVsRank}, where $r_i = r$ for each irreducible component $X_i$. To prove the statement about the degree, note that for any polarization $H$: \[
d_H(\Ff) = \chi(X, \Ff) - r_H(\Ff)\chi(X, \Oo_X).
\]
If $r_H(\Ff) = \rk(\Ff)$, then $d_H(\Ff)= \deg(\Ff)$ by Definition \ref{DegreeDef}. The last statement follows from the fact that the $\rk_X(\Ff)$ is well-defined for any torsion-free sheaf when $X$ has only one irreducible component.
\end{proof}

\begin{rmk}
For reducible curves, the definitions of polarized rank and degree are the most general. As pointed out in \cite[§2]{Lop}, being of polarized rank $1$ for a torsion-free sheaf does not ensure that the sheaf is supported on the whole curve. Moreover, there are cases of torsion-free sheaves of polarized rank $1$ whose polarized rank on the restrictions to the irreducible components is different from $1$ and not an integer, even for reduced curves.
\end{rmk}

\subsection{Moduli of torsion-free sheaves ``of rank 1''}\label{SectionModuliOfSheaves}
% tengo tutti gli schemi (luoghi semistabili) corrispondenti, perché il risultato sulla densità di GJ in Jbar è interessante!
In this subsection, we introduce the moduli spaces for generalized line bundles, for torsion-free rank-1 sheaves and for torsion-free sheaves with polarized rank 1, over a (possibly reducible, non-reduced) projective curve $X$, defined over a base field $k$, with irreducible components $X_1, \dots, X_s$.

\vspace{1em}
Since we are dealing with moduli spaces, we recall first the definition of slope and stability for torsion-free sheaves.

\begin{defi}
Let $H$ be a polarization on $\Ff$ and $\Ff$ be a torsion-free sheaf on $X$ with polarized rank $r_H(\Ff)$ and polarized degree $d_H(\Ff)$.

The \textit{slope} of $\Ff$ with respect to $H$ is defined as the rational number $\mu_H(\Ff)=d_H(\Ff)/r_H(\Ff)$.

A coherent sheaf $\Ff$ on $X$ is $H$-\textit{stable} (respectively $H$-\textit{semi-stable}) if it is torsion-free and for any proper subsheaf $\Ee \subset \Ff$ the equality $\mu_H(\Ee) < \mu_H(\Ff)$ holds (respectively $\leq$).
\end{defi}

We start from the last moduli space, which is the larger and contains all the others.

\begin{deflemma}\label{SimpsonJacobianDef}
Let $H$ be a polarization on $X$ and let $d \in  \Z$ be an integer number. The \textit{Simpson Jacobian stack} of degree $d$ on $X$ is the algebraic stack  $\Jtf(X,H,d)$  such that for any $k$-scheme $T$, $\Jtf(X,d)(T)$ is the groupoid of $T$-flat coherent sheaves on $X \times_k T$ whose fibers over $T$ are torsion-free sheaves  of polarized rank 1 and polarized degree $d$ on $X \simeq X \times_k \{t\}$ with respect to $H$.

The \textit{Simpson Jacobian} of degree $d$ is the projective scheme $\JtfSch(X,H,d)$ which is the good moduli space representing S-equivalence classes of $H$-semistable torsion-free sheaves of polarized rank 1 and polarized degree $d$ on $X$.

The union of the Simpson Jacobians of all degrees is denoted $\Jtf(X,H)$ (resp. $\JtfSch(X,H)$).
\end{deflemma}
\begin{proof}
The algebraicity of $\Jtf(X,H,d)$ follows from the algebraicity of the stack of coherent sheaves on $X$ (see \cite[2.1]{Lieb} and \cite[Theorem 7.20]{CMW}). The moduli space of semistable sheaves with fixed polarized rank and degree was constructed by Simpson in \cite{SimpI}.
\end{proof}

In the following proposition, we show that the multirank of sheaves splits $\Jtf(X,H,d)$ in unions of connected components.

\begin{lemma}\label{CoverForJtf}
Let $H$ be a fixed polarization on $X$. For any  $\rvect=(r_1, \dots, r_s) \in \mathbb{Q}_{\geq 0}^s$, let $W_\rvect \subseteq \Jtf(X,H,d)$ be the substack parametrizing torsion-free sheaves with multirank equal to $\rvect$. Then $\{W_\rvect | W_\rvect \neq \emptyset \}_{\rvect \in \mathbb{Q}_{\geq 0}^s}$ is a finite collection of pairwise disjoint substacks of $\Jtf(X,H,d)$ that covers the whole space. Moreover, each $W_\rvect$ is open and closed, hence union of connected components of $\Jtf(X,H,d)$.
\end{lemma}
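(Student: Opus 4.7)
The disjointness and coverage of $\{W_\rvect\}$ are immediate, and finiteness follows from Theorem \ref{PolarizedRankVsRank}: the constraint $\sum_i r_i \mult_X(C_i) \deg H_{|C_i} = \deg H$, combined with the fact that by Definition \ref{RankDef} each $r_i$ is of the form $k_i/\mult_X(C_i)$ for some integer $k_i \geq 0$, becomes a linear Diophantine equation $\sum_i k_i \deg H_{|C_i} = \deg H$ with positive integer coefficients, which has only finitely many solutions.

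The substantive part is to show that each $W_\rvect$ is clopen in $\Jtf(X,H,d)$. I would reduce this to the claim that the multirank is locally constant in any $T$-flat family: precisely, for any $T$-flat coherent sheaf $\mathcal{G}$ on $X \times_k T$ with torsion-free fibers of polarized rank $1$, the function $t \mapsto \rvect(\mathcal{G}_t)$ on $T$ should be locally constant. Granting this, $W_\rvect$ is cut out by a locally constant condition on every smooth atlas, hence is open and closed, and thus a union of connected components.

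For the local constancy, my plan is to use that, by flatness and cohomology-and-base-change, the function $t \mapsto \chi(X_t, \mathcal{G}_t \otimes L|_{X_t})$ is locally constant on $T$ for every line bundle $L$ on $X$. Applying Lemma \ref{ChiOfTensor} component-by-component, exactly as in the proof of Theorem \ref{PolarizedRankVsRank}, yields
\[
\chi(X, \mathcal{G}_t \otimes L) - \chi(X, \mathcal{G}_t) = \sum_{i=1}^{s} \rk_{X_i}(\mathcal{G}_t)\, \mult_X(C_i)\, \deg L_{|C_i}.
\]
Choosing $s$ line bundles $L_1, \dots, L_s$ on $X$ whose restricted-degree matrix $(\deg L_{j,|C_i})_{j,i}$ is nonsingular over $\mathbb{Q}$ then expresses each $\rk_{X_i}(\mathcal{G}_t)$ as a fixed rational linear combination of the locally constant functions on the left, establishing the local constancy of the multirank. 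The main (and essentially only) obstacle is the existence of such $L_j$: a concrete choice is $L_j = \Oo_X(p_j)$ with $p_j$ a smooth closed point of $X$ lying on $C_j$ and on no other component, which produces a diagonal matrix; if the required smooth points are not available over $k$, one passes to a finite separable extension and uses that the multirank is preserved under faithfully flat base change.
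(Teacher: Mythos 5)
Your proposal is correct in structure but takes a genuinely different route from the paper's in the key step. The paper proves only that each $W_\rvect$ is \emph{closed}: it combines upper semicontinuity of the lengths of the stalks at the generic points $\xi_i$ (so each $r_i(t)$ can only jump up under specialization) with the linear constraint $\sum_i r_i(t)\mult_X(C_i)\deg H_{|C_i}=\deg H$ coming from the fixed polarized rank, which forces the nonnegative jumps to vanish; openness is then deduced a posteriori from finiteness and pairwise disjointness of the cover. You instead prove directly that the multirank is locally constant in any flat family, recovering the $\rk_{X_i}(\mathcal{G}_t)$ from the locally constant functions $t\mapsto\chi(\mathcal{G}_t\otimes L_j)$ via a nonsingular linear system. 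This is stronger --- it never uses the polarized-rank-$1$ hypothesis, so it shows the multirank decomposes the whole stack of coherent sheaves into clopen pieces --- at the cost of having to produce the test bundles $L_j$. On that point your concrete choice does not work in the paper's setting: the curve is allowed to have non-reduced components, and if $\mult_X(C_j)>1$ then $X$ has \emph{no} smooth closed points lying on $C_j$, so $\Oo_X(p_j)$ is unavailable as written, and passing to a field extension does not help. The repair is cheap: pick a closed point $p_j\in C_j$ not on any other component; since $X$ is pure of dimension $1$ the local ring $\Oo_{X,p_j}$ has no embedded primes, so there is a non-zerodivisor $f$ vanishing at $p_j$, and $D_j=Z(f)$ is an effective Cartier divisor supported at $p_j$. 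Then $L_j=\Oo_X(D_j)$ satisfies $\deg L_{j|C_i}=0$ for $i\neq j$ and $\deg L_{j|C_j}>0$ (the image of $f$ in $\Oo_{C_j,p_j}$ is nonzero because $f$ avoids the minimal primes), giving the required diagonal nonsingular matrix. With that substitution your argument goes through; the finiteness and disjointness parts coincide with the paper's.
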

\begin{proof}
The multi-rank of a sheaf is well-defined, hence the substacks $W_\rvect$ are pairwise disjoint.

Let $\Ff$ be a sheaf with polarized rank $r_H$ equal to $1$. By Theorem \ref{PolarizedRankVsRank} its multirank $\rvect$ must satisfy: \[
1 = r_H(\Ff) = \frac{\sum_{i=1}^s r_i(t) \mult_X (C_i) \deg_{C_i}H }{ \sum_{i=1}^n \mult_X( C_i) \deg_{C_i}H }.
\]
Since each $r_i$ is a non-negative fraction with integer numerator and denominator equal to $\mult_X( C_i)$, the possible values for $\rvect$ form a finite subset of $\mathbb{Q}_{\geq 0}^s$, once that $H$ and $r_H$ are fixed. Then, only a finite number of $W_\rvect$ is non-empty, hence  $\{W_\rvect | W_\rvect \neq \emptyset \}$ is a finite collection.

We claim that each $W_\rvect$ is closed; since the collection is finite and the $W_\rvect$'s are pairwise disjoint, this implies that each $W_\rvect$ is open and closed, and hence union of connected components.

To prove that $W_\rvect$ is closed, let $T$ be any $k$-scheme and let $\Ff$  be any family of torsion-free sheaves of polarized rank $1$ on $X \times_k T \xrightarrow{\pi} T$. Suppose that $\Ff$ has generically multirank $\rvect$ on $X$, meaning that $\rk_{X_i} (\Ff_{|\pi^{-1}(\eta)}) = r_i$ for each irreducible component $X_i$ of $X$ and any generic point $\eta$  of $T$.
Let $t$ be any point in the closure of $\{\eta\}$. Since the length is upper-semicontinuous, the rank of $\Ff_{|\pi^{-1}(t)}$ at each $X_i$ is a rational number $r_i(t)$ greater or equal than $r_i$. On the other hand, $\Ff_{|\pi^{-1}(t)}$ have polarized rank $1$; hence, thanks to Theorem \ref{PolarizedRankVsRank}, we can write: \begin{align*}
1 = r_H(\Ff) &= \frac{\sum_{i=1}^s r_i(t) \mult_X (C_i) \deg_{C_i}H }{ \sum_{i=1}^n \mult_X( C_i) \deg_{C_i}H } \geq \\
&\geq \frac{\sum_{i=1}^s  r_i \mult_X (C_i) \deg_{C_i}H }{ \sum_{i=1}^n \mult_X( C_i) \deg_{C_i}H }  = 1.
\end{align*}
By difference we obtain: \[
\frac{\sum_{i=1}^s (r_i(t)-r_i) \mult_X (C_i) \deg_{C_i}H }{ \sum_{i=1}^n \mult_X( C_i) \deg_{C_i}H } = 0.
\]
Since all the terms of the expression are non-negative, the only possibility is that $r_i(t)=r_i$ for each $i$. We conclude that the whole family $\Ff$ belongs to $W_\rvect$. 
\end{proof}

We come now to torsion-free sheaves with well-defined rank 1 on $X$.
\begin{deflemma}\label{CompJacobianDef}
Let $d \in \Z$ be an integer number. The \textit{compactified Jacobian stack} of degree $d$ on $X$ is the algebraic stack $\Jbar(X,d)$ such that for any $k$-scheme $T$, $\Jbar(X,d)(T)$ is the groupoid of $T$-flat coherent sheaves on $X \times_k T$ whose fibers over $T$ are torsion-free sheaves of rank 1 and degree $d$ on $X \simeq X \times_k \{t\}$.

Let $H$ be a polarization on $X$. The \textit{compactified Jacobian scheme} of degree $d$ is the good moduli space $\JbarSch(X,H,d)$ representing S-equivalence classes of $H$-semistable torsion-free sheaves of rank 1 and degree $d$ on $X$.

The union of the compactified Jacobians of all degrees is denoted $\Jbar(X)$ (resp. $\JbarSch(X,H)$).
\end{deflemma}
\begin{proof}
The property of having rank $1$ on $X$ implies that the polarized rank is equal to $1$ for any polarization; moreover, the degree is well defined and equal to the polarized degree for any polarization. Fix any polarization $H$ on $X$. Then, $\Jbar(X,d)$ can be seen as the open and closed substack $W_{(1, \dots, 1)}$ of $\Jtf(X, H, d)$ (Lemma \ref{CoverForJtf}), and hence is algebraic. The same argument holds for $\JbarSch(X,H,d)$, with the difference that the notion of semi-stability strictly depends on the choice of $H$.
\end{proof}

The following proposition shows that $\JbarSch(X,H,d)$ is actually a compactification of $J^d(X)$.

\begin{prop}
For any polarization $H$ on $X$, $\JbarSch(X,H,d)$ is a projective scheme,  union of connected components of $\JtfSch(X,H,d)$, and contains $J^d(X)$ as an open subscheme.
\end{prop}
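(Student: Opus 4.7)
The plan is to deduce all three assertions from Lemma \ref{CoverForJtf} applied with $\rvect=(1,\dots,1)$. By Definition \ref{RankDef}, a torsion-free sheaf on $X$ has well-defined rank $1$ precisely when its multirank is $(1,\dots,1)$, so the substack $W_{(1,\dots,1)}$ from that lemma coincides with $\Jbar(X,d)$. The lemma states that $W_{(1,\dots,1)}$ is open and closed in $\Jtf(X,H,d)$, so after restricting to the $H$-semistable locus and passing to good moduli spaces, $\JbarSch(X,H,d)$ realizes itself as an open and closed subscheme of $\JtfSch(X,H,d)$. This immediately gives the ``union of connected components'' claim; projectivity then follows because Simpson's construction \cite{SimpI} produces $\JtfSch(X,H,d)$ as a projective scheme, and any open and closed subscheme of a projective scheme is projective.

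For the inclusion $J^d(X)\hookrightarrow \JbarSch(X,H,d)$, I would first observe that every line bundle is in particular a torsion-free sheaf of rank $1$, so it naturally defines a point of $\Jbar(X,d)$. One then needs the resulting map to land in the semistable locus: for a line bundle $\Ff$ and any proper coherent subsheaf $\Ee\subset\Ff$, the quotient $\Ff/\Ee$ is a nonzero torsion (hence $0$-dimensional) sheaf with strictly positive Euler characteristic; combined with the control on the polarized rank of $\Ee$ via Theorem \ref{PolarizedRankVsRank}, this forces $\mu_H(\Ee)<\mu_H(\Ff)$. For the openness, I would use the classical fact that the locally-free locus of a flat family of coherent sheaves is open on the base, which is a direct consequence of the Fitting-ideal criterion recalled in Section \ref{SectionReviewFitting}; the openness descends from the stack $\Jbar(X,d)$ to the coarse moduli $\JbarSch(X,H,d)$ because line bundles are stable, and the good moduli map is an isomorphism on the stable locus modulo the scalar automorphism gerbe.

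The main obstacle I anticipate is the stability verification for line bundles on a reducible, possibly non-reduced $X$: a proper subsheaf of a line bundle may a priori drop multirank on some irreducible component, so the slope inequality is not an immediate consequence of $\Ff/\Ee$ being $0$-dimensional. Handling this cleanly requires a short case analysis via Theorem \ref{PolarizedRankVsRank}, balancing the multirank contributions against the component-wise polarized degrees, and possibly restricting the allowed polarizations $H$ so that no subsheaf obtained by dropping a component can destabilize a line bundle.
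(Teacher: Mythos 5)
Your handling of the first two assertions matches the paper's proof: you identify $\Jbar(X,d)$ with the piece $W_{(1,\dots,1)}$ of Lemma \ref{CoverForJtf}, pass to the semistable locus and the good moduli space to exhibit $\JbarSch(X,H,d)$ as an open and closed subscheme of the projective scheme $\JtfSch(X,H,d)$, and conclude projectivity and the ``union of connected components'' claim. That part is correct and is essentially the argument in the paper.

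The gap is in the third assertion. Your opening claim --- that for a line bundle $\Ff$ every proper coherent subsheaf $\Ee\subset\Ff$ has $0$-dimensional quotient --- is false on a reducible curve: the kernel of the restriction $\Ff\to\Ff_{|X_2}$ to a component is a proper subsheaf whose quotient is $1$-dimensional and whose multirank drops to $0$ on $X_2$. You flag this yourself in the last paragraph, but you never close the argument: the ``short case analysis'' via Theorem \ref{PolarizedRankVsRank} is not carried out, and the fallback of restricting the allowed polarizations would change the statement, which quantifies over \emph{any} $H$. The difficulty you have located is genuine: on a reducible curve a line bundle of total degree $d$ with sufficiently unbalanced multidegree fails $H$-semistability for a fixed $H$ (this is exactly the phenomenon that makes compactified Jacobians of reducible curves depend on the polarization), so the slope inequality cannot be deduced from purity of the quotient alone. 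For what it is worth, the paper's own proof dismisses this point in one clause (``locally free \dots implies both stability and rank 1'') with no more justification than you give; your write-up is more honest about where the problem sits, but as written it does not establish that every degree-$d$ line bundle lands in the semistable locus, so the open immersion $J^d(X)\hookrightarrow\JbarSch(X,H,d)$ is not proved. The openness argument itself (Fitting-ideal criterion for the locally free locus, and the identification of the good moduli space with the stack on the stable locus up to the scalar gerbe) is fine.
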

\begin{proof}
Since $\Jbar(X,d)$ is an open and closed substack of $\Jtf(X,H,d)$, then $\JbarSch(X,H,d)$ is an open and closed subscheme of $\JtfSch(X, H, d)$ and union of connected components. It is non-empty, as it contains the structure sheaf $\Oo_X$. Moreover $\JtfSch(X,H,d)$ is projective, hence $\JbarSch(X,H,d)$ is projective too. Finally, the condition of being locally free is open and implies both stability and rank 1; hence, $J^d(X)$ is an open subscheme of $\JbarSch(X,H,d)$.
\end{proof}

We come now to the last moduli space, which parametrizes generalized line bundles.

\begin{deflemma}\label{GenJacobianDef}
Let $d \in \Z$ be an integer number. The \textit{generalized Jacobian stack} of degree $d$ on $X$ is the algebraic stack $\Jgen(X,d)$ such that, for any $k$-scheme $T$, $\Jgen(X,d)(T)$ is the groupoid of $T$-flat coherent sheaves on $X \times_k T$ whose fibers over $T$ are generalized line bundles of degree $d$ on $X \simeq X \times_k \{t\} $.

Let $H$ be a polarization on $X$. The \textit{generalized Jacobian scheme} of degree $d$ is the good moduli space $\JgenSch(X,H,d)$ representing S-equivalence classes of $H$-semistable generalized line bundles of degree $d$ on $X$.

The union of the generalized Jacobians of all degrees is denoted $\Jgen(X)$ (resp. $\JgenSch(X,H)$).
\end{deflemma}
\begin{proof}
Generalized line bundles are torsion-free sheaves of rank 1; moreover, the condition of being generically locally free is open, hence $\Jgen(X,d)$ can be seen as an open substack of $\Jbar(X,d)$. In particular, $\Jgen(X,d)$ is an algebraic stack. Similarly, $\JgenSch(X,H,d)$ is an open subscheme of $\JbarSch(X,H,d)$.
\end{proof}

\vspace{1em}

To sum up, we defined a number of moduli spaces satisfying the following chain of inclusions: \[
J^d(X) \stackrel{(1)}{\subseteq} \Jgen(X,d) \stackrel{(2)}{\subseteq} \Jbar(X,d) \stackrel{(3)}{\subseteq} \Jtf(X,H,d),
\]
and: \[
J^d(X) \stackrel{(1')}{\subseteq} \JgenSch(X,H,d) \stackrel{(2')}{\subseteq} \JbarSch(X,H,d) \stackrel{(3')}{\subseteq} \JtfSch(X,H,d).
\]
\begin{rmk}\label{JacobianInclusions}
Inclusions $(1)$ and $(2)$ (resp. $(1')$ and $(2')$) above are open embeddings; inclusion $(3)$ (resp. $(3')$) is an open and closed embedding. In general they are strict and not dense, in the sense that for any of them there exists a (possibly non-reduced or reducible) curve $X$ such that the closure of the smaller space in the bigger one is contained strictly. Anyway, when $X$ satisfies additional conditions, some of the inclusions above are actually equalities.
\begin{itemize}
	\item If $X$ is irreducible, inclusion $(3)$ (resp. $(3)'$) is an equality.
	\item If $X$ is reduced, inclusion $(2)$ (resp. $(2')$) is an equality.
	\item  If $X$ is integral and smooth, inclusions $(1)$, $(2)$ and $(3)$ (resp. $(1')$, $(2')$ and $(3')$) are equalities.
\end{itemize}
\end{rmk}

\section{Review of generalized divisors}\label{SectionReviewDivisors}

In this section, we review R. Hartshorne's theory  of generalized divisors in the case of curves, in connection with the theory of torsion-free sheaves presented in Section \ref{SectionReviewJacobians}. We show that generalized divisors with given degree $d$, up to linear equivalence, correspond to generalized line bundles of degree $-d$; this fact is reflected in the definition of the Abel map given at the end of this section, comparing the moduli space of effective generalized divisors with the generalized Jacobian.

\begin{rmk}
The theory of generalized divisors is developed by Hartshorne in his papers \cite{Har86}, \cite{Har94} and \cite{Har07}, in order to generalize the notion of Cartier divisor on schemes satisfying condition $S_2$ of Serre. Since we are dealing with schemes of dimension 1, the condition $S_2$ of Serre coincides with the condition $S_1$, which in turn coincides with the fact of not having embedded components (i.e. embedded points). In particular, any scheme of pure dimension 1 is also $S_1$, and hence $S_2$. Similarly, a coherent sheaf on a curve satisfies condition $S_2$ if and only if it is torsion-free.
\end{rmk}

\subsection{Generalized divisors}

\begin{defi}
Let $X$ be a curve and let $\Kk_X$ be the sheaf of total quotient rings on $X$. A \textit{generalized divisor} on $X$ is a nondegenerate fractional ideal of $\Oo_X$-modules, i.e. a subsheaf $\Ii \subseteq \Kk_X$ that is a coherent sheaf of $\Oo_X$-modules and such that $\Ii_\eta = \Kk_{X, \eta}$ for any generic point $\eta \in X$. It is \textit{effective} if $\Ii \subseteq \Oo_X$. It is \textit{Cartier} if $\Ii$ is an invertible $\Oo_X$-module, or equivalently locally principal. It is  \textit{principal} if $\Ii = \Oo_X \cdot f$ (also denoted $(f)$) for some global section $f \in \Gamma(X, \Kk^*_X)$. 
\end{defi}

The set of generalized divisors on $X$ is denoted with $\GDiv(X)$, the subset of Cartier divisors with $\CDiv(X)$ and the set of principal divisors with $\Prin(X)$. 

Using the usual notion of subscheme associated to a sheaf of ideals, the set $\GDiv^+(X)$ of effective generalized divisors on $X$ is in one-to-one correspondence with the set of closed subschemes $D \subset X$ of pure codimension one (i.e. of dimension zero). With a slight abuse of notation, also for non-effective divisors, we denote with $D$ the generalized divisor and we refer to $\Ii$ (or $\Ii_D$) as the \textit{fractional ideal} of $D$ (also called \textit{defining ideal} of $D$, or \textit{ideal sheaf} of $D$ if $D$ is effective).

\vspace{1em} %SOMMA, INVERSO

Let $D_1, D_2$ be generalized divisors on $X$, with fractional ideals $\Ii_1, \Ii_2$. The \textit{sum} $D_1 + D_2$ is the generalized divisor associated to the fractional ideal $\Ii_1 \cdot \Ii_2 \subseteq \Kk_X $. The sum is commutative, associative, with neutral element $0$ defined by the trivial ideal $\Oo_X$. The \textit{inverse} of a generalized divisor $D$ associated to $\Ii$ is the generalized divisor $-D$ associated to the fractional ideal $\Ii^{-1}$, i.e. the sheaf which on any open subset $U$ of $X$ is defined as  $\{ f \in \Gamma(U,\Kk_X) \hspace{.3em} | \hspace{.3em} \Ii(U) \cdot f \subseteq \Oo_X(U) \}$.

The inverse operation behaves well with the sum only for Cartier divisors. For any pair of generalized divisors $D,E$ on $X$ with $E$ Cartier, $-(D+E)= (-D)+(-E)$, but $D + (-D)=0$ if and only if $D$ is a Cartier divisor. As a consequence, the set $\GDiv(X)$ of all generalized divisors over $X$ endowed with the sum operation is not a group, but the subset $\CDiv(X)$ of Cartier divisors is. The set $\Prin(X)$ of principal divisors is a subgroup of $\CDiv(X)$ and both the groups act by addition on the set $\GDiv(X)$. 

Tthe following Lemma, inspired by \cite[Proposition 2.11]{Har94}, shows that any generalized divisor is equal to the sum of an effective generalized divisor and the inverse of an effective Cartier divisor, as 

%%%OSS: IN HARTSHORNE VALE SOLO NEL CASO PROIETTIVO
%%%QUI SONO SU CURVA QUINDI SCHEMI 0-DIMENSIONALI CHIUSI NEGLI APERTI AFFINI
%%%SONO CHIUSI ANCHE NELLA CURVA X
\begin{lemma}\label{DifferenceOfEffective}
Let $X$ be a curve and let $D \in \GDiv(X)$ be any generalized divisor on $X$. Then, there exist an effective generalized divisor $D'$ and an effective Cartier divisor $E$ on $X$ such that $D=D'+(-E)$.
\end{lemma}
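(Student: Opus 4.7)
The plan is to translate the statement into fractional-ideal language. Writing $D = D' + (-E)$ with $D'$ effective generalized and $E$ effective Cartier is equivalent to producing an invertible ideal sheaf $\Ii_E \subseteq \Oo_X$ with $\Ii_E \cdot \Ii_D \subseteq \Oo_X$: the product will then automatically equal $\Ii_{D'}$ for the effective generalized divisor $D' := D + E$. So the whole problem reduces to building an effective Cartier divisor $E$ that ``clears the denominators'' of $\Ii_D$ globally.

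To locate candidates for $E$, I would introduce the coherent ideal sheaf
\[
\Jj := \Hhom_{\Oo_X}(\Ii_D, \Oo_X) \cap \Oo_X \subseteq \Oo_X,
\]
consisting of local sections of $\Oo_X$ that multiply $\Ii_D$ back into $\Oo_X$ inside $\Kk_X$. Since $\Ii_D$ is nondegenerate, at each generic point $\eta$ one has $\Ii_{D,\eta} = \Kk_{X,\eta}$ and hence $\Jj_\eta = \Oo_{X,\eta}$; by the pure-dimension-$1$ hypothesis, the vanishing locus $V(\Jj)$ is therefore a finite set of closed points $\{x_1, \dots, x_N\}$. Moreover each stalk $\Jj_{x_i}$ contains a non-zero-divisor, obtained as the product of the regular denominators of any finite system of generators of $\Ii_D$ on a suitable affine open around $x_i$.

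The substantive step is then to manufacture an invertible subsheaf $\Ii_E \subseteq \Jj$ by local-to-global patching. For each $i$ I would pick a non-zero-divisor $g_i \in \Jj_{x_i}$ and extend it to a section $\tilde g_i \in \Gamma(V_i, \Jj)$ on an affine neighborhood $V_i$ of $x_i$. Shrinking $V_i$ to the open subset $U_i$ obtained by removing the finitely many closed points consisting of the extra zeros of $\tilde g_i$ together with the other $x_j$'s, I can arrange simultaneously that $U_i$ contains no $x_j$ with $j \neq i$ and that $\tilde g_i$ is nowhere vanishing on $U_i \setminus \{x_i\}$. The prescriptions $\Ii_E|_{U_i} := (\tilde g_i)$ and $\Ii_E|_{X \setminus \{x_1, \dots, x_N\}} := \Oo_X$ are then compatible on all overlaps, because on $U_i \setminus \{x_i\}$ the section $\tilde g_i$ is a unit, so both prescriptions agree with $\Oo_X$ there. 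They glue to a locally principal ideal $\Ii_E \subseteq \Jj$, i.e.\ to an effective Cartier divisor $E$ with $\Ii_E \cdot \Ii_D \subseteq \Oo_X$, and $D' := D + E$ is the required effective generalized divisor.

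The main obstacle lies precisely in this patching: to make the local principal ideals $(\tilde g_i)$ fit with the trivial ideal on $X \setminus V(\Jj)$, one must cut each $\tilde g_i$ down until its zero locus is isolated at the single point $x_i$. The pure-dimension-$1$ hypothesis enters here essentially, since it forces the extra zeros of a non-zero-divisor to form a zero-dimensional closed subset that can be removed from $V_i$ without losing the neighborhood of $x_i$; all other steps are purely formal sheaf-theoretic manipulations.
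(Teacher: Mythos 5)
Your proof is correct and is essentially the same argument as the paper's (which follows Hartshorne's Proposition 2.11): locally clear the denominators of the finitely generated fractional ideal $\Ii_D$ by a single non-zero-divisor, and extend the resulting principal ideal to an effective Cartier divisor of all of $X$ by triviality outside its finite set of closed zeros. The only difference is organizational --- the paper takes a finite affine cover $\{U_i\}$ of the whole curve and sets $E=\sum_i Y_i$ with $Y_i$ the extension of the divisor of $f_i$ from $U_i$, whereas you localize at the finitely many closed points where $\Ii_D\not\subseteq\Oo_X$ and glue a single Cartier divisor; your write-up has the small merit of making explicit the extension-by-triviality step that the paper compresses into one line.
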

\begin{proof}
Cover $X$ by open affines $U_i=\Spec(A_i)$, $i=1, \dots, r$. For each $i$, denote with $I_i$ the fractional ideal of $D$ restricted to $U_i$. This is a finitely generated $A_i$-module, so there exists a nonzero-divisor $f_i \in A_i$ such that $f \cdot I_i \subseteq A$. Let $Y_i \subset U_i$ be the closed subscheme defined by $f_i$, which is an effective Cartier divisor of $U_i$; after the composition with $U_i \hookrightarrow X$, $Y_i$ becomes an effective Cartier divisor of $X$. Now, the sum of divisors $D+\sum_{i=1}^r Y_i$ is effective as it is sum of effective divisors on each $U_i$. By putting $E= \sum_{i=1}^r Y_i$ and $D'=D+E$ we get the result.
\end{proof}

\vspace{1em} %EQUIVALENZA LINEARE

Two generalized divisors $D_1, D_2$ over $X$ are \textit{linearly equivalent} (written $D_1 \sim D_2$) if there is a divisor $(f) \in \Prin(X)$ such that $D_1 + (f) = D_2$. We define the \textit{generalized Picard of X}  as the set of divisors on $X$ modulo linear equivalence: \[
\GPic(X) = \GDiv(X)/\Prin(X).
\]  
We have the following commutative diagram of sets, where the vertical maps are quotients by $\Prin(X)$:
\begin{center}
	\begin{tikzcd}[column sep=.7em]
	\CDiv(X) \arrow[twoheadrightarrow]{d} & \subseteq & \GDiv(X)\arrow[twoheadrightarrow]{d} \\
	\Pic(X)  & \subseteq & \GPic(X)
\end{tikzcd}

\end{center}
Taking inverse and sums preserve linear equivalence, so the two operations are well defined also on $\GPic(X)$ and the subset $\Pic(X) \subseteq \GPic(X) $ is a group that acts on $\GPic(X)$ by addition. For a curve $X$, the condition $\GDiv(X) = \CDiv(X)$ is also equivalent to $\GPic(X)= \Pic(X)$, which is also equivalent to the curve $X$ being smooth.

\vspace{1em}
As in the case of Cartier divisors, also the set $\GPic(X)$  has an alternative description in terms of abstract sheaves. First, we need a definition.

The set $\GPic(X)$  has an alternative description in terms of generalized line bundles. Let $D$ be a generalized divisor on $X$; then, its fractional ideal $\Ii$ is a generalized line bundle. If  $D'$ is another generalized line bundle, it is linearly equivalent to $D$ if and only if its fractional ideal $\Ii'$ is a generalized line bundle isomorphic to $X$ as an $\Oo_X$-module. Viceversa, any generalized line bundle $\Ff$ on $X$ is isomorphic to the fractional ideal of some generalized divisor $D$ \cite[Proposition 2.4]{Har07}. Then, \textit{$\GPic(X)$ can be also defined as the set of generalized line bundles of $X$, up to isomorphism}.
Under this description, classes of Cartier divisors correspond to isomorphism classes of line bundles, and the operations of sum by a Cartier divisor and inverse of a divisor are replaced with tensor product and taking dual of the corresponding sheaves.

\subsection{Degree of generalized divisors}
%GRADO

We now \textbf{assume that $X$ is a projective curve over a base field $k$}. In this case, the notion of \textit{degree of a divisor} can be introduced. %First, recall that for any closed point $x \in X$, the residue field of $x$ (denoted $k(x)$) is a finite extension of $k$. 

\begin{deflemma}\label{DegreeGenDivDef}
Let $D \in \GDiv^+(X)$ be an effective generalized divisor on $X$ with ideal sheaf $\Ii _D \subseteq \Oo_X$, and let $x \in X$ be any point of $X$ in codimension 1. We define the \textit{degree of $D$ at $x$} as the non-negative integer: \[
\deg_x(D) = \ell_{\Oo_{X,x}}\big( \Oo_{X,x}/\Ii_{D,x} \big)[\kappa(x):k].
\] 

The degree of any generalized divisor $D \in \GDiv(X)$ at $x$ is defined as $\deg_x(D)=\deg_x(E)-\deg_x(F)$, where $D=E-F$ with $E,F$ effective generalized divisors and $F$ Cartier by Lemma \ref{DifferenceOfEffective}. 

The degree of $D$ on $X$ (also denoted as $\deg(D)$ when there is no ambiguity) is equal to the sum of the degrees of $D$ at all points of $X$ in codimension 1: \[
\deg_X(D)= \sum_{x \textrm{ cod 1}} \deg_x(D).
\]
\end{deflemma}
\begin{proof}
First, note that the local ring $\Oo_{X,x}/\Ii_{D,x} $ in nonzero if and only if $x$ is in the support of $D$. In such case the local ring has Krull dimension 0, so it is Artinian and hence has finite length.

The definition of degree at a point of any generalized divisor is well-given thanks to \cite[Lemma 2.17]{Har94}, and $\deg_x(-D) = -\deg_x(D)$.

Finally, the supporty of $D$ contains only finitely many points, hence the degree of $D$ on $X$ is a well-defined integer.
\end{proof}

The degree of a generalized divisor is strictly related to the degree of its fractional ideal, as the following proposition shows.

\begin{prop}
Let $D$ be a generalized divisor on $X$ with fractional ideal $\Ii_D \subseteq \Kk_X$. Then, the degree of $\Ii_D$ as a torsion-free sheaf is well defined and equal to $-\deg_X(D)$. Moreover, linearly equivalent divisors have the same degree on $X$.
\end{prop}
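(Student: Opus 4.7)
The plan is to show that $\Ii_D$ is a rank-1 torsion-free sheaf, compute its degree in the effective case via a short exact sequence, and then reduce the general case to the effective one using Lemma \ref{DifferenceOfEffective}.

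First, I would observe that $\Ii_D$, being a nondegenerate fractional ideal, is a generalized line bundle and in particular has rank $1$ on every irreducible component of $X$. By Definition \ref{DegreeDef}, the degree
\[
\deg(\Ii_D) = \chi(X, \Ii_D) - \chi(X, \Oo_X)
\]
is therefore well-defined. Next, if $D$ is effective, the defining inclusion $\Ii_D \subseteq \Oo_X$ produces a short exact sequence
\[
0 \longrightarrow \Ii_D \longrightarrow \Oo_X \longrightarrow \Oo_D \longrightarrow 0,
\]
with $\Oo_D = \Oo_X/\Ii_D$. Since $D$ has pure dimension $0$, $\Oo_D$ is a torsion sheaf concentrated at the codimension-$1$ points in the support of $D$, so
\[
\chi(X, \Oo_D) = \sum_{x \text{ cod } 1} \ell_{\Oo_{X,x}}(\Oo_{X,x}/\Ii_{D,x})[\kappa(x):k] = \deg_X(D),
\]
using Definition/Lemma \ref{DegreeGenDivDef}. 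Additivity of Euler characteristic then yields $\deg(\Ii_D) = -\deg_X(D)$ in the effective case.

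For a general generalized divisor $D$, I would invoke Lemma \ref{DifferenceOfEffective} to write $D = D' - E$ with $D'$ effective generalized and $E$ effective Cartier; inside $\Kk_X$ one has $\Ii_D = \Ii_{D'} \cdot \Ii_E^{-1}$, and since $\Ii_E$ is invertible, this equals $\Ii_{D'} \otimes_{\Oo_X} \Ii_E^{-1}$ as $\Oo_X$-modules. The key ingredient is then the formula
\[
\chi(X, \Ff \otimes \Ll) = \chi(X, \Ff) + \rk(\Ff) \deg(\Ll)
\]
for a rank-$1$ torsion-free sheaf $\Ff$ and a line bundle $\Ll$; this is Lemma \ref{ChiOfTensor} when $X$ is irreducible, and in general follows by applying the irreducible case on each component via the exact sequence $0 \to \Ff \to \bigoplus_i \Ff_{|X_i} \to T \to 0$ (with $T$ a zero-dimensional torsion sheaf) used in the proof of Theorem \ref{PolarizedRankVsRank}. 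Combining this with the effective case applied to $D'$ and $E$ gives
\[
\deg(\Ii_D) = \deg(\Ii_{D'}) - \deg(\Ii_E) = -\deg_X(D') + \deg_X(E) = -\deg_X(D),
\]
which proves the first assertion.

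For the second assertion, if $D_1 \sim D_2$, then $D_2 = D_1 + (f)$ for some $f \in \Gamma(X, \Kk_X^*)$, and therefore $\Ii_{D_2} = f \cdot \Ii_{D_1}$ inside $\Kk_X$. Multiplication by $f$ is an isomorphism of $\Oo_X$-modules $\Ii_{D_1} \xrightarrow{\sim} \Ii_{D_2}$, hence $\deg(\Ii_{D_1}) = \deg(\Ii_{D_2})$, and the first part of the proposition gives $\deg_X(D_1) = \deg_X(D_2)$. The main technical point to handle carefully is the extension of Lemma \ref{ChiOfTensor} to reducible and non-reduced curves; everything else is bookkeeping from the short exact sequences.
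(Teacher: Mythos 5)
Your proof is correct. The effective case (the short exact sequence $0 \to \Ii_D \to \Oo_X \to \Oo_D \to 0$ plus the length computation) and the linear-equivalence argument are exactly the paper's. Where you diverge is the reduction of a general $D = D' - E$ to the effective case. The paper never invokes a tensor formula for degrees on a possibly reducible curve: it works with four short exact sequences (the effective sequences for $E$ and $F$ and their twists by $\Ii_F^{-1}$) and uses only the elementary fact that $\chi(\Ll \otimes T) = \chi(T)$ when $T$ has dimension $0$, which lets it express $\chi(\Ii_F^{-1})$ and then $\chi(\Ii_D)$ directly in terms of $\chi(\Ii_E)$, $\chi(\Ii_F)$ and $\chi(\Oo_X)$. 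You instead pass through the identity $\deg(\Ff \otimes \Ll) = \deg(\Ff) + \rk(\Ff)\deg(\Ll)$ for a rank-1 torsion-free $\Ff$ and a line bundle $\Ll$, which is Lemma \ref{ChiOfTensor} only for $X$ irreducible; you correctly flag that the reducible case needs the component-decomposition sequence $0 \to \Ff \to \bigoplus_i \Ff_{|X_i} \to T \to 0$, and that argument does go through (it is the same reduction used in the proof of Theorem \ref{PolarizedRankVsRank}, together with the additivity $\deg_X(\Ll) = \sum_i \deg_{X_i}(\Ll_{|X_i})$, which itself needs a one-line check via the decomposition sequences for $\Ll$ and $\Oo_X$). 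The trade-off: your route proves a more general and reusable statement (degree is additive under twisting by a line bundle) at the cost of this extra reducible-curve bookkeeping, whereas the paper's route is entirely self-contained and avoids any rank or degree considerations on individual components. Both are valid; just make sure the extension of Lemma \ref{ChiOfTensor} is actually written out rather than asserted, since it is the only non-formal step in your argument.
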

\begin{proof}
The fractional ideal $\Ii_D$ is a generalized line bundle, hence it has rank $1$ on $X$ and well-defined degree as a torsion-free sheaf. In order to compute $\deg(\Ii_D)$ consider the short exact sequence \[
0 \rightarrow \Ii_D \rightarrow \Oo_X \rightarrow \Oo_X/\Ii_D \rightarrow 0.
\]
By definition of degree, $\deg(\Ii_D) = \chi(\Ii_D) - \chi(\Oo_X)$. Then, by additivity of Euler characteristic with respect to exact sequences, we deduce that $\deg(\Ii_D) = - \chi(\Oo_X/\Ii_D)$. Since $\Oo_X/\Ii_D$ has dimension 0, its Euler characteristic is equal to $h^0(\Oo_X/\Ii_D):=\dim_k H^0(X, \Oo_X/\Ii_D)$. The support of $\Oo_D=\Oo_X/\Ii_D$ is by definition the support of $D$ and $H^0(X, \Oo_X/\Ii_D) = \oplus_{x \in \Supp D} \Oo_{D,x}$. Then we compute: \begin{align*}
\deg(\Ii_D) &=  -\chi(\Oo_X/\Ii_D)=-\sum_{x \in \Supp D} \dim_k \Oo_{D,x} = \\
&= -\sum_{x \in \Supp D} \dim_{\kappa(x)} \big( \Oo_{D,x} \big) [\kappa(x):k] = \\
&= -\sum_{x \in \Supp D} \ell_{\Oo_X,x} \big( \Oo_{D,x} \big) [\kappa(x):k] = \\
&= - \deg_X(D).
\end{align*}
Consider now a generalized divisor $D$ with fractional ideal $\Ii_D$ and let $D=E-F$ with $E,F$ effective and $F$ Cartier by Lemma \ref{DifferenceOfEffective}. Since $F$ is effective, consider first the short exact sequence: \begin{align}\label{DegreeExSeq1}
0 \rightarrow \Ii_F \rightarrow \Oo_X \rightarrow T \rightarrow 0
\end{align}
where $T$ is a sheaf of dimension 0. Since $F$ is Cartier, we can tensor by $\Ii_F^{-1}$ to obtain: \begin{align}\label{DegreeExSeq2}
0 \rightarrow \Oo_X \rightarrow \Ii_F^{-1} \rightarrow \Ii_F^{-1} \otimes T \rightarrow 0.
\end{align}
Using addivity of Euler characteristics in the exact sequences \ref{DegreeExSeq1} and \ref{DegreeExSeq2} and the fact that $\chi(\Ii_F^{-1} \otimes T)=\chi(T)$, we see that: \[
\chi(\Ii_F^{-1})-\chi(\Oo_X ) = \chi(\Oo_X)-\chi(\Ii_F).
\]
Also $E$ is effective, hence there is a short exact sequence:\begin{align}\label{DegreeExSeq3}
0 \rightarrow \Ii_E \rightarrow \Oo_X \rightarrow Q \rightarrow 0
\end{align} 
where $Q$ is a sheaf of dimension 0. Tensoring again by $\Ii_F^{-1}$ we obtain: \begin{align}\label{DegreeExSeq4}
0 \rightarrow \Ii_D \rightarrow \Ii_F^{-1} \rightarrow \Ii_F^{-1} \otimes Q \rightarrow 0
\end{align}
where $\Ii_D \simeq \Ii_E \otimes \Ii_F^{-1}$ since $D=E-F$ and $F$ is Cartier. Using addivity of Euler characteristics in the exact sequences \ref{DegreeExSeq3} and \ref{DegreeExSeq4} and the fact that $\chi(\Ii_F^{-1} \otimes Q)=\chi(Q)$, we can compute the degree of $\Ii_D$: \begin{align*}
\deg(\Ii_D) &= \chi(\Ii_D)-\chi(\Oo_X) = \\
&= \chi(\Ii_F^{-1})-\chi(\Ii_F^{-1} \otimes Q)-\chi(\Oo_X) = \\
&= \chi(\Ii_F^{-1})-\chi(Q)-\chi(\Oo_X) = \\
&= \chi(\Ii_F^{-1}) -2\chi(\Oo_X) + \chi(\Ii_E) = \\
&=\chi(\Oo_X) - \chi(\Ii_F) +\chi(\Ii_E) - \chi(\Oo_X).
\end{align*}
Since $E$ and $F$ are effective, we conclude that: \[
\deg(\Ii_D) = - (\deg_X(E)-\deg_X(F)) = -\deg_X(D).
\]
To prove the last statement, let $D$ and $D'$ be linearly equivalent divisors; then, their fractional ideal $\Ii_D$ and $\Ii_{D'}$ are isomorphic. Hence, \[
\deg_X(D) = - \deg(\Ii_D) = - \deg(\Ii_D') = \deg_X(D').
\]
\end{proof}

%\begin{rmk}
%If $X$ is a quasi-projective curve in $\Pp^n_k$, then any effective generalized divisor $D$ on $X$ is also a closed subscheme of $\Pp^n_k$,  of degree equal to $\deg(D)$. If the curve is also projective, linearly equivalent divisors have the same degree.
%\end{rmk}

\subsection{Moduli of generalized divisors and the Abel Map}
We start with considering the moduli space for generalized divisors. Since we are dealing with families of sheaves and related moduli problems, we \textbf{assume that $X$ is a projective curve over a base field $k$}.

\begin{rmk}
In general $\GDiv(X)$ cannot be represented by a geometric object of finite type, even for fixed degrees. Also, it is not easy to give a correct definition for flat families of non-effective generalized divisors. Hence, only families of effective divisors are considered, using the Hilbert scheme.
\end{rmk}

\begin{defi}\label{HilbertDef} The \textit{Hilbert scheme of effective generalized divisors of degree $d$ on $X$} is the Hilbert scheme $\Hilb_X^d$ parametrizing 0-dimensional subschemes of $X$, with Hilbert polynomial equal to a costant integer $d$. Recall that, given a $k$-scheme $T$, a $T$-valued point of $\Hilb_X^d$ is a $T$-flat subscheme $D \subset X \times_k T$ such that $D$ restricted to the fiber over any $t \in T$ is a 0-dimensional subscheme of $X \times_k \{t\} \simeq X$, of degree $d$. In other words, the corresponding ideal sheaf $\Ii_D \subset \Oo_{X\times_k T}$ restricted to any fiber over $T$ defines a  generalized divisor $D_t$ on $X$.

The \textit{Hilbert scheme of effective Cartier divisors of degree $d$} is the open subscheme $\lHilb_X^d \subseteq \Hilb_X^d$ parametrizing subschemes of $X$ of degree $d$ whose ideal sheaf is locally principal.
\end{defi}
Note that $\lHilb_X^d=\Hilb^d_X$ when $X$ is smooth.
%Note that $\Hilb_X^d$ is connected by a standard result of Hartshorne \cite{HarHilb}. Moreover, from the previous discussion

Families of effective generalized divisors can be added with families of Cartier divisors, giving rise to a morphism: 
\begin{align*}
\Hilb_X^d \times \lHilb_X^e &\longrightarrow \Hilb_X^{d+e}\\
(D,E) \hspace{2em} &\longmapsto \Zz(\Ii_D \cdot \Ii_E).
\end{align*}

Effective generalized divisors on $X$ up to linear equivalence are generalized line bundles, hence they are parametrized by the generalized Jacobian $\Jgen(X)$.

By Remark \ref{JacobianInclusions}, the Jacobian scheme $J(X)$ of $X$ is contained in $\Jgen(X)$ as an open subscheme. The operations of tensor products and taking inverse are well defined on $J(X)$ and make  $J(X)$ into an algebraic group, acting on $\Jgen(X)$ via tensor product: \begin{align*}
\Jgen(X) \times J(X) &\longrightarrow \Jgen(X) \\
(\Ll, \Ee) \hspace{1.5em}  &\longmapsto \Ll \otimes \Ee.
\end{align*}

\vspace{1em}

Finally, we recall the definition of the geometric Abel map. Let $\Hilb_X = \bigsqcup_{d \geq 0} \Hilb^d_X$ be the Hilbert scheme of effective generalized divisor of any degree on $X$.
\begin{defi}\label{TwistedAbel}
Let $M$ be a line bundle of degree $e$ on $X$. The \textit{($M$-twisted) Abel map} of degree $d$ is defined as:
\begin{align*}
\A_M^d: \hspace{2em} \Hilb_X^d &\longrightarrow \Jgen(X,-d+e) \\
D &\longmapsto \Ii_D \otimes M.
\end{align*}
The $M$-twisted Abel map in any degree is defined similary as a map: \[
\A_M: \hspace{1em} \Hilb_X \longrightarrow \Jgen(X)
\]
\end{defi}

The restriction of $\A_M^d$ to $\lHilb_X^d $ takes values in $J(X,-d+e)$. Taking care of the twisting, the Abel map is equivariant with respect to the sum of effective Cartier divisors: for any pair of line bundles $M$ and $N$ on $X$ and for any $D \in \Hilb_X$, $E \in \lHilb_X$, \[\A_{M\otimes N}(D+E) \simeq \A_M(D) \otimes \A_N(E). \]

\section{Review of the Fitting ideal}\label{SectionReviewFitting}
A fundamental tool for the definition of the direct image for generalized divisors is the \textit{Fitting ideal of a module} (and \textit{of a sheaf of modules}), of which we recall here the definition. See \cite[Chapter 20]{Eis},  \cite[Chapter 2.4]{Vas}  and \cite[Chapter V.1.3]{EisHar} for a detailed treatment.

\begin{deflemma}\label{FittingDef}
Let $X$ be a scheme and let $\Ff$ be a coherent sheaf on $X$. Let \[
\Ee_1 \xrightarrow{\hspace{1em}\psi\hspace{1em}} \Ee_0 \longrightarrow \Ff \rightarrow 0
\]
be any finite presentation of $\Ff$, with $\Ee_0$ locally free of rank $r$. The \textit{$0$-th Fitting ideal} of $\Ff$, denoted $\Fitt_0(\Ff)$, is defined as the image of the map: \[
\textstyle{\bigwedge^r }\Ee_1 \otimes \left(\textstyle	{\bigwedge^r } \Ee_0\right)^{-1} \xrightarrow{\hspace{1em}\Psi\hspace{1em}} \Oo_X
\]
induced by $\bigwedge^r\psi: \bigwedge^r\Ee_1 \rightarrow \bigwedge^r\Ee_0$ and is independent of the choiche of the presentation for $\Ff$. If $\Ee_1$ is locally free, then $\psi$ can be locally represented by a matrix and the $0$-th Fitting ideal is generated locally by the minors of size $r$ of such matrix, with the convention	 that the determinant of the $0 \times 0$ matrix is $1$.
\end{deflemma}
\begin{proof}
	The definition is the sheaf-theoretic reformulation of \cite[Corollary-Definition 20.4]{Eis}.
\end{proof}

As a consequence of the definition, the formation of Fitting ideals commutes with restrictions and with base change \cite[Corollary 20.5]{Eis}.

\begin{deflemma}
Let $X$ be a scheme and $\Ff$ be a coherent sheaf on $X$. The \textit{$0$-th Fitting scheme} of $\Ff$ is the subscheme of $X$ defined as the zero locus of $\Fitt_0(\Ff) \subseteq \Oo_X$ in $X$.  The $0$-th Fitting scheme contains the support of $\Ff$, as a closed subscheme with the same underlying topological space.
\end{deflemma}
\begin{proof}
The result is stated in \cite[Definition V-10]{EisHar}.
\end{proof}

\section{The direct and inverse image for generalized divisors and generalized line bundles}\label{SectionDirectImageSet}
Let $\pi: X \rightarrow Y$ be a finite, flat morphism of degree $n$ between curves. In the present section, we extend the notion of direct and inverse image from Cartier divisors to generalized divisors and generalized line bundles.

\subsection{The direct image}

Here, we define the notion of direct image for generalized divisors. First, we start with the case of an effective generalized divisor. Let $D \in \GDiv^+(X)$ be an effective generalized divisor on $X$, with ideal sheaf $\Ii$. Since $\pi$ is finite and flat, the pushforward $\pi_*( \Oo_X / \Ii )$ is a coherent $\Oo_Y$-module.

\begin{deflemma}{(Direct image of an effective generalized divisor)}
Let $D \in \GDiv^+(X)$ be an effective generalized divisor on $X$, with ideal sheaf $\Ii \subseteq \Oo_X$. The \textit{direct image of D with respect to } $\pi$, denoted with $\pi_*(D)$, is the effective generalized divisor on $Y$ defined by the $0$-th Fitting ideal of  $\pi_*( \Oo_X / \Ii )$.
\end{deflemma}

\begin{proof}
Let us prove that the 0-th Fitting ideal of  $\pi_*( \Oo_X / \Ii )$ defines an effective generalized divisor on $Y$. First note that $\Fitt_0 \pi_*( \Oo_X / \Ii )$ is a subsheaf of $\Oo_Y$, and is a coherent $\Oo_Y$-module; hence, it is an effective fractional ideal. To prove that  $\Fitt_0 \pi_*(\Oo_X / \Ii)$  is nondegenerate, consider a generic point $\eta \in Y$. Since the map $\pi$ is dominant, the preimage $\pi^{-1}(\eta) = \{ \eta_i \}$ is a finite set of generic points of $X$. Then,
\begin{align*}
\Fitt_0(\pi_*(\Oo_X / \Ii))_\eta &= \Fitt_0(\pi_*(\Oo_X / \Ii)_\eta) = \\
&=  \Fitt_0( (0)_{\eta} ) = \Oo_{X,\eta} = \Kk_{X, \eta}.
\end{align*}
\end{proof}

\begin{lemma}\label{linear} {\normalfont(Linearity w.r.t effective Cartier divisors)}
Let $D \in \GDiv^+(X)$ be an effective generalized divisor and $E \in \CDiv^+(X)$ be an effective Cartier divisor on $X$. Then, $\pi_*(E)$ is a Cartier divisor on $Y$ and $\pi_*(D+E) = \pi_*(D)+\pi_*(E)$.
\end{lemma}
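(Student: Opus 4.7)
The plan is to reduce the statement to a local computation on $Y$. I would pick an open affine $V \subseteq Y$ on which $\pi_*\Oo_X$ is free of rank $n$ and, setting $U = \pi^{-1}(V)$, the ideal $\Ii_E|_U$ is generated by a single non-zero-divisor $f \in \Gamma(U, \Oo_X)$ (possible since $E$ is an effective Cartier divisor). In such a trivialization, multiplication by $f$ on $\pi_*\Oo_X|_V$ is represented by an $n \times n$ matrix $F$ with entries in $\Gamma(V, \Oo_Y)$, and $\det F = \Nm_{Y/X}(f)$.

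For the Cartier claim, pushing forward the exact sequence $0 \to \Oo_X \xrightarrow{\cdot f} \Oo_X \to \Oo_X/\Ii_E \to 0$ (exactness preserved because $\pi$ is finite) yields the free resolution $0 \to \Oo_V^n \xrightarrow{F} \Oo_V^n \to \pi_*(\Oo_X/\Ii_E)|_V \to 0$, so $\Fitt_0(\pi_*(\Oo_X/\Ii_E))|_V = (\det F)$. The injectivity of $F$ forces $\det F$ to be a non-zero-divisor (standard adjugate-matrix argument over a noetherian ring), so the ideal is locally principal and generated by a regular element. Hence $\pi_*(E)$ is an effective Cartier divisor on $Y$.

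For the additivity, observe that locally $\Ii_{D+E}|_U = f \cdot \Ii_D|_U$, and that multiplication by $f$ gives a short exact sequence $0 \to \Oo_X/\Ii_D \to \Oo_X/\Ii_{D+E} \to \Oo_X/\Ii_E \to 0$ (injectivity uses that $f$ is a non-zero-divisor, so $fa \in f\Ii_D$ forces $a \in \Ii_D$, while the cokernel identification follows from $\Oo_X/(f\Oo_X) = \Oo_X/\Ii_E$). Pushing forward by $\pi_*$ preserves exactness. Choosing a generating matrix $\alpha : \Oo_V^p \to \Oo_V^n = \pi_*\Oo_X|_V$ with image $\pi_*\Ii_D|_V$, the module $\pi_*(\Oo_X/\Ii_D)|_V$ is presented by $\alpha$, and since $f \cdot \pi_*\Ii_D = \mathrm{Im}(F\alpha)$, the module $\pi_*(\Oo_X/\Ii_{D+E})|_V$ is presented by the $n \times p$ matrix $F\alpha$. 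By the Cauchy--Binet formula applied to the square matrix $F$ and the rectangular matrix $\alpha$, every maximal minor of $F\alpha$ equals $\det F$ times the corresponding maximal minor of $\alpha$, yielding the equality of ideals
\[
\Fitt_0(\pi_*(\Oo_X/\Ii_{D+E}))|_V = (\det F) \cdot \Fitt_0(\pi_*(\Oo_X/\Ii_D))|_V = \Fitt_0(\pi_*(\Oo_X/\Ii_E))|_V \cdot \Fitt_0(\pi_*(\Oo_X/\Ii_D))|_V.
\]
Under the correspondence between fractional ideals and divisors, this is precisely $\pi_*(D+E) = \pi_*(D) + \pi_*(E)$ on $V$, and the identity globalizes because the Fitting ideal is intrinsic and compatible with restriction.

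The main obstacle I expect is the careful manipulation of the local presentations: verifying that $f \cdot \pi_*\Ii_D$ is truly the image of $F\alpha$ in $\Oo_V^n$, and that Cauchy--Binet produces an \emph{equality} of ideals rather than a mere inclusion. Once the local bookkeeping is settled, the Fitting-ideal computation is essentially determinantal and patches without issue.
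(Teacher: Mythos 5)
Your proof is correct and follows essentially the same route as the paper: trivialize $\pi_*\Oo_X$ locally, present $\pi_*(\Oo_X/\Ii_{D+E})$ by a matrix that factors as the square matrix $F$ of multiplication by the local equation of $E$ times a presentation matrix of $\pi_*(\Oo_X/\Ii_D)$, and conclude from the multiplicativity of maximal minors (your Cauchy--Binet step is exactly the paper's observation that each $n\times n$ minor of the product is $\det F$ times the corresponding minor). Your explicit check that $\det F$ is a non-zero-divisor is a welcome addition the paper leaves implicit, while the short exact sequence $0 \to \Oo_X/\Ii_D \to \Oo_X/\Ii_{D+E} \to \Oo_X/\Ii_E \to 0$ is not actually used in your Fitting-ideal computation and could be omitted.
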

\begin{proof}
Let $\Jj$ be the ideal sheaf of $E$ and let $\{V_i\}$ be an open affine cover of $Y$ such that $\pi_* \Oo_X$ and $\pi_* \Jj$ are trivial on each $V_i$ as in Lemma \ref{TrivialOnSaturated}. The sheaf $\pi_* \Oo_X/\Jj$ is locally presented by the exact sequence:
\[ (\pi_* \Oo_X)_{|V_i} \xrightarrow{\cdot h_i} (\pi_* \Oo_X)_{|V_i} \rightarrow (\pi_* \Oo_X/\Jj)_{|V_i} \rightarrow 0
\]
where $h_i \in \Gamma(\pi^{-1}(V_i), \Oo_X)$ is a local generator for $\Jj \subset \Oo_X$ on $\pi^{-1}(V_i)$. Since $(\pi_* \Oo_X)_{|V_i}$ is a free $(\Oo_Y)_{|V_i}$-module of rank $n$, there is a $n \times n$ matrix $H_i$ with entries in $(\Oo_Y)_{|V_i}$ representing the multiplication by $h_i$. Then,  by Definition \ref{FittingDef},  the 0-th Fitting ideal of $\pi_* \Oo_X/\Jj$ is the principal ideal generated locally by $\det (\cdot h_i)$ on $V_i$.  In particular, $\pi_*(E)$ is a Cartier divisor.

Let $\Ii$ be the ideal sheaf of $D$, so that $\Ii \cdot \Jj$ is the ideal sheaf of $D+E$. To prove the remaining part of the thesis, we show that the equality \[
\Fitt_0 (\pi_* \Oo_X/\Ii\Jj) = \Fitt_0 (\pi_* \Oo_X/\Ii)\cdot \Fitt_0(\pi_* \Oo_X/\Jj)
\]
holds locally around any point $y \in Y$. Let $V$ be an open neighborhood of $y$ such that $(\pi_* \Oo_X)_{|V}$ is a free $\Oo_Y$-module, $\pi_* \Ii_{|V}$ is generated by sections $s_1, \dots, s_r$ of  $\Gamma(V, \pi_* \Oo_X)$ and $\pi_* \Jj_{|V}$ is a principal ideal generated by a section $h$ of $\Gamma(V, \pi_* \Oo_X)$.  In terms of exact sequences:
\begin{align*}
(\pi_* \Oo_X)_{|V}^{\oplus r} \xrightarrow{(\cdot s_1, \dots, \cdot s_r)}  (\pi_* \Oo_X)_{|V} \rightarrow (\pi_* \Oo_X/\Ii)_{|V} \rightarrow 0 \\
(\pi_* \Oo_X)_{|V} \xrightarrow{\cdot h} (\pi_* \Oo_X)_{|V} \rightarrow (\pi_* \Oo_X/\Jj)_{|V} \rightarrow 0
\end{align*}
The ideal sheaf $\Ii \cdot \Jj$ is equal to $\Jj \cdot \Ii$, which is generated on $V$ by the sections $h s_1 , \dots, h s_n $. In terms of exact sequences:
\begin{align*}
(\pi_* \Oo_X)_{|V}^{\oplus r} \xrightarrow{( \cdot h s_1, \dots, \cdot h s_r )}  (\pi_* \Oo_X)_{|V} \rightarrow (\pi_* \Oo_X/\Ii\Jj)_{|V} \rightarrow 0 
\end{align*}
Denote with $S_i$ and $H$ the $(\Oo_Y)_{|V}$-matrices representing the multiplication by $s_i$ and $h$ respectively.  The map $( \cdot h s_1, \dots,  \cdot h s_r )$ in the previous exact sequence is represented by the $n \times nr$ matrix
\[ M= 
\left[
\begin{array}{c|c|c}
H S_1 & \dots & H S_r
\end{array}
\right]
 = H \left[
 \begin{array}{c|c|c}
  S_1 & \dots & S_r
 \end{array}
 \right]
\]
The 0-th Fitting ideal of $\pi_* \Oo_X/\Ii\Jj$, restricted to $V$, is the ideal of $(\pi_* \Oo_X)_{|V}$ generated by the $n \times n$ minors of the matrix $M$. Any such minor is equal to a $n \times n$  minor of the matrix $\left[
\begin{array}{c|c|c}
S_1 & \dots & S_r
\end{array}
\right]$ multiplied by $\det H$. Then, by Definition \ref{FittingDef}, 
\[
\Fitt_0 (\pi_* \Oo_X/\Ii\Jj)_{|V} = \Fitt_0 (\pi_* \Oo_X/\Ii)_{|V} \cdot \Fitt_0(\pi_* \Oo_X/\Jj)_{|V}
\]
\end{proof}

\begin{deflemma}\label{DirectImageDef} {(Direct image of a generalized divisor)}
Let $D \in \GDiv(X)$ be a generalized divisor on $X$, such that $D=D'-E$ with  $D' \in \GDiv^+(X)$ and $E \in \CDiv^+(X)$ by Lemma \ref{DifferenceOfEffective}. The \textit{direct image of D with respect to $\pi$}, denoted with $\pi_*(D)$, is the generalized divisor $\pi_*(D') - \pi_*(E)$.
\end{deflemma}
\begin{proof}
To prove that it is well defined, let $D=D'-E=\widetilde{D}'-\widetilde{E}$, with $D', \widetilde{D}'$ effective and $E, \widetilde{E}$ effective Cartier. 
Since $E, \widetilde{E}$ are Cartier, $D'+ \widetilde{E}=\widetilde{D}' +E$; then, by Lemma \ref{linear} we have:
	\[\pi_*(D') + \pi_*(\widetilde{E}) = \pi_*(\widetilde{D}') + \pi_*(E).\]
Since $\pi_*(E)$ and $\pi_*(\widetilde{E})$ are also Cartier by Lemma \ref{linear}, they can be subtracted from each side in order to obtain:
\[ \pi_*(D') - \pi_*(E) = \pi_*(\widetilde{D}') - \pi_*(\widetilde{E}). \]
This shows that $\pi_*(D)$ does not depend on the choice of $D'$ and $E$.
\end{proof}

We study now some properties of the direct image for generalized divisors.

\begin{prop}{\normalfont (Properties of direct image) }\label{DirectImageProperties}
\begin{enumerate}
	\item Let $D \in \CDiv(X)$ be a Cartier divisor. Then, $\pi_*(D)$ is a Cartier divisor and $\pi_*(-D)=-\pi_*(D)$. Moreover, $\pi_*(D)$ coincides with $\pi_*(D)$ of Definition \ref{DirectInverseImageForCartDef}.
	\item Let $D,E \in \GDiv(X)$ be generalized divisors, such that $E$ is Cartier. Then, $\pi_*(D+E) = \pi_*(D) + \pi_*(E)$.
	\item Let $V \subset Y$ be an open subset, and denote with $\pi_U$ the restriction of $\pi$ to $U = \pi^{-1}(V) \subset X$. Let $D \in \GDiv(X)$ be a generalized divisor on $X$. Then,
\[ (\pi_{U})_*(D_{|U}) = \pi_*(D)_{|V} \]
	\item Let $D, D' \in \GDiv(X)$ be generalized divisors such that $D \sim D'$. Then, $\pi_*(D) \sim \pi_*(D')$.
\end{enumerate}
\end{prop}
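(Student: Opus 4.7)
The plan is to prove the four parts in order, building on Lemma \ref{linear} and the reduction procedure of Definition/Lemma \ref{DirectImageDef}.

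For (1), given a Cartier divisor $D$, write $D=D_1-D_2$ as a difference of effective Cartier divisors (always possible for Cartier divisors: multiply the fractional ideal of $D$ by an ideal of denominators). By Lemma \ref{linear}, both $\pi_*(D_1)$ and $\pi_*(D_2)$ are Cartier, hence so is $\pi_*(D)=\pi_*(D_1)-\pi_*(D_2)$. The identity $\pi_*(-D)=-\pi_*(D)$ follows by rewriting $-D=D_2-D_1$ and applying the definition of $\pi_*$. For the compatibility with Definition \ref{DirectInverseImageForCartDef}, restrict to an affine cover $\{V_i\}$ as in Lemma \ref{TrivialOnSaturated} and suppose first that $D$ is effective Cartier; if $h_i\in\Gamma(V_i,\pi_*\Oo_X)$ is a local generator of $\pi_*\Ii_D|_{V_i}$, the resolution used in Lemma \ref{linear} shows that $\Fitt_0(\pi_*\Oo_X/\Ii_D)|_{V_i}$ is the principal ideal generated by $\det(\cdot h_i)=\Nn_{Y/X}(h_i)$. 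This is exactly the local generator (with $g_i=1$) given in Definition \ref{DirectInverseImageForCartDef}. For a general Cartier $D$, both constructions are extended by linearity in the same way, so they coincide.

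For (2), use Lemma \ref{DifferenceOfEffective} to write $D=D'-F$ with $D'$ effective generalized and $F$ effective Cartier, and, since $E$ is Cartier, write $E=E_1-E_2$ with $E_1,E_2$ effective Cartier. Then
\[ D+E=(D'+E_1)-(F+E_2), \]
where $D'+E_1$ is an effective generalized divisor and $F+E_2$ is an effective Cartier divisor. Applying Definition \ref{DirectImageDef} and two instances of Lemma \ref{linear} gives
\[ \pi_*(D+E)=\bigl(\pi_*(D')+\pi_*(E_1)\bigr)-\bigl(\pi_*(F)+\pi_*(E_2)\bigr). \]
Rearranging the sum (legitimate because $\pi_*(E_1)$, $\pi_*(E_2)$, $\pi_*(F)$ are all Cartier by Lemma \ref{linear} and (1)) yields $\pi_*(D)+\pi_*(E)$.

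For (3), use that the formation of Fitting ideals commutes with arbitrary base change, in particular with open restrictions. For $D$ effective generalized this immediately gives $(\pi_U)_*(D|_U)=\pi_*(D)|_V$; for an arbitrary generalized divisor $D=D'-F$, restriction is compatible with the decomposition and with the linearity of (2), so the result propagates. For (4), if $D\sim D'$ then $D-D'=(f)$ is a principal Cartier divisor, and by (2) we have $\pi_*(D)-\pi_*(D')=\pi_*((f))$; so it suffices to show $\pi_*((f))$ is principal. Writing $f=f_i/g_i$ locally over an affine cover $\{V_i\}$ as in Lemma \ref{TrivialOnSaturated}, part (1) together with Definition \ref{DirectInverseImageForCartDef} identifies $\pi_*((f))$ as the Cartier divisor with local generator $\Nn_{Y/X}(f_i)/\Nn_{Y/X}(g_i)$; by the multiplicativity of the norm in \eqref{normpr}, these local sections glue to a single section $\Nm(f)\in\Gamma(Y,\Kk_Y^*)$, so $\pi_*((f))=(\Nm(f))$ is principal.

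The main obstacle I expect is in part (1), where one must verify that the Fitting-ideal construction genuinely produces the Cartier divisor given by the norm-based Definition \ref{DirectInverseImageForCartDef}. This reduces to the essentially tautological identification of $\det(\cdot h_i)$ from Definition \ref{FittingDef} with $\Nn_{Y/X}(h_i)$ from Definition \ref{NormOfSheaves}, but care is needed in tracking local trivializations so that the comparison is well-defined across the cover.
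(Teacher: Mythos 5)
Your proposal is correct and follows essentially the same route as the paper: decompose into effective divisors via Lemma \ref{DifferenceOfEffective}, reduce everything to Lemma \ref{linear}, identify $\Fitt_0(\pi_*\Oo_X/\Ii)$ locally with the principal ideal generated by $\det(\cdot h_i)=\Nn_{Y/X}(h_i)$ for the comparison with Definition \ref{DirectInverseImageForCartDef}, and use multiplicativity of the norm to glue the local generators of $\pi_*((f))$ into a global meromorphic section for part (4). The only cosmetic difference is that in (1) you decompose $D$ directly as a difference of effective Cartier divisors, while the paper applies Lemma \ref{DifferenceOfEffective} and then observes that $E=D+F$ is automatically Cartier; the two are interchangeable.
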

\begin{proof}
To prove (1), consider $D=E-F$ with $E,F$ effective and $F$ Cartier by Lemma \ref{DifferenceOfEffective}. Since $D$ is Cartier and $E=D+F$, then also $E$ is Cartier. By Definition \ref{DirectImageDef}, \[
\pi_*(D)=\pi_*(E)-\pi_*(F).
\]
By Lemma \ref{linear}, it is a difference of Cartier divisors and hence it is Cartier. To compute $\pi_*(-D)$, note that since $F$ is Cartier then $-D=F-E$, and it is a difference of effective Cartier divisors; then, apply Definition \ref{DirectImageDef} to obtain \begin{align*}
\pi_*(-D) &= \pi_*(F)-\pi_*(E) = \\
&= -\pi_*(D).
\end{align*}
To compare $\pi_*(D)$ with Definition \ref{DirectInverseImageForCartDef}, let $\Ii$ and $\Jj$ be the ideal sheaves of $E$ and $F$ and let $\{V_i\}_{i \in I}$ be an open cover of $Y$ such that $\pi_*\Ii_{|V_i}$  and $\pi_*\Jj_{|V_i}$ are non-degenerate principal ideals of $(\pi_*\Oo_X)_{|V_i}$-modules generated by the regular sections $f_i$ and $g_i$ of $\Gamma(V, \pi_*\Oo_X)$, respectively on each $i \in I$. The fractional ideal of $D$ is then generated on each $V_i$ by the meromorphic regular section $h_i = f_i/g_i$ of $\Gamma(V_i, \pi_*\Kk_X^*)$.
By the proof of Lemma \ref{linear}, the ideal sheaves of $\pi_*(E)$ and $\pi_*(F)$ are generated on each $V_i$ by $\det(\cdot f_i)$ and $\det(\cdot g_i)$ , so the fractional ideal of $\pi_*(D)$ is generated on each $V_i$ by the meromorphic regular section $\det(\cdot f_i) / \det(\cdot g_i)$ of $\Gamma(V, \Kk_Y^*)$. Using the same cover in Definition \ref{DirectInverseImageForCartDef} and applying Definition \ref{NormOfSheaves}, the sheaf $\pi_*(D)$ defined above and the sheaf $\pi_*(D)$ defined in Section \ref{SectionReviewNorm} have the same local generators, hence they are equal.

To prove (2), consider $D=D_1-D_2$ and $E=E_1-E_2$, with $D_1, D_2, E_1, E_2$ effective and $D_2, E_2$ Cartier. Note that $D+E=(D_1+E_1)-(D_2+E_2)$, and it is a difference of effective divisors with $D_2+E_2$ Cartier. Then, applying Definition \ref{DirectImageDef} and Lemma \ref{linear}, we obtain:
\begin{align*}
	\pi_*(D+E) &= \pi_*((D_1 +E_1)-(D_2+E_2)) = \\
	&=\pi_*(D_1+E_1)-\pi_*(D_2+E_2)=\\
	&=\pi_*(D_1)+\pi_*(E_1)-\pi_*(D_2)-\pi_*(E_2)=\\
	&=(\pi_*(D_1)-\pi_*(D_2))-(\pi_*(E_1)-\pi_*(E_2) ) =\\
	&=\pi_*(D)+\pi_*(E).
\end{align*}

To prove (3), if $D$ is effective, the result follows from Definition \ref{FittingDef}. Then, observe that the operations of product and inverse of fractional ideals commute with restrictions.

To prove (4), let $f \in \Gamma(X, \Kk_X)$ be a global section that generates a principal divisor $E=(f) \in \Prin(X)$. By linearity, it is sufficient to prove that $\pi_*(E)$ is again principal. Let $\{ V_i \}$ be an affine open cover of $Y$, such that locally \[
f_{|\pi^{-1}(V_i)}  = g_i / h_i, \hspace{3em} g_i, h_i \in \Gamma(\pi^{-1}(V_i), \Oo_X)\]
In terms of divisors, this means that $(f)_{|\pi^{-1}(V_i)}=(g_i)-(h_i)$, and it is a difference of effective Cartier divisors on $\pi^{-1}(V_i)$. Then, applying Part (3) and reasoning simiarly to Part (1), we obtain: \begin{align*}
\pi_*( (f) )_{|V} &= (\pi_i)_*( (g_i) ) - (\pi_i)_*( (h_i) ) =  \\
&= 	( \det [\cdot g_i] ) - ( \det [\cdot h_i] )  = \\
&= 	( \det [\cdot g_i] / \det [\cdot h_i] ).
\end{align*}
Since taking determinants is multiplicative, the local sections $\det [\cdot g_i] / \det [\cdot h_i]$ glue together to give a global section $\widetilde f$ of $\Kk_Y$, such that $\pi_*( E ) = ( \widetilde{f} )$.
\end{proof}

We are now ready to define the direct image for generalized line bundles. 

\begin{deflemma}{(Direct image for generalized line bundles)}\label{DirectImageForGPic}
The direct image for generalized divisors induce a direct image map between the sets of generalized line bundles, defined as:
\begin{align*}
[\pi_*]: \hspace{2em} \GPic(X) &\longrightarrow \GPic(Y) \\
[D] &\mapsto [\pi_*(D)].
\end{align*}
\end{deflemma} 
\begin{proof}
Recall that for any curve $X$, the set $\GPic(X)$ can be seen equivalently as the set of generalized line bundles or as the set of generalized divisors modulo linear equivalence. Here, $[\pi_*]$ is defined in terms of generalized divisors modulo linear equivalence. By Proposition \ref{DirectImageProperties}, the direct images of linearly equivalent divisors are linearly equivalent, hence  $[\pi_*]$ is well defined.
\end{proof}

In the remaining part of this subsection, we study an alternative formula for $[\pi_*]$ in terms of generalized line bundles. First, we need a technical lemma.

\begin{lemma}\label{TorsionVsDoubleDual}
Let $\Ff$ be a coherent sheaf on a curve $X$ which is locally free of rank 1 at any generic point and let $\omega=\omega_X$ be the canonical, or dualizing sheaf of $X$. Let $T(\Ff)$ be the torsion subsheaf of $\Ff$ and $\Ff^{\omega\omega}=\Hhom(\Hhom(\Ff, \omega), \omega)$ be the double $\omega$-dual. There, there is a canonical isomorphism 
\[
\Ff^{tf}= \Ff/T(\Ff) \xrightarrow{\hspace{1em}\sim\hspace{1em}} \Ff^{\omega\omega}. 
\]
\end{lemma}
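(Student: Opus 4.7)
The plan is to reduce to the torsion-free case and then to invoke the classical biduality of maximal Cohen--Macaulay modules with respect to the canonical module.

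\textbf{Step 1 (reduction to the torsion-free case).} Since $X$ has pure dimension 1, the dualizing sheaf $\omega$ is itself torsion-free, so any morphism from a torsion sheaf to $\omega$ vanishes; in particular $\Hhom(T(\Ff),\omega)=0$. Applying the left-exact contravariant functor $\Hhom(-,\omega)$ to the exact sequence $0 \to T(\Ff) \to \Ff \to \Ff^{tf} \to 0$ produces a canonical isomorphism $\Hhom(\Ff^{tf},\omega) \simeq \Hhom(\Ff,\omega)$, and applying it once more yields $(\Ff^{tf})^{\omega\omega} \simeq \Ff^{\omega\omega}$. The biduality map $\Ff \to \Ff^{\omega\omega}$, defined by evaluation on sections, kills $T(\Ff)$ because its image would lie in $T(\omega)=0$; hence it factors through a canonical map $\Ff^{tf} \to \Ff^{\omega\omega}$, which we must show is an isomorphism.

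\textbf{Step 2 (local Cohen--Macaulay reduction).} The question is local on $X$; at a stalk set $R=\Oo_{X,x}$, $\omega_R=\omega_{X,x}$ and $M=\Ff^{tf}_x$. Since $X$ is noetherian and pure of dimension 1, $R$ is a 1-dimensional noetherian local ring without embedded primes, hence Cohen--Macaulay; the embeddability hypothesis on $X$ ensures that $\omega_R$ is a canonical module for $R$. The module $M$ is finitely generated and torsion-free, so it contains a non-zero-divisor, giving $\mathrm{depth}_R(M) \geq 1 = \dim R$; thus $M$ is a maximal Cohen--Macaulay $R$-module.

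\textbf{Step 3 (MCM biduality).} By the standard theory of the canonical module over a Cohen--Macaulay local ring (e.g.\ Bruns--Herzog, Theorem 3.3.10), the functor $\mathrm{Hom}_R(-,\omega_R)$ preserves MCM modules and the natural biduality map $M \to \mathrm{Hom}_R(\mathrm{Hom}_R(M,\omega_R),\omega_R)$ is an isomorphism. Sheafifying the local isomorphisms yields the canonical isomorphism $\Ff^{tf} \xrightarrow{\sim} \Ff^{\omega\omega}$.

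\textbf{Main obstacle.} The only non-formal input is the MCM biduality in Step 3; everything else is bookkeeping with the defining exact sequence of $\Ff^{tf}$ together with the torsion-freeness of $\omega$ on a pure-dimensional curve. A minor point requiring care is verifying that the abstract isomorphism $(\Ff^{tf})^{\omega\omega} \simeq \Ff^{\omega\omega}$ produced in Step 1 is compatible with evaluation, so that under the identification it is the biduality map of $\Ff^{tf}$ that one is really inverting.
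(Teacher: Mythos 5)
Your proof is correct and follows essentially the same route as the paper: both factor the evaluation map $\Ff \to \Ff^{\omega\omega}$ through the torsion-free quotient $q:\Ff\twoheadrightarrow\Ff^{tf}$, identify $(\Ff^{tf})^{\omega\omega}$ with $\Ff^{\omega\omega}$, and reduce to the fact that torsion-free (equivalently $S_1$, equivalently maximal Cohen--Macaulay) sheaves on such a curve are $\omega$-reflexive. The only difference is the source of that last input --- the paper quotes Hartshorne's Propositions 1.5 and 1.6 on generalized divisors, while you localize and invoke canonical-module biduality for MCM modules over a Cohen--Macaulay local ring; these are the same underlying statement.
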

\begin{proof}
The sheaf $\Ff^{tf}$ is endowed with a quotient map $q: \Ff \twoheadrightarrow \Ff^{tf}$, which is universal among all arrows from $\Ff$ to torsion-free sheaves (i.e. pure of dimension 1). Note that, since $\Ff$ is locally free of rank 1 at any generic point of $X$, then $q$ is generically an isomorphism. Let $\alpha(\Ff): \Ff \rightarrow \Ff^{\omega\omega}$ be the canonical map from $\Ff$ to its double $\omega$-dual. Since taking double $\omega$-duals is functorial, there is a commutative diagram of homomorphism of sheaves:
\[
\begin{tikzcd}
\Ff \arrow{r}{q} \arrow{d}{\alpha(\Ff)}  & \Ff^{tf} \arrow{d}{\alpha(\Ff^{tf})} \\
\Ff^{\omega\omega} \arrow{r}{q^{\omega\omega}} & (\Ff^{tf})^{\omega\omega}.
\end{tikzcd}
\]
Using \cite[Proposition 1.5]{Har07} and recalling that $S_1 = S_2$ for sheaves on curves, we note that a sheaf on $X$ is $\omega$-reflexive if and only if it is 
$S_1$. In particular, $\Ff^{\omega\omega}$ is $\omega$-reflexive by  \cite[Proposition 1.6]{Har07} and hence it is $S_1$. Then, by the universal property of $q$, there is a unique map $\psi: \Ff^{tf} \rightarrow \Ff^{\omega\omega}$ such that $\psi \circ q = \alpha(\Ff)$. Note that \[
q^{\omega\omega} \circ \psi \circ q = q^{\omega\omega}  \circ \alpha(\Ff) = \alpha(\Ff^{tf}) \circ q,\] so by surjectivity of $q$ we conclude that $q^{\omega\omega} \circ \psi = \alpha(\Ff^{tf})$.

We show moreover that $\psi$ is an isomorphism. By construction $\Ff^{tf}$ is pure, hence it is $\omega$-reflexive and $\alpha(\Ff^{tf})$ is an isomorphism. Moreover, $q^{\omega\omega}$  is surjective and generically an isomorphism since $q$ is surjective and generically an isomorphism.  Then, the kernel $K$ of $q^{\omega\omega}$ is a subsheaf of $\Ff^{\omega\omega}$ which is generically zero. Since $\Ff^{\omega\omega}$ is pure, we conclude that $K$ is everywhere zero and then $q^{\omega\omega}$ is an isomorphism. This shows that $\psi=(q^{\omega\omega})^{-1} \circ \alpha(\Ff^{tf})$ is an isomorphism.
\end{proof}

In order to study the formula for $[\pi_*]$, we study a preliminary formula for $\pi_*$ in the case of effective generalized divisors.

\begin{lemma}\label{FormulaForEffective}
Let $D \in \GDiv^+(X)$ be an effective generalized divisor with ideal sheaf $\Ii \subset \Oo_X$. Then, there is an injection: \[
\left(\textstyle{\bigwedge^n}\big( \pi_* \Ii \big)\right)^{\omega\omega} \otimes \det( \pi_* \Oo_X)^{-1} \hookrightarrow \det( \pi_* \Oo_X) \otimes \det( \pi_* \Oo_X)^{-1} \xrightarrow{{}_\sim} \Oo_Y
\]
whose image in $\Oo_Y$ is the $0$-th Fitting ideal of  $\pi_*\Oo_X/\Ii$.
\end{lemma}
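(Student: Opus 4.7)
The plan is to compute the Fitting ideal locally as the image of a wedge-power map, then identify that image with the $\omega$-double dual via Lemma \ref{TorsionVsDoubleDual}.

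Since $\pi$ is finite, hence affine, the functor $\pi_*$ is exact, so the inclusion $\Ii\hookrightarrow\Oo_X$ induces a short exact sequence
\[
0 \longrightarrow \pi_*\Ii \longrightarrow \pi_*\Oo_X \longrightarrow \pi_*(\Oo_X/\Ii) \longrightarrow 0,
\]
and I would let $\varphi\colon \bigwedge^n(\pi_*\Ii)\to\det(\pi_*\Oo_X)$ be the induced morphism of $n$-th exterior powers. The first step is to show that the image of $\varphi\otimes\mathrm{id}_{\det(\pi_*\Oo_X)^{-1}}$ in $\Oo_Y$ equals $\Fitt_0(\pi_*(\Oo_X/\Ii))$. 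This is a local statement: over an open $V\subset Y$ on which $\pi_*\Oo_X$ trivializes as $\Oo_V^n$, a choice of finite generating set for $\pi_*\Ii|_V$ produces a surjection $\Oo_V^m\twoheadrightarrow \pi_*\Ii|_V$ whose composition with the inclusion yields a finite presentation
\[
\Oo_V^m \xrightarrow{\ \psi\ } \Oo_V^n \longrightarrow \pi_*(\Oo_X/\Ii)|_V \longrightarrow 0
\]
satisfying the hypotheses of Definition/Lemma \ref{FittingDef} with $r=n$. Because $\bigwedge^n$ sends surjections to surjections, the map $\bigwedge^n\psi$ factors as a surjection $\bigwedge^n\Oo_V^m \twoheadrightarrow \bigwedge^n(\pi_*\Ii)|_V$ followed by $\varphi|_V$, so $\operatorname{image}(\bigwedge^n\psi)=\operatorname{image}(\varphi|_V)$, which is exactly the ideal computed by the Fitting-ideal recipe after twisting by $\det(\pi_*\Oo_X)^{-1}$.

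The second step is to replace $\bigwedge^n(\pi_*\Ii)$ with its double $\omega$-dual. Nondegeneracy of $\Ii$ gives $(\pi_*\Ii)_\eta=(\pi_*\Oo_X)_\eta$ at each generic point $\eta$ of $Y$, so $\bigwedge^n(\pi_*\Ii)$ is locally free of rank $1$ at every generic point of $Y$. Since $\det(\pi_*\Oo_X)$ is a line bundle and therefore torsion-free, $\varphi$ factors through the maximal torsion-free quotient $\bigwedge^n(\pi_*\Ii)/T\bigl(\bigwedge^n(\pi_*\Ii)\bigr)$ as an injection with the same image as $\varphi$. By Lemma \ref{TorsionVsDoubleDual} applied to $\bigwedge^n(\pi_*\Ii)$, this torsion-free quotient is canonically identified with $\bigl(\bigwedge^n(\pi_*\Ii)\bigr)^{\omega\omega}$. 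Tensoring with the invertible sheaf $\det(\pi_*\Oo_X)^{-1}$ preserves injectivity, which produces the desired injection with image $\Fitt_0(\pi_*(\Oo_X/\Ii))$.

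The main technical difficulty will be the bookkeeping in the Fitting-ideal step: I will need to check carefully that the factorization of the presentation map $\psi$ through $\pi_*\Ii|_V$, combined with the surjectivity of the induced map on $n$-th exterior powers, really forces $\operatorname{image}(\bigwedge^n\psi)$ to coincide with $\operatorname{image}(\varphi|_V)$, and that the resulting local identification is independent of the chosen trivialization and generating set so that it glues to a global statement. After that is pinned down, the identification with the double $\omega$-dual is a direct application of Lemma \ref{TorsionVsDoubleDual}.
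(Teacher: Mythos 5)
Your proposal is correct and follows essentially the same route as the paper: push forward the ideal-sheaf sequence, identify $\Fitt_0(\pi_*(\Oo_X/\Ii))$ with the image of the determinant of the inclusion $\pi_*\Ii\hookrightarrow\pi_*\Oo_X$, and factor that map through the torsion-free quotient, which Lemma \ref{TorsionVsDoubleDual} identifies with the double $\omega$-dual. The only cosmetic difference is that you verify the Fitting-ideal identification by reducing to a free presentation, whereas the paper applies Definition \ref{FittingDef} directly to the presentation with $\Ee_1=\pi_*\Ii$ not locally free.
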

\begin{proof}
Consider the short exact sequence: \[
0 \rightarrow \Ii \rightarrow \Oo_X \rightarrow \Oo_D \rightarrow 0.
\]
Since $\pi$ is finite and flat, the pushforward induces a a short exact sequence:\[
0 \rightarrow \pi_*\Ii \xrightarrow{\varphi} \pi_*\Oo_X \rightarrow \pi_*(\Oo_X/\Ii) \rightarrow 0.
\]
In particular, the last exact sequence is a finite presentation of $\pi_*(\Oo_X/\Ii)$ whose middle term is locally free. Hence, by Definition \ref{FittingDef}, the $0$-th Fitting ideal of $\pi_*(\Oo_X/\Ii)$ is equal to the image of the morphism: \[
\textstyle{\bigwedge^n}\big( \pi_* \Ii \big) \otimes \det \big( \pi_* \Oo_X \big)^{-1} \xrightarrow{\hspace{.5em}\det\varphi \otimes 1 \hspace{.5em}} \det( \pi_* \Oo_X) \otimes \det( \pi_* \Oo_X)^{-1} \xrightarrow{{}_\sim} \Oo_Y.
\]
Consider now the determinant map $\det\varphi$. Since $\det(\pi_*\Oo_X)$ is also a pure sheaf, applying the universal property of the torsion-free quotient together with Lemma \ref{TorsionVsDoubleDual} we obtain the following commutative diagram:
\[
\begin{tikzcd}
\bigwedge^n\big( \pi_* \Ii \big)  \arrow{r}{\det\varphi} \arrow{d}{\alpha}  & \det(\pi_*\Oo_X)  \\
\left(\bigwedge^n\big( \pi_* \Ii \big)\right)^{\omega\omega} \arrow{ru}[swap]{\beta} & {}
\end{tikzcd}
\]
The canonical map $\alpha$ is surjective by Lemma \ref{TorsionVsDoubleDual}. Since $\Ii$ is locally free of rank 1 at the generic points of $X$, both $\det\phi$ and $\alpha$ are generically isomorphisms; hence, the map $\beta$ is generically an isomorphism. Since its domain $\left(\bigwedge^n\big( \pi_* \Ii \big)\right)^{\omega\omega} $ is pure, we conclude that its kernel is zero, hence $\beta$ is injective. We have then factorized $\det\varphi$ as the composition of a surjective map $\alpha$ followed by an injective map $\beta$. 

Tensoring by $\det (\pi_* \Oo_X)^{-1}$, we obtain then the following commutative diagram:
\[
\begin{tikzcd}
\bigwedge^n\big( \pi_* \Ii \big) \otimes \det (\pi_* \Oo_X)^{-1} \arrow{r}{\det\varphi \otimes 1} \arrow{d}{\alpha \otimes 1}  & \det(\pi_*\Oo_X) \otimes \det (\pi_* \Oo_X)^{-1} \arrow{r}{\sim}[swap]{\eta} & \Oo_Y \\
\left(\bigwedge^n\big( \pi_* \Ii \big)\right)^{\omega\omega} \otimes \det (\pi_* \Oo_X)^{-1} \arrow{ru}[swap]{\beta \otimes 1} & {} & {}
\end{tikzcd}
\]
The map $\alpha \otimes 1$ is surjective, so the composition $\eta \circ (\beta \otimes 1)$ is an injective map whose image in $\Oo_Y$ is equal to the image of the map $\eta \circ (\det\varphi \otimes 1)$. By the previous remark, such image coincides with the $0$-th Fitting ideal of $\pi_*( \Oo_X / \Ii )$, proving the lemma.
\end{proof}

We are now ready to give a sheaf-theoretic formula for $[\pi_*]$. Recall that, for any curve $X$, the set $\GPic(X)$ can be seen as the set of isomorphism classes of generalized line bundles on on $X$.

\begin{prop}{\normalfont (Formula for the direct image of generalized line bundles) } \label{GeneralizedDetFormula}
Let $\Ll$ be a generalized line bundle on $X$. Then, \[
[\pi_*](\Ll) \simeq  \left(\textstyle{\bigwedge^n}\big( \pi_* \Ll \big)\right)^{\omega\omega} \otimes \det( \pi_* \Oo_X)^{-1}  .
\]
\end{prop}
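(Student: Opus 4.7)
The plan is to reduce to the effective case handled by Lemma \ref{FormulaForEffective} and then to control the remaining twist via a cocycle comparison that invokes Proposition \ref{DetFormula}.

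First I would pick a generalized divisor $D$ whose fractional ideal represents $\Ll$ and use Lemma \ref{DifferenceOfEffective} to write $D = D' - E$, with $D'$ effective and $E$ effective Cartier; then $\Ii_{D'} \simeq \Ll \otimes \Ii_E$ with $\Ii_E$ an honest invertible sheaf on $X$, and by Lemma \ref{linear} the divisor $\pi_*(E)$ is still Cartier. Definition-Lemma \ref{DirectImageDef} then identifies the class $[\pi_*](\Ll) \in \GPic(Y)$ with the fractional ideal $\Ii_{\pi_*(D')} \otimes \Ii_{\pi_*(E)}^{-1}$. Applying Lemma \ref{FormulaForEffective} to $D'$ gives
$$\Ii_{\pi_*(D')} \simeq \bigl(\textstyle{\bigwedge^n}\, \pi_*(\Ll \otimes \Ii_E)\bigr)^{\omega\omega} \otimes \det(\pi_*\Oo_X)^{-1},$$
while applied to the Cartier divisor $E$ the double $\omega$-dual is unnecessary, since $\pi_*\Ii_E$ is locally free of rank $n$ by Lemma \ref{TrivialOnSaturated}, and yields $\Ii_{\pi_*(E)} \simeq \det(\pi_*\Ii_E) \otimes \det(\pi_*\Oo_X)^{-1}$. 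Substituting and using Proposition \ref{DetFormula} to identify $\det(\pi_*\Ii_E) \otimes \det(\pi_*\Oo_X)^{-1}$ with $\Nm_\pi(\Ii_E)$, the statement reduces to the single isomorphism
$$\bigl(\textstyle{\bigwedge^n}\, \pi_*(\Ll \otimes \Ii_E)\bigr)^{\omega\omega} \simeq \bigl(\textstyle{\bigwedge^n}\, \pi_*\Ll\bigr)^{\omega\omega} \otimes \Nm_\pi(\Ii_E).$$

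The main step, and the technical core of the proof, is establishing this twist-by-norm identity for an honest line bundle $\Ii_E$ on $X$. I would prove it by a direct cocycle comparison. By Lemma \ref{TrivialOnSaturated}, possibly after refinement, one can choose an open cover $\{V_i\}$ of $Y$ on which $\pi_*\Oo_X$ is free and on which $\Ii_E|_{\pi^{-1}(V_i)}$ is generated by a single non-zero-divisor $h_i$. Multiplication by $h_i$ inside $\pi_*\Kk_X$ is an $\Oo_Y|_{V_i}$-linear isomorphism $\pi_*\Ll|_{V_i} \xrightarrow{\sim} \pi_*(\Ll \otimes \Ii_E)|_{V_i}$; taking $\bigwedge^n$ and the double $\omega$-dual produces local isomorphisms
$$\psi_i : \bigl(\textstyle{\bigwedge^n}\, \pi_*\Ll\bigr)^{\omega\omega}\big|_{V_i} \xrightarrow{\ \sim\ } \bigl(\textstyle{\bigwedge^n}\, \pi_*(\Ll \otimes \Ii_E)\bigr)^{\omega\omega}\big|_{V_i}.$$
On overlaps one has $h_i = u_{ij}\, h_j$ for units $u_{ij}$ of $\pi_*\Oo_X$ on $V_i \cap V_j$; a direct computation shows that the comparison $\psi_i \circ \psi_j^{-1}$ is multiplication by $\bigwedge^n(\cdot u_{ij}) = \Nm_{Y/X}(u_{ij})$ in the sense of Definition \ref{NormOfSheaves}, where the equality uses that $\pi_*\Oo_X$ is locally free of rank $n$ so that $\bigwedge^n$ of multiplication by a unit is its determinant. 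Since $\{u_{ij}\}$ is the defining cocycle of $\Ii_E$ with respect to $\{\pi^{-1}(V_i)\}$, the cocycle $\{\Nm_{Y/X}(u_{ij})\}$ is precisely the one defining $\Nm_\pi(\Ii_E)$, so the local isomorphisms $\psi_i$ glue, up to a twist by $\Nm_\pi(\Ii_E)$, into the desired global isomorphism.

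I expect the only delicate part to be the bookkeeping of the cocycle in this last step: one must verify that the direction of the transition between the $\psi_i$'s, and the determinant-versus-norm identification on the rank-$n$ free modules, are consistent with the conventions of Definition \ref{NormOfSheaves}, so that the twist assembles into $\Nm_\pi(\Ii_E)$ rather than its dual. Everything else—the decomposition $D = D' - E$, the independence of the right-hand side on its choice (which is guaranteed a posteriori by Lemma \ref{DirectImageForGPic}), and the algebraic rearrangement of the $\det(\pi_*\Oo_X)^{\pm 1}$ factors—is formal.
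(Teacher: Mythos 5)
Your proposal is correct and follows the same skeleton as the paper's proof: represent $\Ll$ by a divisor, split it as $D'-E$ via Lemma \ref{DifferenceOfEffective}, apply Lemma \ref{FormulaForEffective} to both effective pieces, and then establish the remaining twist isomorphism by gluing local isomorphisms coming from local generators of the Cartier ideal. The only real divergence is in that last step. The paper arranges the identity as $\left(\textstyle\bigwedge^n\pi_*(\Ii\cdot\Jj)\right)^{\omega\omega}\otimes\det(\pi_*\Oo_X)\simeq\left(\textstyle\bigwedge^n\pi_*\Ii\right)^{\omega\omega}\otimes\textstyle\bigwedge^n(\pi_*\Jj)$, so that each local isomorphism has the shape $(\cdot\lambda_i)\otimes(\cdot\lambda_i^{-1})$ and the transition obstructions cancel between the two tensor factors; no identification of the gluing cocycle with the Norm cocycle is ever needed. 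You instead isolate the twist as $\Nm_\pi(\Ii_E)$ and compute the cocycle of the $\psi_i$'s explicitly, which is why you must import Proposition \ref{DetFormula} and worry about the direction conventions of Definition \ref{NormOfSheaves}; this is extra bookkeeping the paper avoids, but it is not wrong. One justification does need tightening: you claim $\textstyle\bigwedge^n(\cdot u_{ij})=\Nm_{Y/X}(u_{ij})$ ``because $\pi_*\Oo_X$ is locally free of rank $n$'', but the operator acts on $\textstyle\bigwedge^n\pi_*\Ll$, and for a generalized line bundle $\Ll$ the sheaf $\pi_*\Ll$ is in general \emph{not} an invertible $\pi_*\Oo_X$-module (Lemma \ref{TrivialOnSaturated} only covers honest line bundles), so the determinant computation does not apply verbatim. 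The identity does hold after passing to the double $\omega$-dual: it holds at the generic points of $Y$, where $\Ll\simeq\Oo_X$, and two endomorphisms of a pure sheaf that agree generically agree everywhere. With that repair the argument is complete.
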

\begin{proof}
By the surjectivity of $\GDiv(X) \twoheadrightarrow \GPic(X)$, we can pick a generalized divisor $D \in \GDiv(X)$ with fractional ideal $\Ii$ isomorphic to $\Ll$; then, \[
[\pi_*](\Ll) = [\pi_*]([D]) = [\pi_*(D)] 
\] by Definition \ref{DirectImageForGPic}. By Lemma \ref{DifferenceOfEffective}, there are effective generalized divisors $E, F$ on $X$ such that $D=E-F$ and $F$ Cartier. Denote with $\Ii'$ the ideal sheaf of $E$ and with $\Jj$ the ideal sheaf of $F$. Since $F$ is Cartier, the condition $D=E-F$ can be rewritten as $E=D+F$, or in terms of sheaves: \[ \Ii' = \Ii \cdot \Jj . \]
Consider the direct images of $E$ and $F$. By Lemma \ref{FormulaForEffective}, the ideal sheaf of $\pi_*(E)$ is isomorphic to \[\left(\textstyle{\bigwedge^n}\big( \pi_* \Ii' \big)\right)^{\omega\omega} \otimes \det( \pi_* \Oo_X)^{-1},\] while the ideal sheaf of $\pi_*(F)$ is isomorphic to \[\left(\textstyle{\bigwedge^n}\big( \pi_* \Jj \big)\right)^{\omega\omega} \otimes \det( \pi_* \Oo_X)^{-1} \simeq \textstyle{\bigwedge^n}\big( \pi_* \Jj \big)\otimes \det( \pi_* \Oo_X)^{-1}. \]
By Definition \ref{DirectImageDef}, $\pi_*(D)=\pi_*(E)-\pi_*(F)$. Then, the fractional ideal of $\pi_*(D)$ is isomorphic to: \begin{align*}
\left(\textstyle{\bigwedge^n}\big( \pi_* \Ii' \big)\right)^{\omega\omega} \otimes \det( \pi_* \Oo_X)^{-1} \otimes \left(\textstyle{\bigwedge^n}\big( \pi_* \Jj \big)\right)^{-1} \otimes \det( \pi_* \Oo_X),
\end{align*}
which in turn is isomorphic to:  \[ \left(\textstyle{\bigwedge^n}\big( \pi_* \Ii' \big)\right)^{\omega\omega}  \otimes \left(\textstyle{\bigwedge^n}\big( \pi_* \Jj \big)\right)^{-1}.\]
Then, we are left to prove that: \[
\left(\textstyle{\bigwedge^n}\big( \pi_* \Ii' \big)\right)^{\omega\omega}  \otimes 
\left(\textstyle{\bigwedge^n}\big( \pi_* \Jj \big)\right)^{-1} \simeq
\left(\textstyle{\bigwedge^n}\big( \pi_* \Ii \big)\right)^{\omega\omega} \otimes \det( \pi_* \Oo_X)^{-1},
\]
or, equivalently, that: \[
\left(\textstyle{\bigwedge^n}\big( \pi_* \Ii' \big)\right)^{\omega\omega}  \otimes  \det( \pi_* \Oo_X) \simeq
\left(\textstyle{\bigwedge^n}\big( \pi_* \Ii \big)\right)^{\omega\omega} \otimes 
\left(\textstyle{\bigwedge^n}\big( \pi_* \Jj \big)\right),
\]
under the assumptions $\Ii' = \Ii \cdot \Jj$ and $\Jj$ locally principal. Consider an open cover $\{V_i\}_{i \in I}$ of $Y$ such that $\Jj$ is trivial on each $U_i = \pi^{-1}(V_i)$, i.e. there is an isomorphism $\lambda_i: \Jj_{|U_i} = (f_i) \xrightarrow{{}_\sim} \Oo_{X|U_i}$ 
for each $i \in I$. On the intersections $U_i \cap U_j$, the collection $\{\lambda_i \circ \lambda_j^{-1}\}$ of automorphism of  ${\Oo_X}_{|U_i \cap U_j}$ is a cochain of sections of ${\Oo_X^*}_{|U_i \cap U_j}$ that measures the obstruction for the $\lambda_i$'s to glue to a global isomorphism. We define now an isomorphism \[
\alpha: \left(\textstyle{\bigwedge^n}\big( \pi_* \Ii' \big)\right)^{\omega\omega}  \otimes  \det( \pi_* \Oo_X) \longrightarrow
\left(\textstyle{\bigwedge^n}\big( \pi_* \Ii \big)\right)^{\omega\omega} \otimes 
\left(\textstyle{\bigwedge^n}\big( \pi_* \Jj \big)\right)
\] by glueing a collection of isomorphisms $\alpha_i$ defined on each $V_i$. To do so, we define each $\alpha_i$ as the following composition of arrows:
\vspace{1em}

\begin{tikzcd}
\left(\left(\bigwedge^n\big( \pi_* \Ii' \big)\right)^{\omega\omega} \otimes \det(\pi_*\Oo_X)\right)_{|V_i} \arrow{r}{\alpha_i} \arrow{d}[phantom, anchor=center,rotate=-90,yshift=-1ex]{=}  &  \left( \left( \bigwedge^n \big( \pi_* \Ii \big) \right)^{\omega\omega} \otimes \bigwedge^n\big( \pi_* \Jj \big)\right)_{|V_i} \\
\left(\bigwedge^n\big( \pi_* \Ii'_{|U_i} \big)\right)^{\omega\omega} \otimes \det(\pi_*\Oo_{X|U_i}) \arrow{d}[swap]{\left(\bigwedge^n (\pi_*\lambda_i)\right)^{\omega\omega} \otimes \id }  & \left(\bigwedge^n \big( \pi_* \Ii_{|U_i} \big) \right)^{\omega\omega} \otimes \left( \bigwedge^n\big( \pi_* \Jj_{|U_i} \big) \right)^{\omega\omega} \arrow{u}[anchor=center,rotate=-90,yshift=1ex]{\simeq}  \\
\left(\bigwedge^n\big( \pi_* \Ii_{|U_i} \big)\right)^{\omega\omega} \otimes \det(\pi_*\Oo_{X|U_i}) \arrow[r, phantom, "\simeq"] &
 \left(\bigwedge^n\big( \pi_* \Ii_{|U_i} \big)\right)^{\omega\omega} \otimes \left(\bigwedge^n\big(\pi_*\Oo_{X|U_i} \big)\right)^{\omega\omega}
 \arrow{u}[swap]{\id \otimes \left(\bigwedge^n (\pi_* \lambda_i^{-1})\right)^{\omega\omega}} 
\end{tikzcd}
\vspace{1em}

\noindent i.e. $\alpha_i := \left(\bigwedge^n\big(\pi_*\lambda_i\big)\right)^{\omega\omega} \otimes \left( \bigwedge^n \big(\pi_* \lambda_i^{-1}\big) \right)^{\omega\omega}$. Since $\bigwedge^n\big(\pi_*\_ \big)$ and $(\_)^{\omega\omega}$ are functorial, the obstruction $\alpha_i \circ \alpha_j^{-1}$ is trivial on any $V_i \cap V_j$, whence the $\alpha_i$'s glue together to a global isomorphism $\alpha$.
\end{proof}
\begin{cor}\label{NmVsPiStar} Let $\Ll$ be a line bundle on $X$. Then, \[
\Nm_\pi(\Ll) = [\pi_*](\Ll).
\]
\end{cor}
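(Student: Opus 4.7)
The plan is to combine the formula for $[\pi_*](\Ll)$ coming from Proposition \ref{GeneralizedDetFormula} with the determinantal formula for $\Nm_\pi(\Ll)$ coming from Proposition \ref{DetFormula}, and observe that when $\Ll$ is honestly a line bundle the double $\omega$-dual operation is the identity on the $n$-th exterior power of $\pi_*\Ll$.

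First, since $\Ll$ is an invertible $\Oo_X$-module and $\pi$ is finite and flat of degree $n$, Lemma \ref{TrivialOnSaturated} ensures that $\pi_*\Ll$ is locally free of rank $n$ as an $\Oo_Y$-module. Consequently $\bigwedge^n(\pi_*\Ll) = \det(\pi_*\Ll)$ is an honest line bundle on $Y$. In particular it is locally free of rank one, hence torsion-free (pure of dimension $1$), and therefore $\omega$-reflexive by \cite[Proposition 1.6]{Har07} (recalled in the proof of Lemma \ref{TorsionVsDoubleDual}); explicitly, the canonical map
\[
\alpha\bigl(\textstyle{\bigwedge^n}(\pi_*\Ll)\bigr)\colon \textstyle{\bigwedge^n}(\pi_*\Ll) \longrightarrow \bigl(\textstyle{\bigwedge^n}(\pi_*\Ll)\bigr)^{\omega\omega}
\]
is an isomorphism.

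Using this identification in Proposition \ref{GeneralizedDetFormula} we obtain
\[
[\pi_*](\Ll) \simeq \bigl(\textstyle{\bigwedge^n}(\pi_*\Ll)\bigr)^{\omega\omega} \otimes \det(\pi_*\Oo_X)^{-1} \simeq \det(\pi_*\Ll) \otimes \det(\pi_*\Oo_X)^{-1},
\]
and the right-hand side is precisely $\Nm_\pi(\Ll)$ by Proposition \ref{DetFormula}. This concludes the proof.

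There is essentially no obstacle: the nontrivial content has already been established in Propositions \ref{DetFormula} and \ref{GeneralizedDetFormula}, and the only verification needed here is that, for a genuine line bundle, the double $\omega$-dual in the general formula collapses to the ordinary determinant. The corollary simply records that the new notion of direct image for generalized line bundles restricts, on the locally free locus, to the classical Norm map.
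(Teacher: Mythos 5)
Your proof is correct and follows essentially the same route as the paper: combine Proposition \ref{DetFormula} with Proposition \ref{GeneralizedDetFormula} and observe that, since $\Ll$ is locally free, $\det(\pi_*\Ll)$ is a line bundle, hence pure, hence canonically isomorphic to its double $\omega$-dual. Your extra citation of Lemma \ref{TrivialOnSaturated} to justify that $\pi_*\Ll$ is locally free of rank $n$ is a slightly more explicit version of the same step.
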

\begin{proof}
By Lemma \ref{DetFormula}, \[
\Nm_\pi(\Ll) \simeq \det(\pi_* \Ll) \otimes \det(\pi_* \Oo_X)^{-1}.
\]
On the other side, by Proposition \ref{GeneralizedDetFormula}, \[
[\pi_*](\Ll) \simeq \left( \det(\pi_* \Ll)\right)^{\omega\omega} \otimes \det(\pi_* \Oo_X)^{-1}.
\]
Since $\Ll$ is locally free, $\det(\pi_* \Ll)$ is a line bundle and in particular is a pure coherent sheaf. Then, $\det(\pi_* \Ll) \simeq \left(\det(\pi_* \Ll)\right)^{\omega\omega}$, proving the thesis.
\end{proof}

\begin{cor}
	Let $\Ll$ be a generalized line bundle on $X$. Suppose that $Y$ is smooth.  Then: \[
	[\pi_*](\Ll) \simeq \textstyle{\bigwedge^n}\big( \pi_* \Ll \big) \otimes \det( \pi_* \Oo_X)^{-1}.
	\]
\end{cor}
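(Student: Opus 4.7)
The starting point is Proposition \ref{GeneralizedDetFormula}, which gives
\[
[\pi_*](\Ll) \simeq \left(\textstyle{\bigwedge^n}(\pi_* \Ll)\right)^{\omega\omega} \otimes \det(\pi_* \Oo_X)^{-1}.
\]
So the whole task reduces to showing that under the smoothness hypothesis on $Y$, the sheaf $\bigwedge^n(\pi_*\Ll)$ is already $\omega$-reflexive, i.e. the canonical map $\alpha\colon \bigwedge^n(\pi_*\Ll) \to \left(\bigwedge^n(\pi_*\Ll)\right)^{\omega\omega}$ is an isomorphism. Since by Lemma \ref{TorsionVsDoubleDual} the double $\omega$-dual computes the torsion-free quotient for any coherent sheaf generically locally free of rank one, it suffices to prove that $\bigwedge^n(\pi_*\Ll)$ is a \emph{line bundle} on $Y$, in which case it is automatically pure and thus isomorphic to its double $\omega$-dual.

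The plan is to prove that $\pi_*\Ll$ itself is locally free of rank $n$ on $Y$, and then take the top exterior power. First I would check that $\pi_*\Ll$ is torsion-free on $Y$: any local section $s$ of $\pi_*\Ll$ whose support in $Y$ is zero-dimensional has support in $X$ equal to $\pi^{-1}(\Supp_Y s)$, which is zero-dimensional because $\pi$ is finite; but $\Ll$ is torsion-free on $X$ (being a generalized line bundle, hence pure of dimension~1), so $s=0$. Since $Y$ is smooth of pure dimension~1, torsion-free coherent sheaves on $Y$ are locally free, hence $\pi_*\Ll$ is locally free. Its rank is computed at any generic point $\eta$ of $Y$: the fibre $\pi^{-1}(\eta)$ is a finite set of generic points $\xi_1,\dots,\xi_r$ of $X$, and flatness of $\pi$ together with the fact that $\Ll_{\xi_i}\simeq \Oo_{X,\xi_i}$ forces $(\pi_*\Ll)_\eta$ to be free of rank $n$ over $\Oo_{Y,\eta}$. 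Thus $\pi_*\Ll$ is locally free of rank $n$.

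Taking the $n$-th exterior power then yields a line bundle $\bigwedge^n(\pi_*\Ll)$ on $Y$. Any line bundle is $\omega$-reflexive, so the canonical map $\alpha$ is an isomorphism, i.e.
\[
\left(\textstyle{\bigwedge^n}(\pi_*\Ll)\right)^{\omega\omega} \simeq \textstyle{\bigwedge^n}(\pi_*\Ll).
\]
Substituting this into the formula from Proposition \ref{GeneralizedDetFormula} gives the statement. The only genuinely delicate point — which I would highlight as the main step — is the torsion-freeness of $\pi_*\Ll$ on $Y$; everything else is a formal consequence of smoothness of $Y$ (which converts torsion-freeness into local freeness) and of the fact that line bundles equal their double $\omega$-dual. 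This is completely parallel to the argument used in Corollary \ref{NmVsPiStar}, with the role of local freeness of $\Ll$ on $X$ replaced by local freeness of $\pi_*\Ll$ on $Y$, which is provided by the smoothness hypothesis.
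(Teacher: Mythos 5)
Your proposal is correct and follows essentially the same route as the paper: apply Proposition \ref{GeneralizedDetFormula}, observe that $\pi_*\Ll$ is pure and hence (by smoothness of $Y$) locally free of rank $n$, so that $\bigwedge^n(\pi_*\Ll)$ is a line bundle and therefore equals its own double $\omega$-dual. The only difference is that you spell out the purity of $\pi_*\Ll$ and the rank computation, which the paper takes as known (citing Huybrechts--Lehn for the fact that pure sheaves on smooth curves are locally free).
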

\begin{proof}
	First note that, by Proposition \ref{GeneralizedDetFormula}, \[
	[\pi_*](\Ll) \simeq \left(\textstyle{\bigwedge^n}\big( \pi_* \Ll \big)\right)^{\omega\omega} \otimes \det( \pi_* \Oo_X)^{-1}  . \]
	Second, observe that the pure sheaf $\pi_* \Ll$ is locally free of rank $n$ since $Y$ is smooth (see \cite[Example 1.1.16]{Huy}), and $\textstyle{\bigwedge^n}\big( \pi_* \Ll \big)$ is a line bundle. Then, \[
	\left(\textstyle{\bigwedge^n}\big( \pi_* \Ll \big)\right)^{\omega\omega} \simeq \textstyle{\bigwedge^n}\big( \pi_* \Ll \big).
	\]
	Combining  these two facts, we have proved the thesis.
\end{proof}

\subsection{The inverse image}
In this subsection, we define the inverse image for generalized divisors and generalized line bundles and we study the relation of the inverse image with the direct image. We start from the case of effective divisors.

\begin{deflemma} {(Inverse image of an effective generalized divisor)}\label{InverseImageEffDef}
	Let $D \in \GDiv^+(Y)$ be an effective generalized divisor with ideal sheaf $\Ii \subseteq \Oo_Y$. The \textit{inverse image of $D$ relative to $\pi$}, denoted $\pi^*(D)$, is the effective generalized divisor with ideal sheaf $\pi^{-1}(\Ii) \cdot \Oo_X$.
\end{deflemma}
\begin{proof}
The	inverse image ideal $\pi^{-1}(\Ii)$ is an ideal sheaf of $\pi^{-1}(\Oo_Y)$-modules and can be extended to $\Oo_X$-modules via the canonical map of sheaves of rings $\pi^\sharp: \pi^{-1}(\Oo_Y) \rightarrow \Oo_X$. It is coherent since $\Ii$ is coherent. If $\eta$ is a generic point of $X$, then $\pi(\eta)$ is a generic point of $Y$, hence \[
(\pi^{-1}(\Ii)\cdot \Oo_X)_\eta =  \Ii_{\pi(\eta)} \cdot \Oo_{X,\eta} = \Oo_{X, \eta}
\] since $\Ii_{\pi(\eta)} = \Oo_{Y, \pi(\eta)}$.
\end{proof}

\begin{rmk}\label{InverseImageVsPullback}
In the setting of Definition \ref{InverseImageEffDef}, consider the short exact sequence:
	\[ 0 \rightarrow \Ii \rightarrow \Oo_Y \rightarrow \Oo_D \rightarrow 0. \]
	Since $\pi$ is flat and surjective, the pullback functor $\_ \otimes_{\pi^{-1}\Oo_Y}\Oo_X$ is exact as well as the inverse image functor $\pi^{-1}$.  Then, the previous exact sequence induces the following exact sequence:
	\[ 0 \rightarrow \pi^* \Ii \rightarrow \pi^*\Oo_Y \rightarrow \pi^*\Oo_D \rightarrow 0. \]
	Since $\pi^* \Oo_Y = \Oo_X$, the pullback sheaf $\pi^*(\Ii)$ has a canonical injection in $\Oo_X$, whose image is exactly the inverse image ideal $\pi^{-1}(\Ii) \cdot \Oo_X$ 
\end{rmk}

\begin{lemma}\label{InverseLinear}
	Let $D \in \GDiv^+(Y)$ be a generalized effective divisor and $E \in \CDiv^+(Y)$ be an effective Cartier divisor on $Y$. Then, the inverse image divisor $\pi^*(E)$ is Cartier and $\pi^*(D+E) = \pi^*(D)+\pi^*(E)$.
\end{lemma}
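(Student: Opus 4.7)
My plan is to verify both statements by a direct, local computation on ideal sheaves, exploiting the fact that $\pi$ is finite and flat (hence $\pi^\sharp : \pi^{-1}\Oo_Y \to \Oo_X$ sends nonzerodivisors to nonzerodivisors) and that the inverse image functor $\pi^{-1}$ commutes with products of ideals.

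For the first claim, that $\pi^*(E)$ is Cartier: I would let $\Jj \subseteq \Oo_Y$ denote the invertible ideal sheaf of $E$ and choose an open cover $\{V_\alpha\}$ of $Y$ trivializing $\Jj$, so that on each $V_\alpha$ we have $\Jj_{|V_\alpha} = (g_\alpha)$ for some nonzerodivisor $g_\alpha \in \Gamma(V_\alpha,\Oo_Y)$. By Definition/Lemma \ref{InverseImageEffDef}, the ideal sheaf of $\pi^*(E)$ restricted to $U_\alpha := \pi^{-1}(V_\alpha)$ is generated by $\pi^\sharp(g_\alpha) \in \Gamma(U_\alpha,\Oo_X)$. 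Since $\pi$ is flat, tensoring the injection $\Oo_Y \xrightarrow{\cdot g_\alpha} \Oo_Y$ with the flat sheaf of algebras $\pi_*\Oo_X$ preserves injectivity, so $\pi^\sharp(g_\alpha)$ is a nonzerodivisor in $\Oo_X$ on $U_\alpha$. Hence the ideal of $\pi^*(E)$ is locally principal and generated by a nonzerodivisor, i.e.\ invertible, so $\pi^*(E)$ is Cartier.

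For the second claim, I would compute the ideal sheaf of $\pi^*(D+E)$ and show it equals the product of the ideal sheaves of $\pi^*(D)$ and $\pi^*(E)$ (which by the earlier discussion is exactly the ideal sheaf of $\pi^*(D)+\pi^*(E)$, noting that the sum is defined for any pair of generalized divisors). Letting $\Ii,\Jj$ be the ideal sheaves of $D,E$, the ideal sheaf of $D+E$ is $\Ii \cdot \Jj$. The key identity is
\[
\pi^{-1}(\Ii \cdot \Jj) \cdot \Oo_X \;=\; \bigl(\pi^{-1}\Ii \cdot \Oo_X\bigr) \cdot \bigl(\pi^{-1}\Jj \cdot \Oo_X\bigr),
\]
which follows because $\pi^{-1}$ is a stalkwise colimit and so commutes with products of ideals (one checks equality on stalks: at $x \in X$, both sides are the $\Oo_{X,x}$-ideal generated by products of sections of $\Ii_{\pi(x)}$ and $\Jj_{\pi(x)}$), and because extension of scalars along $\pi^{-1}\Oo_Y \to \Oo_X$ distributes over products of ideals.

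The proof is essentially formal, so I do not expect a serious obstacle; the only thing to be careful about is the first part, where one must verify that flatness of $\pi$ is what promotes $\pi^\sharp(g_\alpha)$ from a section of $\Oo_X$ to a nonzerodivisor (so that the resulting fractional ideal is genuinely invertible, not just locally cyclic). Once this is in place, the remaining verification is a formal manipulation of ideal sheaves, patched across a suitable affine cover.
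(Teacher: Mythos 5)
Your proposal is correct and follows essentially the same route as the paper: local principality of $\Jj$ is transported to $\pi^{-1}(\Jj)\cdot\Oo_X$, and additivity reduces to the identity $\pi^{-1}(\Ii\cdot\Jj)\cdot\Oo_X = (\pi^{-1}\Ii\cdot\Oo_X)\cdot(\pi^{-1}\Jj\cdot\Oo_X)$. The only difference is that you spell out, via flatness of $\pi$, why the local generator $\pi^\sharp(g_\alpha)$ remains a nonzerodivisor — a point the paper leaves implicit (it is also covered by the nondegeneracy already established in Definition/Lemma \ref{InverseImageEffDef}).
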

\begin{proof}
Let $\Ii$ and $\Jj$ be the ideal sheaves of $D$ and $E$ respectively. The ideal sheaf $\Jj$ is locally principal, hence  its inverse image $\pi^{-1}(\Jj) \cdot \Oo_X$ is again locally principal; then, $\pi^*(E)$ is Cartier. The generalized divisor $D+E$ is defined by the ideal sheaf $\Ii \cdot \Jj$, whose inverse image is: \begin{align*}
\pi^{-1}(\Ii \cdot \Jj) \cdot \Oo_X &= (\pi^{-1}(\Ii) \cdot \pi^{-1}(\Jj)) \cdot \Oo_X = \\ 
&=(\pi^{-1}(\Ii) \cdot \Oo_X) \cdot (\pi^{-1}(\Jj) \cdot \Oo_X),
\end{align*}
which is the defining ideal of $\pi^*(D) + \pi^*(E)$.
\end{proof}

Now, we can extend the definition of inverse image to any generalized divisor and study its properties.

\begin{deflemma} {(Inverse image of a generalized divisor)}\label{InverseImageDef}
Let $D \in \GDiv(Y)$ be any generalized divisor and let $D=E-F$ with $E,F$ effective generalized divisors and $F$ Cartier by Lemma \ref{DifferenceOfEffective}. The \textit{inverse image of $D$ relative to $\pi$}, denoted $\pi^*(D)$, is the generalized divisor $\pi^*(E)-\pi^*(F)$.
\end{deflemma}
\begin{proof}
To prove that it is well defined, let $D=D'-E=\widetilde{D}'-\widetilde{E}$, with $D', \widetilde{D}'$ effective and $E, \widetilde{E}$ effective Cartier. 
Since $E, \widetilde{E}$ are Cartier, $D'+ \widetilde{E}=\widetilde{D}' +E$; then, by Lemma \ref{InverseLinear} we have:
	\[\pi^*(D') + \pi^*(\widetilde{E}) = \pi^*(\widetilde{D}') + \pi^*(E).\]
Again by Lemma \ref{InverseLinear} $\pi^*(E)$ and $\pi^*(\widetilde{E})$ are Cartier, so they can be subtracted from each side in order to obtain:
\[ \pi^*(D') - \pi^*(E) = \pi^*(\widetilde{D}') - \pi^*(\widetilde{E}), \]
so $\pi^*(D)$ does not depend on the choice of $D'$ and $E$.
\end{proof}

\begin{prop} {\normalfont (Properties of inverse image) }\label{InverseImageProperties}
\begin{enumerate}
	\item Let $D \in \CDiv(Y)$ be a Cartier divisor on $Y$. Then, $\pi^*(D)$ is a Cartier divisor and $\pi^*(-D)=-\pi^*(D)$. Moreover, $\pi^*(D)$ coincides with $\pi^*(D)$ of Definition \ref{DirectInverseImageForCartDef}.
	\item Let $D,E \in \GDiv(Y)$ be generalized divisors, such that $E$ is Cartier. Then, $\pi^*(D+E) = \pi^*(D) + \pi^*(E)$.
	\item Let $V \subset Y$ be an open subset, and denote with $\pi_U$ the restriction of $\pi$ to $U = \pi^{-1}(V) \subset X$. Let $D \in \GDiv(Y)$ be a generalized divisor on $Y$. Then,
\[ (\pi_{U})^*(D_{|V}) = \pi^*(D)_{|U} \]
	\item Let $D, D' \in \GDiv(Y)$ be generalized divisors such that $D \sim D'$. Then, $\pi^*(D) \sim \pi^*(D')$.
\end{enumerate}
\end{prop}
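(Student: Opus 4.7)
The proof follows closely the structure of Proposition \ref{DirectImageProperties}, with the role of the norm operation replaced by the pullback $\pi^\sharp$ on local sections. I would prove the four parts in order.

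For part (1), given $D \in \CDiv(Y)$, I would apply Lemma \ref{DifferenceOfEffective} to write $D = E - F$ with $E,F$ effective and $F$ Cartier. Since $D$ is Cartier and $E = D + F$, also $E$ is Cartier; by Lemma \ref{InverseLinear} both $\pi^*(E)$ and $\pi^*(F)$ are Cartier, hence so is their difference $\pi^*(D)$. For $\pi^*(-D) = -\pi^*(D)$, observe that $-D = F - E$ is itself a difference of effective Cartier divisors, and compute $\pi^*(-D) = \pi^*(F)-\pi^*(E) = -(\pi^*(E)-\pi^*(F)) = -\pi^*(D)$. To match Definition \ref{DirectInverseImageForCartDef}, I would work on an open cover $\{V_i\}$ of $Y$ on which the fractional ideal of $D$ is locally generated by $u_i = s_i/t_i$ with $s_i,t_i$ regular sections of $\Oo_Y$; then $\pi^{-1}(s_i)\Oo_X = \pi^\sharp(s_i)\Oo_X$, so the fractional ideal of $\pi^*(E)-\pi^*(F)$ is locally generated by $\pi^\sharp(s_i)/\pi^\sharp(t_i)$, which is exactly the local generator prescribed by Definition \ref{DirectInverseImageForCartDef}.

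For part (2), mimicking the proof of Proposition \ref{DirectImageProperties}(2), I would decompose $D=D_1-D_2$ and $E=E_1-E_2$ with $D_1,D_2,E_1,E_2$ effective and $D_2,E_2$ Cartier. Then $D+E = (D_1+E_1)-(D_2+E_2)$ with $D_2+E_2$ Cartier (sum of effective Cartier divisors is effective Cartier). Applying Definition \ref{InverseImageDef} together with Lemma \ref{InverseLinear} yields
\[
\pi^*(D+E) = \pi^*(D_1+E_1) - \pi^*(D_2+E_2) = \bigl(\pi^*(D_1)-\pi^*(D_2)\bigr) + \bigl(\pi^*(E_1)-\pi^*(E_2)\bigr) = \pi^*(D)+\pi^*(E).
\]

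For part (3), on effective divisors the statement is immediate from Definition \ref{InverseImageEffDef}, since the formation of the ideal $\pi^{-1}(\Ii)\cdot\Oo_X$ commutes with restriction to the open subsets $V \subset Y$ and $U = \pi^{-1}(V) \subset X$. For a general $D = E-F$ with $E,F$ effective and $F$ Cartier, $D_{|V} = E_{|V}-F_{|V}$ is again a decomposition of this form, and the product/inverse operations on fractional ideals commute with restriction, giving the result.

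For part (4), by linearity (part (2)) it suffices to show that the inverse image of a principal divisor is principal. Let $f \in \Gamma(Y,\Kk_Y^*)$ generate $(f) \in \Prin(Y)$. I would choose an affine open cover $\{V_i\}$ of $Y$ with $f_{|V_i} = g_i/h_i$ for regular sections $g_i, h_i \in \Gamma(V_i,\Oo_Y)$, so that $(f)_{|V_i} = (g_i)-(h_i)$ is a difference of effective Cartier divisors on $V_i$. Using part (3) and the definition, on each $U_i = \pi^{-1}(V_i)$
\[
\pi^*\bigl((f)\bigr)_{|U_i} = \bigl(\pi^\sharp(g_i)\bigr) - \bigl(\pi^\sharp(h_i)\bigr) = \bigl(\pi^\sharp(g_i)/\pi^\sharp(h_i)\bigr).
\]
Since $\pi^\sharp$ is a homomorphism of sheaves of rings, the local sections $\pi^\sharp(g_i)/\pi^\sharp(h_i)$ agree on overlaps and glue to a global section $\widetilde f \in \Gamma(X,\Kk_X^*)$ with $\pi^*((f)) = (\widetilde f)$, proving that $\pi^*$ preserves principal divisors, hence linear equivalence. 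The only nontrivial step is the matching with the EGA-style definition in part (1), which is what I expect to require the most care.
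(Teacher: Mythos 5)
Your proposal is correct and follows essentially the same route as the paper's proof: the same decomposition $D=E-F$ via Lemma \ref{DifferenceOfEffective}, the same use of Lemma \ref{InverseLinear} for parts (1) and (2), and the same local-generator comparison with Definition \ref{DirectInverseImageForCartDef}. The only cosmetic difference is in part (4), where the paper passes directly to the global principal divisor $(\pi^\sharp(f))$ while you localize to $f=g_i/h_i$ and glue; both are valid, and your version makes explicit the regularity check that the paper delegates to the EGA reference.
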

\begin{proof}
	To prove (1), consider $D=E-F$ with $E,F$ effective and $F$ Cartier by Lemma \ref{DifferenceOfEffective} on $Y$. Since $D$ is Cartier and $E=D+F$, then also $E$ is Cartier. By Definition \ref{InverseImageDef}, \[
	\pi^*(D)=\pi^*(E)-\pi^*(F).
	\]
	By Lemma \ref{InverseLinear}, it is a difference of Cartier divisors and hence it is Cartier. To compute $\pi(-D)$, note that since $F$ is Cartier then $-D=F-E$, and it is a difference of effective Cartier divisors; then, apply Definition \ref{InverseImageDef} to obtain \begin{align*}
	\pi^*(-D) &= \pi^*(F)-\pi^*(E) = \\
	&= -\pi^*(D).
	\end{align*}
	To compare $\pi^*(D)$ with Definition \ref{DirectInverseImageForCartDef}, let $\Ii$ and $\Jj$ be the ideal sheaves of $E$ and $F$ and let $\{V_i\}_{i \in I}$ be an open cover of $Y$ such that $\Ii_{|V_i} and $ $\Jj_{|V_i}$ are principal ideals of $\Oo_{Y|V_i}$-modules generated by regular sections $s_i$ and $t_i$ of $\Gamma(V_i, \Oo_Y)$, respectively on each $i \in I$. The fractional ideal of $D$ is generated on each $V_i$ by the meromorphic regular section $u_i=s_i/t_i$ of $\Gamma(V_i, \Kk_Y)$. By Definition \ref{InverseImageEffDef}, the ideal sheaves of $\pi^*(E)$ and $\pi^*(F)$ are generated on each $U_i = \pi^{-1}(V_i)$ by $\pi_{U_i}^\sharp(s_i)$ and $\pi_{U_i}^\sharp(t_i)$ respectively. Then, by Definition \ref{InverseImageDef}, the fractional ideal of $\pi^*(D)$ is generated on each $U_i$ by the meromorphic regular section $\pi_{U_i}^\sharp(s_i)/\pi_{U_i}^\sharp(t_i)$ of $\Gamma(U_i, \Kk_X)$. These are exactly the local generators for $\pi^*(D)$ as defined in Definition \ref{DirectInverseImageForCartDef}.

	To prove (2), consider $D=D_1-D_2$ and $E=E_1-E_2$, with $D_1, D_2, E_1, E_2$ effective and $D_2, E_2$ Cartier. Note that $D+E=(D_1+E_1)-(D_2+E_2)$, and it is a difference of effective divisors with $D_2+E_2$ Cartier. Then, applying Definition \ref{InverseImageDef} and Lemma \ref{InverseLinear}, we obtain:
	\begin{align*}
	\pi^*(D+E) &= \pi^*((D_1 +E_1)-(D_2+E_2)) = \\
	&=\pi^*(D_1+E_1)-\pi^*(D_2+E_2)=\\
	&=\pi^*(D_1)+\pi^*(E_1)-\pi^*(D_2)-\pi^*(E_2)=\\
	&=(\pi^*(D_1)-\pi^*(D_2))-(\pi^*(E_1)-\pi^*(E_2) ) =\\
	&=\pi^*(D)+\pi^*(E).
	\end{align*}
	
	To prove (3), if $D$ is effective, the result follows the fact that the inverse image functor $\pi^{-1}$ commutes with restrictions. Then, observe that the operations of product and inverse of fractional ideals also commute with restrictions.
	
	To prove (4), let $f \in \Gamma(Y, \Kk_Y)$ be a global section that generates a principal divisor $E=(f) \in \Prin(Y)$ such that $D=D'+E$. The inverse image $\pi^*(E)$ is the principal divisor $(\pi^\sharp(f)) \in \Prin(X)$. Then, by Lemma \ref{InverseLinear}, $\pi^*(D)=\pi^*(D')+(\pi^\sharp(f))$; so $\pi^*(D)$ and $\pi^*(D')$ are linearly equivalent.
\end{proof}

We are now ready to define the inverse image for generalized line bundles.

\begin{deflemma}{(Inverse image for generalized line bundles)}\label{InverseImageForGPic}
	The inverse image for generalized divisors induce a inverse image map between the sets of generalized line bundles, defined as:
	\begin{align*}
	[\pi^*]: \hspace{2em} \GPic(Y) &\longrightarrow \GPic(X) \\
	[D] &\mapsto [\pi^*(D)].
	\end{align*}
\end{deflemma} 
\begin{proof}
	Recall that for any curve $X$, the set $\GPic(X)$ can be seen equivalently as the set of generalized line bundles or as the set of generalized divisors modulo linear equivalence. Here, $[\pi^*]$ is defined in terms of generalized divisors modulo linear equivalence. By Proposition \ref{InverseImageProperties}, the direct images of linearly equivalent divisors are linearly equivalent, hence  $[\pi^*]$ is well defined.
\end{proof}

\begin{rmk}\label{InverseImageVsPullbackBundles}
If $\Ll$ is a generalized line bundle on $Y$ and $D$ is a generalized divisor with fractional ideal $\Ii$ isomorphic to $\Ll$, then the inverse image divisor $\pi^*(D)$ has fractional ideal isomorphic to the pullback sheaf $\pi^*(\Ii)$ by Remark \ref{InverseImageVsPullback}. Since $\pi^*(\Ii)$ is isomorphic to $\pi^*(\Ll)$ as abstract $\Oo_X$-modules, we conclude that the inverse image $[\pi^*](\Ll)$ of the generalized line bundle $\Ll$ is equal to the pullback sheaf $\pi^*(\Ll)$.
\end{rmk}

We prove now a result on the composition of the direct image with the inverse image of generalized divisors.

\begin{prop}{\normalfont (Composition of the direct image with the inverse image)}\label{DirectAndInverseImage}
Let $D \in \GDiv(Y)$ be a generalized divisor on $Y$. Then, \[ \pi_*(\pi^*(D)) = n \cdot D. \]
\end{prop}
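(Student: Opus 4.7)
The plan is to reduce to the effective case and then use the projection formula together with a direct Fitting ideal computation. Using Lemma \ref{DifferenceOfEffective} I would first write $D = E - F$ with $E \in \GDiv^+(Y)$ effective and $F \in \CDiv^+(Y)$ effective Cartier. Since $\pi^*(F)$ is then Cartier (Proposition \ref{InverseImageProperties}(1)) and the direct image of generalized divisors is additive with respect to Cartier divisors (Proposition \ref{DirectImageProperties}(2)), one obtains
\[
\pi_*(\pi^*(D)) \;=\; \pi_*(\pi^*(E)) \;-\; \pi_*(\pi^*(F)).
\]
For the Cartier piece, $\pi_*(\pi^*(F)) = n\cdot F$ is exactly the composition formula already proved for Cartier divisors in Section~\ref{SectionReviewNorm} (combined with the compatibility statement Proposition \ref{DirectImageProperties}(1)). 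Since $n\cdot D$ is likewise decomposed as $n\cdot E - n\cdot F$ via $\Ii_D^n = \Ii_E^n \cdot \Ii_F^{-n}$, the claim reduces entirely to proving $\pi_*(\pi^*(E)) = n\cdot E$ for an arbitrary effective generalized divisor $E$.

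For this effective case, let $\Ii \subseteq \Oo_Y$ denote the ideal sheaf of $E$. By Definition/Lemma \ref{InverseImageEffDef} and Remark \ref{InverseImageVsPullback}, $\pi^*(E)$ has ideal sheaf $\pi^*\Ii \subseteq \Oo_X$, and the flatness of $\pi$ yields a short exact sequence
\[
0 \longrightarrow \pi^*\Ii \longrightarrow \Oo_X \longrightarrow \pi^*(\Oo_Y/\Ii) \longrightarrow 0.
\]
Pushing forward along the affine morphism $\pi$ and applying the projection formula gives
\[
\pi_*(\Oo_X/\pi^*\Ii) \;\simeq\; (\Oo_Y/\Ii) \otimes_{\Oo_Y} \pi_*\Oo_X.
\]
By definition, $\pi_*(\pi^*(E))$ is the effective generalized divisor cut out by $\Fitt_0$ of this sheaf, so the whole problem boils down to computing this Fitting ideal.

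The key and most delicate step is to show $\Fitt_0\!\big((\Oo_Y/\Ii)\otimes_{\Oo_Y}\pi_*\Oo_X\big) = \Ii^n$. I would argue locally: using Lemma \ref{TrivialOnSaturated} I may trivialize $\pi_*\Oo_X \simeq \Oo_Y^{\oplus n}$ on an open, so that the sheaf above becomes $(\Oo_Y/\Ii)^{\oplus n}$. Choosing local generators $f_1,\dots,f_r$ of $\Ii$, this admits a locally free presentation
\[
\Oo_Y^{\oplus rn} \xrightarrow{\;M\;} \Oo_Y^{\oplus n} \longrightarrow (\Oo_Y/\Ii)^{\oplus n} \longrightarrow 0
\]
where $M$ is the $n\times rn$ block-diagonal matrix with $n$ identical row-blocks $(f_1,\dots,f_r)$. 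By Definition/Lemma \ref{FittingDef}, $\Fitt_0$ is generated by the $n\times n$ minors of $M$; the only nonvanishing ones pick exactly one column from each block and equal $f_{j_1}\cdots f_{j_n}$, and these monomials generate $\Ii^n$. Since Fitting ideals are compatible with restriction, the identity $\Fitt_0(\pi_*(\Oo_X/\pi^*\Ii)) = \Ii^n$ globalizes. As $\Ii^n$ is the ideal of $n\cdot E$, this gives $\pi_*(\pi^*(E)) = n\cdot E$, and combining with the Cartier case yields $\pi_*(\pi^*(D)) = n\cdot D$. The main obstacle is thus the block-matrix minor computation: formally it is elementary, but one must be careful that the local trivialization of $\pi_*\Oo_X$ genuinely transports the ideal $\pi_*\pi^*\Ii$ to $\Ii\otimes \Oo_Y^{\oplus n}$ (which is guaranteed by the projection formula), so that the minor calculation reflects the intrinsic Fitting ideal.
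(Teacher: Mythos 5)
Your proposal is correct and follows essentially the same route as the paper: reduce to the effective case via the decomposition $D=E-F$ and additivity with respect to Cartier divisors, then compute $\Fitt_0(\pi_*(\Oo_X/\pi^*\Ii))$ locally from the block presentation matrix whose $n\times n$ minors are exactly the $n$-fold products of local generators of $\Ii$, giving $\Ii^n$. The only cosmetic differences are your use of the projection formula to identify $\pi_*(\Oo_X/\pi^*\Ii)\simeq(\Oo_Y/\Ii)\otimes\pi_*\Oo_X$ (the paper instead pushes the presentation forward directly) and your invocation of the classical Cartier composition formula for the $F$-piece, which the effective-case computation already covers.
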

\begin{proof}
Since both of the terms are linear with respect to the sum of Cartier divisor, we can suppose that $D$ is effective with ideal sheaf $\Ii \subseteq \Oo_Y$. First note that, from the exact sequence:
\[ 0 \rightarrow \pi^* \Ii \rightarrow \pi^*\Oo_Y \rightarrow \pi^*\Oo_D \rightarrow 0 \]
together with Remark \ref{InverseImageVsPullback}, we obtain $\Oo_X/(\pi^{-1}\Ii \cdot \Oo_X) = \Oo_X/\pi^*\Ii = \pi^*(\Oo_Y/\Ii)$. To prove the thesis, we show that the equality \[
\Fitt_0\big(\pi_* (\Oo_X/\pi^*\Ii)\big)= \Ii^n
\] holds locally around any point $y \in Y$. Let $V \subseteq Y$ be an open neighborhood of $y$ such that  $(\pi_* \Oo_X)_{|V} \simeq (\Oo_{Y|V})^{\oplus n}$ and $\Ii_{|V}$ is generated by sections $s_1, \dots, s_r$ of $\Gamma(V, \Ii)$. Then, consider the following presentation:
\[ \Oo_{Y|V}^{\oplus r}  \xrightarrow{(\cdot s_1, \dots, \cdot s_r)} \Oo_{Y|V} \rightarrow (\Oo_Y/\Ii)_{|V} \rightarrow 0. \]
Pulling back with $\pi^*$, we obtain the following exact sequence on $U = \pi^{-1}(V)$:
\[ \Oo_{X|U}^{\oplus r}  \xrightarrow{(\cdot s_1, \dots, \cdot s_r)} \Oo_{X|U} \rightarrow (\Oo_X/\pi^*\Ii)_{|U} \rightarrow 0. \]
In order to compute $\Fitt_0\big(\pi_* (\Oo_X/\pi^*\Ii)\big)_{|V}$, we consider then the pushforward sequence:
\[ \big(\pi_*\Oo_X^{\oplus r}\big)_{|V}  \xrightarrow{(\cdot s_1, \dots, \cdot s_r)} \big(\pi_*\Oo_X\big)_{|V} \rightarrow \pi_*(\Oo_X/\pi^*\Ii)_{|V} \rightarrow 0. \]
Since $\pi_*(\Oo_X/\pi^*\Ii)_{|V} \simeq \big(\Oo_Y^{\oplus n}\big)_{|V}$, the map on the left is represented by the following $n \times nr$ matrix with entries in $\Gamma(V, \Oo_Y)$:
\[ M= 
\left[
\begin{array}{c|c|c}
\begin{matrix}
s_1 &  &   \\
 & \ddots &  \\
&  & s_1
\end{matrix} & \dots & \begin{matrix}
s_n &  &   \\
& \ddots &  \\
&  & s_n
\end{matrix}
\end{array}
\right].
\]	
Now, $\Fitt_0\big(\pi_* (\Oo_X/\pi^*\Ii)\big)_{|V}$ is generated by the $n \times n$ minors of $M$, i.e. all the possible products of $n$ generators of $\Ii$ on $V$, with ripetitions. This shows that $\Fitt_0\big(\pi_* (\Oo_X/\pi^*\Ii)\big)_{|V} = \Ii_{|V}^n$.
\end{proof}

\subsection{Relation with the degree}\label{DegreeOfDirectImage}

In this subsection we show that $\pi_*$ preserves the degree of divisors under the condition that $Y$ is smooth over $k$. In general, however, the direct image of a generalized divisor $D$ may not have the same degree of $D$, as the following example shows. Since we are dealing with degrees, we \textbf{assume that $X$ and $Y$ are projective curves over a base field} $k$.
\vspace{1em}

% CONTROESEMPIO VASCONCELOS

\begin{ex}\label{EsempioTacnode}
Fix $k$ an algebraically closed field. Let $A=k[x,y]/(y^2-x^4)$ be the affine coordinate ring of a curve $X=\Spec A$ with a tacnode at the point $P$ corresponding to the maximal ideal $\p=(x,y)$.

\noindent The involution $\sigma$ on $A$ defined by $x \mapsto -x, y \mapsto y$ induces a involution $\sigma_X$ on the curve $X$. The geometric quotient $Y=X/\sigma_X$ is an affine curve with coordinate ring equal to the ring of invariants $A^\sigma = k[x^2,y]/(y^2-x^4)$, that is isomorphic to $B=k[s,t]/(t^2-s^2)$ putting $s \mapsto x^2 $ and $t \mapsto y$. The quotient curve $Y$ has a simple node at the point $Q$ corresponding to the maximal ideal $\q=(s,t)$.

\noindent The inclusion map $A^\sigma \subset A$ gives to $A$ the structure of free $B$-module, with basis $\{1,x\}$; so, the corresponding morphism of curve $\pi: X \rightarrow Y = X/\sigma_X$ is a  finite, locally free map of degree $2$ sending $P$ to $Q$.

\noindent Let $D$ be the generalized divisor on $X$ defined by the ideal $I=(x^2,y) \subset A$; note that $D$ is supported only on the tacnode $P$. Since we want to compare $\deg(D)= \deg_P(D)$ with $\deg(\pi_*(D)) = \deg_Q(\pi_*(D))$, we can restrict to work locally around $P$ and $Q$. Let $B_\q$ be the local ring of $Y$ at $Q$. The induced map $B_\q \rightarrow A_\p$ makes $ A_\p$ a free $B_\q$-module of rank 2.

\noindent Let $E=A_\p/I_\p \simeq k[x]/(x^2)$ be the local ring of $D$ at the $P$. We have: \[
\deg_P(D) = \ell(E) = 2.
\]

\noindent Observe that $E$ has the following free presentation as $A_\p$-module:
\[
A_\p^{\oplus 2} \xrightarrow{(\cdot x^2, \cdot y)} A_\p \rightarrow E \rightarrow 0.
\]

\noindent Since $A_\p$ is a free $B_\q$-module of rank 2, $E$ has also a presentation as $B_\q$-module:
\[
B_\q^{\oplus 4} \xrightarrow{\varphi} B_\q^{\oplus 2} \rightarrow E \rightarrow 0
\]
where \[
\varphi = \left[\begin{array}{cc|cc}
s & 0 & t & 0 \\
0 & s & 0 & t
\end{array}\right].
\]
Then, the 0-th Fitting ideal of $E$ as $B_\q$-module is $F_0(E) = (s^2, t^2, st) \subset B_\q$. We have: \[
\deg_Q(\pi_*(D)) = \ell(B_\q/F_0(E)) = 3.
\]

\vspace{1em}

\noindent Note that there are divisors of degree 2 on $X$, supported at $P$, whose direct image has degree $2$ on $Y$. For example, take $D'=(x)$.
% SPIEGARE CON PIù DETTAGLI?!

\end{ex}

\begin{rmk}
The previous example shows, in particular, that Proposition 2.33 in \cite{Vas} is false.
\end{rmk}

We now prove that the degree is preserved if the direct image is Cartier. We show first a proposition that computes the degree at any point where the direct image is locally principal.

\begin{prop}{\normalfont (Degree of the direct image of a generalized divisor)}\label{FiberDegree} 
Let $D \in \GDiv(X)$ be a  generalized divisor on $X$ and let $y \in Y$ be a point in codimension 1 of the support of  $\pi_*(D)$. Suppose that $\pi_*(D)$ is locally principal at $y$. Then,  \[
\deg_y(\pi_*(D)) = \sum_{\pi(x)=y} \deg_x(D).
\]
\end{prop}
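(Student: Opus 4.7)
The plan is to reduce to the case of effective $D$, localize at $y$, extract a length-bookkeeping identity, and reduce the proposition to a classical commutative-algebra fact about $0$-th Fitting ideals which we establish via Cramer's rule.

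\emph{Reduction and localization.} By Lemma~\ref{DifferenceOfEffective}, write $D=D'-E$ with $D',E$ effective and $E$ Cartier. By Proposition~\ref{DirectImageProperties}(1)(2) we have $\pi_*(D)=\pi_*(D')-\pi_*(E)$ with $\pi_*(E)$ Cartier, and both sides of the identity are additive in $D$ with respect to this decomposition (using Definition/Lemma~\ref{DegreeGenDivDef} for the degree); so it is enough to treat $D$ effective with ideal sheaf $\Ii\subseteq\Oo_X$, under the hypothesis that $\pi_*(D)$ is locally principal at $y$. Set $R:=\Oo_{Y,y}$, a $1$-dimensional Cohen--Macaulay local Noetherian ring (since $Y$ is pure-dimensional, hence $S_1$). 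Finiteness and flatness of $\pi$ give $A:=(\pi_*\Oo_X)_y$ free of rank $n$ over $R$, and the finiteness of the fiber yields the product decomposition $A\cong\prod_{\pi(x)=y}\Oo_{X,x}$. Writing $I:=(\pi_*\Ii)_y\cong\prod_x\Ii_x$, we have $M:=A/I\cong\prod_x\Oo_{X,x}/\Ii_x$; by Definition/Lemma~\ref{DirectImageDef}, the stalk at $y$ of the ideal of $\pi_*(D)$ is $\Fitt_0(M)\subseteq R$, which by hypothesis equals $(f)$ for some non-zerodivisor $f\in R$ (nondegeneracy of $\pi_*(D)$).

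\emph{Length bookkeeping.} By Definition/Lemma~\ref{DegreeGenDivDef}, the left-hand side is $\deg_y(\pi_*(D))=\ell_R(R/(f))\cdot[\kappa(y):k]$. For the right-hand side, the change-of-rings formula $\ell_R(N)=\ell_{\Oo_{X,x}}(N)\cdot[\kappa(x):\kappa(y)]$ (valid for any finite-length $\Oo_{X,x}$-module $N$, since $\kappa(x)$ has $R$-length $[\kappa(x):\kappa(y)]$), combined with $[\kappa(x):k]=[\kappa(x):\kappa(y)][\kappa(y):k]$, gives
\[
\sum_{\pi(x)=y}\deg_x(D)=\sum_x\ell_{\Oo_{X,x}}(\Oo_{X,x}/\Ii_x)[\kappa(x):k]=[\kappa(y):k]\cdot\ell_R(M).
\]
The proposition thus reduces to the identity $\ell_R(M)=\ell_R(R/(f))$ under the assumption $\Fitt_0(M)=(f)$ principal.

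\emph{Key identity and main obstacle.} From the presentation $R^{nr}\xrightarrow{\Phi}R^n\to M\to 0$ appearing in the proof of Lemma~\ref{linear}, the $n\times n$ minors of $\Phi$ generate $(f)$; Nakayama (applied locally) then produces an $n\times n$ submatrix $\phi_0$ of $\Phi$ with $\det\phi_0=u\cdot f$ for some unit $u$. For any remaining column $v$ of $\Phi$, Cramer's rule gives $(\phi_0^{-1}v)_i=\det(\phi_0^{(i,v)})/\det\phi_0$, where $\phi_0^{(i,v)}$ is $\phi_0$ with its $i$-th column replaced by $v$; the numerator is another $n\times n$ minor of $\Phi$, so it lies in $(f)$, and hence $(\phi_0^{-1}v)_i\in R$. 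Consequently every remaining column of $\Phi$ already lies in the image of $\phi_0$, whence $M=\coker(\phi_0)$. Invoking the classical length formula for a square $R$-linear map with non-zerodivisor determinant, $\ell_R(\coker\phi_0)=\ell_R(R/\det\phi_0)$, one obtains $\ell_R(M)=\ell_R(R/(f))$ and the proposition follows. The main obstacle is the Cramer's-rule step together with this classical length identity: these are standard but interlock precisely with the principality hypothesis on $\Fitt_0(M)$, which is exactly what fails in Example~\ref{EsempioTacnode} where $\pi_*(D)$ is not locally principal.
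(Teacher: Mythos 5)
Your argument is correct and follows the same skeleton as the paper's proof: reduce to the effective case by Lemma \ref{DifferenceOfEffective} and linearity, localize at $y$, decompose the Artinian module $M=(\pi_*(\Oo_X/\Ii))_y$ as a product over the points of the fiber, and convert lengths via $\ell_R(N)=\ell_{\Oo_{X,x}}(N)[\kappa(x):\kappa(y)]$. The one genuine difference is the key identity $\ell_R(M)=\ell_R(R/\Fitt_0(M))$ when the Fitting ideal is principal and generated by a nonzerodivisor: the paper simply cites \cite[Proposition 2.32]{Vas}, whereas you prove it from scratch --- extracting a square submatrix $\phi_0$ whose determinant generates $\Fitt_0(M)$, using Cramer's rule to show the remaining columns lie in $\operatorname{im}\phi_0$ so that $M=\coker\phi_0$, and then invoking $\ell_R(\coker\phi_0)=\ell_R(R/\det\phi_0)$. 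This self-contained verification is worth something here, since the paper itself points out (after Example \ref{EsempioTacnode}) that the adjacent Proposition 2.33 of \cite{Vas} is false, so an independent check of 2.32 adds confidence; the trade-off is that you must assume the classical determinant--length formula for square maps over a one-dimensional Noetherian local ring, which is exactly the kind of citation the paper replaces by Vasconcelos.

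One small inaccuracy to fix: the stalk $A=(\pi_*\Oo_X)_y$ is only a \emph{semilocal} ring and in general does \emph{not} decompose as $\prod_{\pi(x)=y}\Oo_{X,x}$ (e.g.\ for $t\mapsto t^2$ on $\mathbb{A}^1$ over an algebraically closed field and $y\neq 0$, the stalk is a domain with two maximal ideals, hence not a product); likewise for $I$. What you actually need, and what is true, is only the decomposition of the Artinian quotient $M=A/I\cong\prod_{x}\Oo_{X,x}/\Ii_x$ over the finitely many closed points in its support --- this is how the paper phrases it --- so the argument survives, but the claim about $A$ itself should be removed.
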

\begin{proof} %%serve da qualche parte GORENSTEIN?!
First, suppose that $D$ is effective. Let $V=\Spec(B)$ be an affine open neighborhood of $y$ with affine pre-image $U=\pi^{-1}(V)=\Spec(A)$, and let $I \subset A$ denote the ideal of $D$ restricted to $U$. The coordinate ring of $D$ on $U$ is the Artin ring  $A/I$ whose spectrum is equal to $\Spec(A) \cap \Supp(D) = \{ \p_1, \dots,  \p_s \}$; hence we have: \[ A/I = \prod_{i=1}^s (A/I)_{\p_i}. \] Let $\q \subset B$ denote the maximal ideal corresponding to $y$ in $\Spec(B)$ and let $B_\q$ be the associated local ring of dimension 1. Then, the localization of $A/I$ at $\q$, denoted $E$, is the coordinate ring of $D$ restricted to the fiber of $y$: \[
E = (A/I)_\q = \prod_{\pi^{-1}(\p_i)=\q} (A/I)_{\p_i}.
\]	

\vspace{1em}

\noindent Since $\pi_*(D)$ is effective, the degree of $\pi_*(D)$ at $y$ is: \begin{align*}
\deg_y(\pi_*(D)) &= \ell\big( \Oo_{Y,y} / F_0(\pi_* \Oo_D)_y \big)[\kappa(y):k] = \\
&= \ell\big( B_\q / F_0 (E)  \big)[\kappa(y):k].
\end{align*}
By hypothesis $F_0(E)$ is an invertible module, so by \cite[Proposition 2.32]{Vas}, we have $\ell\big( B_\q / F_0 (E) \big) = \ell(E)$.
 On the other hand, thanks to \cite[\href{https://stacks.math.columbia.edu/tag/02M0}{Tag 02M0}]{stacks-project}, 
note that: 
\begin{align*}
\ell(E) &= \ell \left( \prod_{\pi^{-1}(\p_i)=\q} (A/I)_{\p_i} \right) = \\
&= \sum_{\pi^{-1}(\p_i)=\q}\ell( A_{\p_i}/I_{\p_i} )][k(\p_i):k(\q)] = \\
&= \sum_{\pi(x)=y} \ell \big( \Oo_{X,x}/\Ii_x \big)[\kappa(x):\kappa(y)] = \\
&= \sum_{\pi(x)=y} \deg_x(D)[\kappa(x):\kappa(y)].
\end{align*}

\vspace{1em}
\noindent 
Putting everything together we have: 
\begin{align*}
\deg_y(\pi_*(D)) &= \ell(E) [\kappa(y):k] = \\
&= \left(\sum_{\pi(x)=y} \deg_x(D)[\kappa(x):\kappa(y)]\right)[\kappa(y):k]  = \\
&= \sum_{\pi(x)=y} \deg_x(D)[\kappa(x):k] = \\
&= \sum_{\pi(x)=y} \deg_x(D)
\end{align*}
for $D$ effective generalized divisor on $X$. 

\vspace{1em}

\noindent Let now $D=E-F$ be a generalized divisor, written as a difference of effective generalized divisors with $F$ Cartier by Lemma \ref{DifferenceOfEffective}. Since $\pi_*(F)$ is Cartier, using the result for effective divisors estabilished before, we have: \begin{align*}
\deg_y(\pi_*(D)) &= \deg_y(\pi_*(E)-\pi_*(F)) = \\
&= \deg_y(\pi_*(E)) - \deg_y(\pi_*(F)) = \\
&= \sum_{\pi(x)=y} \left[ \deg_x(E)-\deg_x(F) \right] = \\
&= \sum_{\pi(x)=y} \deg_x(D).
\end{align*}
\end{proof}

\begin{cor}\label{DegreeCor1}
Let $D \in \GDiv(X)$ be a generalized divisor on $X$ such that $\pi_*(D)$ is Cartier. Then, \[
\deg_Y(\pi_*(D)) = \deg_X(D).
\]
\end{cor}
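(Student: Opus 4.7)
The plan is to reduce the global equality to the pointwise formula already established in Proposition \ref{FiberDegree}, using the definition of $\deg_Y$ as a sum of local contributions.

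First I would unpack the hypothesis: by Definition/Lemma \ref{DegreeGenDivDef}, $\deg_Y(\pi_*(D))$ is the sum of $\deg_y(\pi_*(D))$ over all codimension-$1$ points $y$ of $Y$ lying in the support of $\pi_*(D)$ (only finitely many contribute). Since $\pi_*(D)$ is Cartier by hypothesis, it is locally principal at every such point $y$, so the pointwise formula from Proposition \ref{FiberDegree} applies and gives
\[
\deg_y(\pi_*(D)) \;=\; \sum_{\pi(x)=y} \deg_x(D).
\]

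Next I would globalize. Since $\pi \colon X \to Y$ is finite, it sends codimension-$1$ points to codimension-$1$ points, and the preimage of any closed point of $Y$ is a finite set of closed points of $X$. Hence the codimension-$1$ points of $X$ are partitioned according to their image under $\pi$, and the codimension-$1$ points of $Y$ whose fibers meet $\Supp(D)$ are precisely (a subset of) the points of $\Supp(\pi_*(D))$ in codimension $1$. I would then write
\begin{align*}
\deg_Y(\pi_*(D)) \;=\; \sum_{y \text{ cod } 1} \deg_y(\pi_*(D))
\;=\; \sum_{y \text{ cod } 1} \sum_{\pi(x)=y} \deg_x(D)
\;=\; \sum_{x \text{ cod } 1} \deg_x(D) \;=\; \deg_X(D),
\end{align*}
where in the second equality I apply Proposition \ref{FiberDegree} at each $y$ (using that $\pi_*(D)$ is Cartier), and in the third I use that $\{x \mid \pi(x)=y\}_{y}$ partitions the codimension-$1$ points of $X$.

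The only subtle point is making sure the double summation exhausts all codimension-$1$ points of $X$ that contribute to $\deg_X(D)$ and no spurious ones; this is immediate once one observes that any $x$ with $\deg_x(D) \neq 0$ lies in $\Supp(D)$, and then $y = \pi(x)$ lies in $\Supp(\pi_*(D))$ (since $\pi_*\Oo_D$ is nonzero at $y$, and its $0$-th Fitting ideal is a proper ideal at $y$). No other real obstacle arises: the reduction to the effective case is already handled inside Proposition \ref{FiberDegree}, and Cartier-ness of $\pi_*(D)$ is exactly the hypothesis needed to apply that proposition at every relevant point.
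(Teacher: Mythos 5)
Your proposal is correct and follows the same route as the paper: apply Definition \ref{DegreeGenDivDef} to write $\deg_Y(\pi_*(D))$ as a sum of local degrees, use Proposition \ref{FiberDegree} at each codimension-$1$ point (legitimate since $\pi_*(D)$ is Cartier everywhere), and then observe that the resulting double sum exhausts exactly the points of $X$ contributing to $\deg_X(D)$ because $\Supp(\pi_*(D))$ is the set-theoretic image of $\Supp(D)$ by the properties of the Fitting ideal. Your explicit justification of the support bookkeeping matches the paper's remark to the same effect.
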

\begin{proof}
Applying Definition \ref{DegreeGenDivDef} and Proposition \ref{FiberDegree}, we get:
\begin{align*}
\deg_Y(\pi_*(D)) &= \sum_{y \textrm{ cod 1}} \deg_y(\pi_*(D)) = \\
&= \sum_{y \textrm{ cod 1}} \sum_{\pi(x)=y} \deg_x(D).
\end{align*}
By the properties of the Fitting image, $\pi_*(D)$ is supported on the set-theoretic image of the support of $D$; hence, the $x$ appearing in the last sum are all the $x$ for which $\deg_x(D)$ is not zero. Then, the previous sum gives:
\begin{align*}
\deg_Y(\pi_*(D)) &= \sum_{x \textrm{ cod 1}} \deg_x(D) = \deg_X(D).
\end{align*}
\end{proof}

\begin{cor}\label{DegreeCor}
Suppose that $Y$ is smooth. Then, for any generalized divisor $D$ on $X$,  \[
\deg_Y(\pi_*(D)) = \deg_X(D).
\]
\end{cor}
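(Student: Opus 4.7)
The plan is to reduce this corollary directly to Corollary \ref{DegreeCor1} by showing that the smoothness of $Y$ forces $\pi_*(D)$ to be Cartier automatically, so no extra hypothesis on $D$ is needed.

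First, I would observe that on a smooth curve $Y$, every generalized divisor is Cartier. Indeed, $Y$ has pure dimension $1$, so each codimension-$1$ point $y \in Y$ corresponds to a local ring $\Oo_{Y,y}$ of Krull dimension $1$; smoothness implies that $\Oo_{Y,y}$ is regular, hence a discrete valuation ring. Every nonzero fractional ideal of a DVR is principal, so any coherent subsheaf $\Jj \subseteq \Kk_Y$ that is nondegenerate is locally principal at every codimension-$1$ point. Since the structure sheaf on a $1$-dimensional scheme is determined by its stalks at such points (together with openness of the locally-principal locus), this gives $\GDiv(Y) = \CDiv(Y)$.

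Applying this to the generalized divisor $\pi_*(D) \in \GDiv(Y)$, we conclude that $\pi_*(D)$ is Cartier for any choice of $D \in \GDiv(X)$, with no further assumption on $D$. Corollary \ref{DegreeCor1} then yields the desired equality $\deg_Y(\pi_*(D)) = \deg_X(D)$.

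The only potential subtlety is ensuring that being locally principal at every codimension-$1$ point is enough to conclude that a fractional ideal is locally principal as an $\Oo_Y$-module; on a curve (pure dimension $1$), all closed points are codimension-$1$ points, so there is nothing further to check. The argument is essentially a one-line reduction to the previous corollary, and I do not expect any real obstacle beyond citing the DVR structure of the local rings of a smooth curve.
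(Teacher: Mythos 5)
Your proof is correct and is essentially the paper's own argument: the paper likewise notes that every generalized divisor on a smooth curve is Cartier (since $\GDiv(Y)=\CDiv(Y)$ when $Y$ is smooth) and then invokes Corollary \ref{DegreeCor1}. Your justification via the DVR structure of the local rings simply spells out the detail the paper leaves implicit.
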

\begin{proof}
Let $D$ be a generalized divisor on $X$. The direct image $\pi_*(D)$ is a generalized divisor on $Y$ smooth, hence Cartier. Then, apply Corollary \ref{DegreeCor1}.
\end{proof}

Proposition \ref{FiberDegree} yields another useful corollary about the surjectivity of the direct image morphism.

\begin{cor}{\normalfont (Surjectivity of the direct image for effective divisors)}\label{SurjDirectImage}
Suppose that $Y$ is smooth and $k$ is algebraically closed. Then, the direct image for effective divisors: \[
\pi_*^+: \GDiv^+(X) \rightarrow \CDiv^+(Y)
\]
is surjective.
\end{cor}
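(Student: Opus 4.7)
The strategy is to decompose any effective Cartier divisor on $Y$ as a sum of closed points with multiplicities, lift each summand to an effective generalized divisor on $X$ supported in a single fibre, and glue.  Since $Y$ is a smooth curve over the algebraically closed field $k$, every effective Cartier divisor on $Y$ has a unique decomposition $E=\sum_{i}n_{i}[y_{i}]$ with distinct closed points $y_{i}$ and positive integers $n_{i}$.  Suppose we can exhibit, for every closed point $y\in Y$ and every $n\geq 1$, an effective generalized divisor $D$ on $X$ with $\Supp(D)\subseteq\pi^{-1}(y)$ and $\pi_{*}(D)=n[y]$; then, applying this to each summand, the sum $D=\sum_{i}D_{i}$ is an effective generalized divisor, well-defined because the supports $\pi^{-1}(y_{i})$ are pairwise disjoint.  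That $\pi_{*}(D)=E$ follows by observing that the ideal sheaves $\Ii_{D_{i}}$ are pairwise comaximal, so by the Chinese Remainder Theorem $\Oo_{X}/\Ii_{D}\cong\bigoplus_{i}\Oo_{X}/\Ii_{D_{i}}$; the pushforward $\pi_{*}$ respects this decomposition because $\pi$ is affine, and the $0$-th Fitting ideal of a direct sum of finitely presented modules is the product of the individual Fitting ideals.

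The problem is thus reduced to the local statement: given a closed point $y\in Y$ and an integer $n\geq 1$, construct an effective generalized divisor on $X$ supported in $\pi^{-1}(y)$ with direct image $n[y]$.  Choose a closed point $x\in\pi^{-1}(y)$, which exists because $\pi$ is finite and flat and hence surjective, and set $R=\Oo_{X,x}$ and $B=\Oo_{Y,y}$.  Then $B$ is a discrete valuation ring with uniformizer $t$, and both $R$ and $B$ have residue field $k$ because $k$ is algebraically closed.  The ring $R$ is Noetherian of Krull dimension $1$, hence of infinite length over itself.  I would build by induction on $m$ a strictly descending chain $R=J_{0}\supsetneq J_{1}\supsetneq\cdots\supsetneq J_{n}$ of ideals, each successive quotient $J_{m-1}/J_{m}$ being a simple $R$-module, necessarily isomorphic to $R/\mathfrak{m}_{x}\cong k$: at each stage a Zorn's lemma argument applied to the proper $R$-submodules of the finitely generated module $J_{m-1}$, using Noetherianity to ensure that ascending unions of proper submodules remain proper, produces a maximal proper submodule $J_{m}$.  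The chain does not terminate before step $n$ because $R$ has infinite length.  The cyclic $R$-module $R/J_{n}$ corresponds to a coherent skyscraper sheaf at $x$ of length $n$, and the kernel of the canonical surjection $\Oo_{X}\twoheadrightarrow R/J_{n}$ is the ideal sheaf $\Ii_{D}$ of the sought effective generalized divisor $D$.

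Finally, to compute $\pi_{*}(D)$, observe that $\Oo_{D}=R/J_{n}$ pushes forward to a skyscraper sheaf at $y$ with stalk $R/J_{n}$ of $B$-length $n$, since each simple quotient $J_{m-1}/J_{m}\cong k$ is annihilated by $t\in\mathfrak{m}_{x}$ and therefore has $B$-length one.  The structure theorem for torsion modules over a DVR combined with the multiplicativity of $\Fitt_{0}$ on direct sums gives $\Fitt_{0}^{B}(M)=(t^{\ell})$ for any torsion $B$-module of length $\ell$; applying this with $\ell=n$ yields $\pi_{*}(D)=n[y]$, as required.  I expect the principal obstacle to be the local chain construction: although the Zorn/Noetherian induction step is standard, verifying that every simple quotient has $B$-length exactly one is precisely where algebraic closedness of $k$ enters, since it guarantees $\kappa(x)=\kappa(y)=k$ and hence that $k$-dimension and $B$-length coincide on the relevant artinian modules.
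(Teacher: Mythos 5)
Your argument is correct, and it shares the overall skeleton of the paper's proof: write $E=\sum_i n_i[y_i]$ on the smooth curve $Y$, pick a single closed point $x_i\in\pi^{-1}(y_i)$ for each $i$ (using that $k$ is algebraically closed, so all residue fields are $k$), produce a local ideal at $x_i$ whose direct image has degree $n_i$ at $y_i$, and glue, the supports being pairwise disjoint. The genuine difference is in the key local step. The paper takes the ideal $\mathfrak{m}_{x_i}^{d_i}$ and asserts that the resulting divisor has degree $d_i$ at $x_i$; this holds only when $\Oo_{X,x_i}$ is regular, since at a singular point $\ell(\Oo_{X,x}/\mathfrak{m}_x^{2})=1+\dim_k\mathfrak{m}_x/\mathfrak{m}_x^{2}\geq 3$, and the fiber over $y_i$ may consist entirely of singular points (as for the tacnode over the node in Example~\ref{EsempioTacnode}). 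Your composition-series construction --- a chain $R=J_0\supsetneq\cdots\supsetneq J_n$ of ideals with simple quotients, which exists because $R$ is a one-dimensional Noetherian local ring and hence has infinite length --- produces an ideal of colength exactly $n$ at an \emph{arbitrary} point of the fiber, so your version of this step is the more robust one and in fact repairs the paper's argument. The two proofs also differ, inessentially, in how they identify $\pi_*(D)$: the paper invokes Proposition~\ref{FiberDegree} to match supports and local degrees and then uses smoothness of $Y$ to conclude equality of divisors, whereas you compute $\Fitt_0$ of the pushforward directly over the DVR $\Oo_{Y,y}$ via the structure theorem and the multiplicativity of $\Fitt_0$ on direct sums; both are valid, and your route has the small advantage of producing $\pi_*(D)=n[y]$ on the nose.
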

\begin{proof}
Let $E \in \CDiv^+(Y)$ be an effective Cartier divisor on $Y$ and let $V=\Spec(R) \subseteq Y$ be an affine open subset of $Y$ such that $E$ is supported on $V$. Let $\Supp(E)=\{ y_1, \dots, y_r \}$ be the support of $E$ and for each $i=1, \dots, r$ let \[
d_i=\deg_{y_i}(E)/[\kappa(y_i):k]=\ell_{\Oo_{Y,y_i}}(\Oo_{E,y_i}).
\] 
Let $U=\Spec(S)=\pi^{-1}(V)$ be the preimage of $V$. For each $i$, pick one element $x_i \in \pi^{-1}(y_i)$ in the finite fiber of $y_i$  and let $\mathfrak{M}_i$ be the maximal ideal of $S$ corresponding to the point $x_i$. Then, the ideal $I=\mathfrak{M}_1^{d_1} \cdot \dots \cdot \mathfrak{M}_r^{d_r}$ in $S$ defines a divisor $D$ on $U$ (and hence of $X$) such that $\deg_{y_i}(E)=\deg_{x_i}(D)$. Looking at its direct image $\pi_*(D)$, it is an effective divisor on $Y$ with the same support of $E$ and the same degree at any point by Propostion \ref{FiberDegree}. Since $Y$ is smooth, we conclude that $\pi_*(D)=E$.
\end{proof}

\section{The direct and inverse image for families of generalized divisors}\label{SectionDirectImageFamilies}
Let $\pi: X \rightarrow Y$ be a \textbf{finite, flat map of degree $n$ between projective curves over a field $k$}. In the present section, we discuss  the definition of direct and inverse image for families of effective generalized divisors. Under suitable conditions, recalling Definition \ref{HilbertDef}, we aim to define a pair of geometric morphisms: 
\begin{align*}
\pi_*&: \Hilb_X \rightarrow \Hilb_Y \\
\pi^*&: \Hilb_Y \rightarrow \Hilb_X
\end{align*}
that, on $k$-valued points, coincide with the direct and inverse image between $\GDiv^+(X)$ and $\GDiv^+(Y)$.

\vspace{1em}

\begin{rmk}
Giving a definition of the direct image for families of effective generalized divisors is not possible in general when the curve $Y$ is not smooth over $k$. Consider, for example, the setting of Example \ref{EsempioTacnode}; since $X$ and $Y$ are reduced curves with planar singularities, their Hilbert schemes of generalized divisors of given degree are connected (see \cite{AIK}, \cite{BGS}). The effective divisors $D_1$ and $D_2$ on $X$ defined by $(x^2, y)$ and $(x)$  on $X$  have both degree $2$, but their direct images on the quotient node $Y$ have degree respectively equal to $3$ and $2$. Then, their $k$-points $D_1, D_2$ in the connected component $\Hilb_X^2$ of $\Hilb_X$ are sent to different connected components of $\Hilb_Y$. This shows that \textit{the direct image of divisors in general is not defined as a geometric map}.
\end{rmk}

\vspace{1em}

In the rest of the section, consider $\pi: X \rightarrow Y$ a finite, flat map of degree $n$ between projective curves over $k$, and \textbf{suppose that $Y$ is smooth over $k$}. Recall that, in such case, $\Hilb_Y=\lHilb_Y$.

\begin{deflemma}{(Direct image for families of effective generalized divisors)}\label{DirectImageForHilbDef}
	Let $T$ be any $k$-scheme. The \textit{direct image map for the Hilbert scheme of effective generalized divisors} is defined on the $T$-valued points as:
	\begin{align*}
	\pi_*(T): \hspace{2em}	\Hilb_X(T)  &\longrightarrow \lHilb_Y(T) \\
	D \subseteq X \times_k T &\longmapsto \Zz\left( \Fitt_0(\pi_{T,*}(\Oo_D)) \right)
	\end{align*}
where $\pi_T: X \times_k T \rightarrow Y \times_k T$ is the morphism induced by base change of $\pi$.
\end{deflemma}
\begin{proof}
Let $D \subseteq X \times_k T$ be a $T$-flat family of effective divisors of $X$, defined by an ideal sheaf $\Ii\subseteq  \Oo_{X \times_k T}$ such that $\Oo_D=\Oo_{X \times_k T}/\Ii$ is flat over $S$. From the exact sequence \[
0 \rightarrow \Ii \rightarrow \Oo_{X \times_k T} \rightarrow \Oo_D \rightarrow 0
\]
we deduce that also $\Ii$ is flat over $T$. Since $\pi$ is finite and flat, $\pi_{T,*} (\Ii)$ is also flat over $T$, fiberwise locally free since $Y$ is smooth and hence locally free on $Y \times_k T$ by \cite[Lemma 2.1.7]{Huy}. Moreover, it fits the exact sequence \[
0 \rightarrow \pi_{T,*}(\Ii) \xrightarrow{\varphi} \pi_{T,*}(\Oo_{X \times_k T}) \rightarrow \pi_{T,*}(\Oo_D) \rightarrow 0.
\]
Then, by Definition \ref{FittingDef}, the $0$-th Fitting ideal of $\pi_{T,*}(\Oo_D)$ is the image of the canonical injection \[
\det\left( \pi_{T,*} (\Ii) \right) \otimes \det\left( \pi_{T,*}\Oo_{X\times_k T} \right)^{-1} \xhookrightarrow{\det(\varphi)} \Oo_{Y \times_k T}
\]
and this is locally free over $Y \times_k T$, hence flat over $T$. Then, it defines a $T$-flat family of effective divisors of $Y$.
\end{proof}

\begin{rmk}
	For any $T$-family of effective generalized divisors $D \subseteq X \times_k T$  and for any point $t \in T$, the fiber $\pi_*(T)(D)_t$ is equal to the direct image $\pi_*(D_t)$ defined for the effective divisor $D_t$ on $X$. Moreover, since $Y$ is smooth, by Corollary \ref{DegreeCor} we have: \[
	\deg_Y(\pi_*(D_t)) = \deg_X(D_t).
	\]
	Then, for any $d \geq 0$, $\pi_*$ restricts  to a map: \[
	\pi_*^d: \Hilb_X^d \longrightarrow \lHilb_Y^d.
	\]
\end{rmk}

\begin{deflemma}{(Inverse image for families of effective generalized divisors)}\label{InverseImageForHilb}
	Let $T$ be any $k$-scheme. The \textit{inverse image map for the Hilbert scheme of effective Cartier divisors} is defined on the $T$-valued points as:
\begin{align*}
\pi^*(T): \hspace{2em}	\lHilb_Y(T)  &\longrightarrow \lHilb_X(T) \\
D \subseteq Y \times_k T &\longmapsto \Zz\left( \pi_T^{-1}(\Ii_D)\cdot \Oo_{X \times_k T} \right)
\end{align*}
where $\pi_T: X \times_k T \rightarrow Y \times_k T$ is the morphism induced by base change of $\pi$ and $\Ii_D \subseteq  \Oo_{Y \times_k T}$ is the ideal sheaf of $D$.
\end{deflemma}
\begin{proof}
Since $\Ii$ is locally principal, $\pi_T^{-1}(\Ii)\cdot \Oo_{X \times_k T} \subseteq \Oo_{X \times_k T}$ is locally principal. The restriction of $\pi^*(T)(D)$ to the fiber over any $t \in T$ has ideal sheaf $\pi^{-1}(\Ii_t)\cdot \Oo_{X \times_k t}$, which is equal to the defining ideal of $\pi^*(D_t)$ by Definition \ref{InverseImageDef}. Hence, $\pi^*(T)(D)$ is a locally principal subscheme of  $X \times_k T$, such that all fibers over $T$ are effective Cartier divisors by Lemma \ref{InverseLinear}. Then, $\pi^*(T)(D)$ is $T$-flat  by \cite[\href{https://stacks.math.columbia.edu/tag/062Y}{Tag 062Y}]{stacks-project}, hence it is a $T$-family of Cartier divisors.
\end{proof}

\begin{rmk}
For any integer $d \geq 0$, $\pi^*$ restricts to a map: \[
\pi^*_d: \lHilb_Y^d \longrightarrow \lHilb_X^{nd}.
\]
\end{rmk}

We study now some properties of the direct and inverse image for families of effective generalized divisors. With a slight abuse of notation, we will write $\pi_*$ and $\pi^*$ instead of $\pi_*(T)$ and $\pi^*(T)$, when it is clear that we are working on $T$-points.

\begin{prop} {\normalfont (Properties of the direct and inverse image for families of effective generalized divisors)}
	Let $T$ be any $k$-scheme. Then, the following fact holds.
	\begin{enumerate}
		\item Let $D, E$ be $T$-families of effective divisors over $X$ such that $E$ is a family of Cartier divisors. Then, $\pi_*(D+E) = \pi_*(D)+\pi_*(E)$.
		\item Let $F, G$ be $T$-families of effective divisors over $Y$. Then, $\pi^*(F+G) = \pi^*(F)+\pi^*(G)$.
		\item If $F$ is a $T$-family of effective divisors over $Y$, then $\pi_*(\pi^*(F)) = nF$.
	\end{enumerate}
\end{prop}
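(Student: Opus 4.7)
The plan is to reduce all three statements to the corresponding $k$-point assertions already proved for single curves in Section \ref{SectionDirectImageSet}, by working with the base-changed morphism $\pi_T \colon X \times_k T \to Y \times_k T$. The key enabling facts are: $\pi_T$ is still finite and flat; $\pi_{T,*}\Oo_{X \times_k T}$ is locally free of rank $n$ on $Y \times_k T$ (true already for $\pi_*\Oo_X$ on $Y$, preserved by flat base change); Fitting ideals, inverse image ideals and products of ideal sheaves all commute with base change. Since $Y$ is smooth, being Cartier on $Y \times_k T$ (or on $X\times_k T$, after pulling back a relative effective Cartier divisor on $Y \times_k T$) is an absolute condition preserved by base change.

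For part (1), I would proceed as in Lemma \ref{linear}. Let $\Ii_D, \Ii_E \subseteq \Oo_{X \times_k T}$ be the ideal sheaves, so that $\Ii_{D+E} = \Ii_D \cdot \Ii_E$. Working locally on an affine $V \subseteq Y \times_k T$ where $\pi_{T,*}\Oo_{X\times_k T}$ is free of rank $n$, choose generators $s_1, \dots, s_r$ of $\pi_{T,*}\Ii_D$ and a single generator $h$ of $\pi_{T,*}\Ii_E$ (using that $E$ is relatively Cartier, hence locally principal after $\pi_{T,*}$). The presentation of $\pi_{T,*}(\Oo_{X\times_k T}/\Ii_D\Ii_E)$ is then given by the matrix $H \cdot [S_1 \mid \cdots \mid S_r]$, and every $n \times n$ minor factors as $\det(H)$ times an $n \times n$ minor of $[S_1 \mid \cdots \mid S_r]$. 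By Definition \ref{FittingDef}, this yields $\Fitt_0(\pi_{T,*}\Oo_{D+E}) = \Fitt_0(\pi_{T,*}\Oo_E) \cdot \Fitt_0(\pi_{T,*}\Oo_D)$, which translates to $\pi_*(D+E) = \pi_*(D) + \pi_*(E)$.

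Part (2) is purely formal from Definition/Lemma \ref{InverseImageForHilb}: the ideal sheaf of $F+G$ is $\Ii_F \cdot \Ii_G$, and the inverse image of a product of ideals equals the product of inverse images, so
\[
\pi_T^{-1}(\Ii_F \cdot \Ii_G)\cdot \Oo_{X\times_k T} = \bigl(\pi_T^{-1}(\Ii_F)\cdot \Oo_{X\times_k T}\bigr) \cdot \bigl(\pi_T^{-1}(\Ii_G)\cdot \Oo_{X\times_k T}\bigr),
\]
which is exactly the ideal of $\pi^*(F) + \pi^*(G)$.

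Part (3) is the family version of Proposition \ref{DirectAndInverseImage}, proved by the same local computation. Locally on $V \subseteq Y \times_k T$ trivializing $\pi_{T,*}\Oo_{X\times_k T} \simeq \Oo_{Y\times_k T}^{\oplus n}$ and with $\Ii_F$ generated by a single section $h$, the pullback $\pi^*(F)$ is locally defined by $h$ (viewed in $\Oo_{X\times_k T}$), and the presentation $\Oo^{\oplus n}_{Y \times_k T|V} \xrightarrow{\cdot h} \Oo^{\oplus n}_{Y\times_k T |V} \to \pi_{T,*}(\Oo_{X\times_k T}/\pi_T^*\Ii_F)_{|V}\to 0$ is given by the diagonal matrix $h\cdot \mathrm{Id}_n$. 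Its $n\times n$ determinant is $h^n$, hence $\Fitt_0 = (h^n) = \Ii_F^n|_V$, giving $\pi_*(\pi^*(F)) = nF$. I do not foresee a major obstacle: the only mild subtlety is ensuring that the local trivializations used in the arguments of Section \ref{SectionDirectImageSet} (analogues of Lemma \ref{TrivialOnSaturated}) are available over $Y \times_k T$, which follows from local freeness of $\pi_{T,*}\Oo_{X \times_k T}$ and the fact that $\Ii_E$, $\Ii_F$ are relatively invertible.
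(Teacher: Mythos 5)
Your proposal is correct and follows essentially the same route as the paper: the author simply states that parts (1) and (2) follow the proofs of Lemma \ref{linear} and Lemma \ref{InverseLinear}, and part (3) follows the proof of Proposition \ref{DirectAndInverseImage}, i.e.\ the identical local matrix and ideal computations carried out for the base-changed morphism $\pi_T$ over $Y \times_k T$. The only thing you add is the (correct and implicitly needed) verification that the local trivializations and the compatibility of Fitting ideals, inverse image ideals and products with base change are available in the relative setting.
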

\begin{proof}
The proof of parts (1) and (2) follows the proofs of the second part of Lemma \ref{linear} and \ref{InverseLinear} respectively. The proof of part (3) follows the proof of Proposition \ref{DirectAndInverseImage}.
\end{proof}

\section{The Norm  and the inverse image for families of torsion-free rank-1 sheaves}\label{SectionNormOnJbar}
In the present section, we provide the definition of the Norm map for families torsion-free sheaves of rank 1 and the related inverse image map. Since we are dealing with  families of sheaves, we  \textbf{assume that $X$ and $Y$ are projective curves over a field $k$}. 

\vspace{1em}

In order to be compatible with the direct image for generalized line bundles, our definition of the Norm map on $\Jbar(X)$ will be inspired by the sheaf-theoretic formula of Proposition \ref{DetFormula}. By Proposition \ref{GeneralizedDetFormula} and its corollaries, the generalization of such formula to generalized divisors and torsion-free sheaves involves taking the double $\omega$-dual of the exterior power of the pushforward of generalized line bundles. In general, this operation does not behave well in families if $Y$ is not smooth. 

Then, in accordance with the previous section, we \textbf{suppose that $Y$ is smooth over $k$}; in such case, the moduli space $\Jbar(Y)$ is actually equal to the Jacobian $J(Y)$. Then, the aim of this section is to provide a pair of geometric morphism: \begin{align*}
\Nm_\pi: \Jbar(X) &\rightarrow J(Y) \\
\pi^{-1}: J(Y) &\rightarrow J(X) \subseteq \Jbar(X)
\end{align*}
that, on $k$-valued points, coincide with the direct and inverse image maps between $\GPic(X)$ and $\GPic(Y)=\Pic(Y)$. 

Recall also (Definition \ref{TwistedAbel}) that, for any line bundle $M$ on $X$, the compactified Jacobian of $X$ is related to the Hilbert scheme of effective generalized divisors via the twisted Abel map $\A_M$. We will show that the direct image map and the Norm map are compatible as well as the inverse image maps, meaning that for any $M \in J(X)$ and $N \in J(Y)$ there are commutative diagrams of $k$-schemes:
\[ \begin{tikzcd}
\Hilb_X \arrow{rrr}{\pi_*} \arrow{d}{\A_M} &[-25pt]  &[-25pt]  & \lHilb_Y \arrow{d}{\A_{\Nm_\pi(M)}} \\
\Jgen(X) & \subseteq & \Jbar(X) \arrow{r}{\Nm_\pi} & J(Y) 
\end{tikzcd}
\hspace{4em}
\begin{tikzcd}
\lHilb_Y \arrow{r}{\pi^*} \arrow{d}{\A_N}  & \lHilb_X
\arrow{d}{\A_{\pi^*(N)}} \\
J(Y) \arrow{r}{\pi^*} & J(X).
\end{tikzcd}
\]

\vspace{1em}

We give first the definition for the Norm map on $\Jbar(X)$.

\begin{deflemma}{(Norm map for torsion-free rank-1 sheaves)}\label{NormOnMPureDef}
	Let $T$ be any $k$-scheme. The \textit{Norm map between compactified Jacobians} associated to $\pi$ is defined on the $T$-valued points as:
	\begin{align*}
	\Nm_\pi(T): \hspace{2em} \Jbar(X)(T) &\longrightarrow J(Y)(T) \\
	\Ll &\longmapsto \det\left( \pi_{T,*} (\Ll) \right) \otimes \det\left( \pi_{T,*}\Oo_{X\times_k T} \right)^{-1}.
	\end{align*}
\end{deflemma}
\begin{proof}
 Let $\Ll$ be a $T$-family of torsion-free sheaves of rank 1 on $X$, i.e. a $T$-flat coherent sheaf on $X \times_k T$, whose  fibers over $T$ are torsion-free sheaves of rank 1. The push-forward $\pi_{T,*} (\Ll)$ is a $T$-flat coherent sheaf on $Y \times_k T$ such that, for any $t \in T$, the fiber $(\pi_{T,*} \Ll)_t$ equals $\pi_*(\Ll_t)$ on $Y \simeq Y \times_k {t}$, . Since $Y$ is smooth,  $\pi_*(\Ll_t)$ is a locally free sheaf of rank $n$ for any $t \in T$. Then, by \cite[Lemma 2.1.7]{Huy}, $\pi_{T,*} (\Ll)$ is a locally sheaf of rank $n$ on $Y \times_k T $. Its determinant bundle is a line bundle on $Y \times_k T$, hence flat over $T$.
\end{proof}

The definition of the inverse image is standard but we give it for the sake of completeness.
\begin{deflemma}{(Inverse image map for line bundles)}\label{InverseImageOnCompJacDef}
Let $T$ be any $k$-scheme. The \textit{inverse image map between Jacobians} associated to $\pi$ is defined on the $T$-valued points as:
	\begin{align*}
\pi^*(T): \hspace{2em} J(Y)(T) &\longrightarrow J(X) \subseteq \Jbar(X)(T) \\
\Nn &\longmapsto \pi_T^*(\Nn).
\end{align*}
\end{deflemma}
\begin{proof}
By hypothesis, $\Nn$ is a $T$-flat coherent sheaf on $Y \times_k T$ that is a line bundle on any fiber over $T$; then, by \cite[Lemma 2.1.7]{Huy}, it is a line bundle on $Y \times_k T$. We conclude that $\pi^*\Nn$ is a line bundle on $X \times_k T$ and hence a $T$-flat family of line bundles over $T$.
\end{proof}

\begin{rmk} When there is no ambiguity, we will write $\Nm_\pi$ and $\pi^*$ instead of $\Nm_\pi(T)$ and $\pi^*(T)$. The Norm and the inverse image for generalized line bundles define morphisms of algebraic stacks \begin{align*}
\Nm_\pi: \hspace{2em} \Jbar(X) &\longrightarrow J(Y) \\
\pi^*: \hspace{2em} J(Y) &\longrightarrow J(X).
	\end{align*}
For any integer $d$, they restricts to: \begin{align*}
\Nm_\pi^d: \hspace{2em} \Jbar(X,d) &\longrightarrow J^d(Y) \\
\pi^*_d: \hspace{2em} J^d(Y) &\longrightarrow J^{nd}(X).
\end{align*}
Moreover, the Norm for torsion-free rank-1 sheaves, restricted to the locus of line bundles $J(X) \subseteq \Jbar(X)$, coincides with the classical Norm map from $J(X)$ to $J(Y)$ of Definition \ref{NormOnJacDef}.
\end{rmk}

We study now some properties of the Norm map on $\Jbar(X)$ and the inverse image map. First, we need a technical lemma.

\begin{lemma}\label{AssociatveFormulaForDet}
	Let $T$ be a fixed $k$-scheme. For any $T$-flat family  $\Ll$ of torsion-free rank-1 sheaves on $X \times_k T$ and any line bundle $\Mm$ on $X \times_k T$, there is an isomorphism: \begin{align*}
	\det\left( \pi_{T,*} (\Ll \otimes \Mm) \right) \otimes \det\left( \pi_{T,*}\Oo_{X\times_k T} \right) \simeq \\ \simeq \det\left( \pi_{T,*} (\Ll ) \right) \otimes  \det\left( \pi_{T,*} (\Mm) \right).
	\end{align*}
\end{lemma}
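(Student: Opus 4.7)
The plan is to reduce to a local \v{C}ech cocycle computation. The key observation is that tensoring by the line bundle $\Mm$ twists the pushforward $\pi_{T,*}(-)$ by the cocycle defining $\Mm$, and passing to the determinant converts that twist into the \emph{norm} of the cocycle---which is precisely the defining cocycle of the line bundle $\Nn := \det(\pi_{T,*}\Mm) \otimes \det(\pi_{T,*}\Oo_{X \times_k T})^{-1}$ on $Y \times_k T$. The desired identity is then equivalent to the single isomorphism $\det \pi_{T,*}(\Ll \otimes \Mm) \simeq \det \pi_{T,*}\Ll \otimes \Nn$, obtained by eliminating $\Nn$ between two instances of the same cocycle computation.

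First I would establish that $\pi_{T,*}\Ll$, $\pi_{T,*}(\Ll \otimes \Mm)$ and $\pi_{T,*}\Mm$ are all locally free of rank $n$ on $Y \times_k T$, so that their determinants are well-defined line bundles. This proceeds exactly as in the proof of Definition/Lemma \ref{NormOnMPureDef}: the three sheaves are $T$-flat with torsion-free rank-$1$ fibers on $X$, so their pushforwards have fibers $\pi_*(\Ll_t), \pi_*((\Ll\otimes\Mm)_t), \pi_*(\Mm_t)$ that are locally free of rank $n$ on the smooth curve $Y$, and \cite[Lemma 2.1.7]{Huy} promotes fiberwise local freeness to local freeness on $Y \times_k T$. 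Next I would pick an open affine cover $\{V_i\}_{i \in I}$ of $Y \times_k T$ fine enough that $\pi_{T,*}\Oo_{X \times_k T}$ is trivial of rank $n$ on each $V_i$ and, setting $U_i := \pi_T^{-1}(V_i)$, the line bundle $\Mm$ itself is trivial on each $U_i$ (possible since $\pi_{T,*}\Mm$ is an invertible $\pi_{T,*}\Oo$-module by \cite[Proposition 6.1.12]{EgaII} and $\pi_T$ is affine). Let $\mu_i: \Mm|_{U_i} \xrightarrow{\sim} \Oo_{X \times_k T}|_{U_i}$ be the chosen trivializations and $m_{ij} := \mu_i \mu_j^{-1} \in \Gamma(V_i \cap V_j, (\pi_{T,*}\Oo_{X \times_k T})^*)$ the associated cocycle defining $\Mm$.

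For any coherent sheaf $\Ff$ on $X \times_k T$, the local isomorphisms $\mathrm{id}_{\Ff} \otimes \mu_i : (\Ff \otimes \Mm)|_{U_i} \xrightarrow{\sim} \Ff|_{U_i}$ push forward to isomorphisms $\pi_{T,*}(\Ff \otimes \Mm)|_{V_i} \xrightarrow{\sim} \pi_{T,*}\Ff|_{V_i}$ whose transition on overlaps is multiplication by $m_{ij}$, regarded as a $\pi_{T,*}\Oo$-linear endomorphism of $\pi_{T,*}\Ff|_{V_i \cap V_j}$. Applying this to $\Ff = \Ll$ and to $\Ff = \Oo_{X \times_k T}$, and taking determinants as $\Oo_{Y \times_k T}$-linear endomorphisms of the rank-$n$ locally free sheaves $\pi_{T,*}\Ll$ and $\pi_{T,*}\Oo_{X \times_k T}$, yields in both cases the same cocycle $\{\Nm_{Y \times_k T / X \times_k T}(m_{ij})\}$ by Definition \ref{NormOfSheaves}. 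Writing $\Nn$ for the line bundle on $Y \times_k T$ with this cocycle, we obtain the two isomorphisms $\det \pi_{T,*}(\Ll \otimes \Mm) \simeq \det \pi_{T,*}\Ll \otimes \Nn$ and $\det \pi_{T,*}\Mm \simeq \det \pi_{T,*}\Oo_{X \times_k T} \otimes \Nn$; tensoring the first by $\det\pi_{T,*}\Oo_{X\times_k T}$ and using the second to substitute for $\det\pi_{T,*}\Oo_{X\times_k T} \otimes \Nn$ gives the formula in the statement.

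The main point to check carefully, though essentially routine, is that the determinant of multiplication by a scalar $m_{ij} \in (\pi_{T,*}\Oo_{X\times_k T})^*$ on a rank-$n$ locally free $\pi_{T,*}\Oo$-module depends only on $m_{ij}$ and not on the module---so that the cocycles produced by $\Ff = \Ll$ and by $\Ff = \Oo_{X \times_k T}$ genuinely coincide. This reduces to the observation that on a refined trivializing cover both modules become isomorphic to $(\pi_{T,*}\Oo)^{\oplus 1}$ and multiplication by $m_{ij}$ is represented by the same $n \times n$ matrix over $\Oo_{Y\times_k T}$ up to conjugation, so taking the determinant returns precisely $\Nm_{Y\times_k T / X \times_k T}(m_{ij})$ in both cases.
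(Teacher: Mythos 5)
Your strategy is essentially the paper's: trivialize $\Mm$ on a cover by $\pi_T$-saturated opens, observe that tensoring by $\Mm$ twists each pushforward by the cocycle $m_{ij}$, and check that after taking determinants the twist acquired by $\det\pi_{T,*}(\Ll\otimes\Mm)$ relative to $\det\pi_{T,*}\Ll$ coincides with the one acquired by $\det\pi_{T,*}\Mm$ relative to $\det\pi_{T,*}\Oo_{X\times_k T}$. The paper packages this as a gluing of local isomorphisms $\alpha_i=\det(\pi_{T,*}\lambda_i)\otimes\det(\pi_{T,*}\lambda_i^{-1})$ whose obstructions cancel; your version, which introduces the auxiliary line bundle $\Nn$ with cocycle $\{\Nn_{Y/X}(m_{ij})\}$ and eliminates it between two instances of the computation, is the same argument written out in cocycles. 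To your credit, you explicitly isolate the one non-formal step, which the paper's proof hides inside the phrase ``since $\det(\pi_{T,*}\_)$ is functorial, the obstruction $\alpha_i\circ\alpha_j^{-1}$ is trivial'': namely that the determinant of multiplication by $m_{ij}$ on the rank-$n$ locally free sheaf $\pi_{T,*}\Ll$ equals $\Nn_{Y/X}(m_{ij})$, i.e.\ is independent of the module.

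Your justification of that step, however, does not work as written. You argue that on a refined cover both $\pi_{T,*}\Ll$ and $\pi_{T,*}\Oo_{X\times_k T}$ become isomorphic to $(\pi_{T,*}\Oo_{X\times_k T})^{\oplus 1}$, so that multiplication by $m_{ij}$ is represented by conjugate matrices. This is correct for $\pi_{T,*}\Oo_{X\times_k T}$ and $\pi_{T,*}\Mm$ (that is the content of \cite[Proposition 6.1.12]{EgaII}, which applies to \emph{invertible} modules), but it is false for a general torsion-free rank-$1$ sheaf $\Ll$: since $\pi_T$ is affine, $\pi_{T,*}\Ll$ is locally free of rank $1$ over $\pi_{T,*}\Oo_{X\times_k T}$ on a cover of $Y\times_k T$ if and only if $\Ll$ is invertible on a cover of $X\times_k T$ by saturated opens, which is precisely what is not assumed (think of $\Ll$ the ideal sheaf of a singular point of $X$). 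So for the factor $\pi_{T,*}\Ll$ the ``same matrix up to conjugation'' argument is unavailable, and the identity $\det\big(\cdot\, m_{ij}\big|_{\pi_{T,*}\Ll}\big)=\Nn_{Y/X}(m_{ij})$ still requires proof. It is true, but needs a separate argument --- for instance, both sides are regular functions on $V_i\cap V_j$ that are multiplicative in $m_{ij}$ and agree over the generic points of $Y$ (where $\Ll$ and $\Oo_X$ have stalks of equal length, hence equal class in the Grothendieck group of finite-length modules, and $\det$ is multiplicative in short exact sequences), from which one concludes by a density/devissage argument. Note that the paper's own proof silently relies on the same fact, so this is a gap in the justification of a correctly identified key step rather than a wrong route.
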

\begin{proof}
	The proof is similar to the second part of the proof of Proposition \ref{GeneralizedDetFormula}.
	
	Since $\pi_T: X \times_k T \rightarrow Y \times_k T$ is finite and flat, $\pi_{T,*}(\Ll)$ is a $T$-flat coherent sheaf on $Y \times_k T$, that is locally free on any fiber over $T$ since $Y$ is smooth; then, $\pi_{T,*}(\Ll)$ is locally free of rank $n$ by \cite[Lemma 2.1.7]{Huy}. The same holds for $\pi_{T,*}(\Ll \otimes \Mm)$. On the other hand, by \cite[Proposition 6.1.12]{EgaII}, $\pi_{T,*} \Mm$ is a locally free $\pi_{T,*}\Oo_X$-module of rank $1$. In particular, there is open cover $\{V_i\}_I$ of $Y$ such that $\Mm$ is a trivial $\Oo_{X|U_i}$-module on each $U_i = \pi_T^{-1}(V_i)$, i.e. there are isomorphisms $\lambda_i: \Mm_{|U_i}  \xrightarrow{{}_\sim} \Oo_{X|U_i}$ for each $i \in I$. On the intersections $U_i \cap U_j$, the collection $\{\lambda_i \circ \lambda_j^{-1} \}$ of automorphisms of ${\Oo_{X \times_k T}}_{|U_i \cap U_j}$ is a cochain that measures the obstruction for the $\lambda_i$'s to glue to a global isomorphism.  We define now an isomorphism \[
	\alpha: \det\left( \pi_{T,*} (\Ll \otimes \Mm) \right) \otimes \det\left( \pi_{T,*}\Oo_{X\times_k T} \right) 	\longrightarrow \det\left( \pi_{T,*} (\Ll ) \right) \otimes  \det\left( \pi_{T,*} (\Mm) \right)
	\] by glueing a collection of isomorphisms $\alpha_i$ defined on each $V_i$. To do so, we define each $\alpha_i$ as the following composition of arrows:
	\vspace{1em}
	
	\noindent\begin{tikzcd}
		\left(\det\big( \pi_{T,*} (\Ll \otimes \Mm) \big) \otimes \det(\pi_{T,*}\Oo_X)\right)_{|V_i} \arrow{r}{\alpha_i} \arrow{d}[phantom, anchor=center,rotate=-90,yshift=-1ex]{=}  &  \left(\det\big( \pi_{T,*} \Ll \big) \otimes \det\big( \pi_{T,*} \Mm \big)\right)_{|V_i} \\
		\det\big( \pi_{T,*} (\Ll \otimes \Mm)_{|U_i} \big) \otimes \det(\pi_{T,*}\Oo_{X|U_i}) \arrow{d}[swap]{\det\big(\pi_{T,*}\lambda_i\big) \otimes \id }  & \det\big( \pi_{T,*} \Ll_{|U_i} \big) \otimes \det\big( \pi_{T,*} \Mm_{|U_i} \big) \arrow{u}[anchor=center,rotate=-90,yshift=1ex]{=}  \\
		\det\big( \pi_{T,*} \Ll_{|U_i} \big) \otimes \det(\pi_{T,*}\Oo_{X|U_i}) \arrow[r, phantom, "="] &
		\det\big( \pi_{T,*} \Ll_{|U_i} \big) \otimes \det\big(\pi_{T,*}\Oo_{X|U_i} \big)
		\arrow{u}[swap]{\id \otimes \det \big(\pi_{T,*} \lambda_i^{-1}\big)} 
	\end{tikzcd}
	\vspace{1em}
	
	\noindent i.e. $\alpha_i := \det\big(\pi_{T,*}\lambda_i\big) \otimes \det \big(\pi_{T,*} \lambda_i^{-1}\big)$. Since $\det\big(\pi_{T,*}\_ \big)$ is functorial, the obstruction $\alpha_i \circ \alpha_j^{-1}$ is trivial on any $V_i \cap V_j$, whence the $\alpha_i$'s glue together to a global isomorphism $\alpha$.
\end{proof}

\begin{prop}{\normalfont (Properties of the Norm and the inverse image map)}\label{PropertiesOfNormOnCompJac}
	Let $T$ be any $k$-scheme. Then, the following facts hold.
	\begin{enumerate}
		\item Let $\Ll, \Mm \in \Jbar(X)(T)$  such that $\Mm$ is a $T$-flat family of line bundles. Then, $\Nm_\pi(\Ll \otimes \Mm) \simeq \Nm_\pi(\Ll) \otimes \Nm_\pi(\Mm)$.
		\item Let $\Nn, \Nn' \in J(Y)(T)$. Then, $\pi^*(\Nn \otimes \Nn') \simeq \pi^*(\Nn) \otimes \pi^*(\Nn')$.
		\item Let $\Nn \in J(Y)(T)$. Then, $\pi^*(\Nn)$ is a $T$-flat family of  line bundles over $X$ and $\Nm_\pi(\pi^*(\Nn)) \simeq \Nn^{\otimes n}$.
	\end{enumerate}
\end{prop}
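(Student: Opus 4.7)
The plan is to reduce all three parts to direct applications of the definition of $\Nm_\pi$ in Def/Lemma \ref{NormOnMPureDef}, together with Lemma \ref{AssociatveFormulaForDet} and standard multilinear algebra of determinant bundles. No real obstacle is anticipated; the proof is a sequence of formal rearrangements.

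For (1), I would simply unfold the definition of $\Nm_\pi$ on the left-hand side to get
\[\Nm_\pi(\Ll \otimes \Mm) \simeq \det\bigl(\pi_{T,*}(\Ll \otimes \Mm)\bigr) \otimes \det\bigl(\pi_{T,*}\Oo_{X \times_k T}\bigr)^{-1}\]
and then invoke Lemma \ref{AssociatveFormulaForDet} (with $\Mm$ the line bundle factor) to replace $\det(\pi_{T,*}(\Ll \otimes \Mm))$ with $\det(\pi_{T,*}\Ll) \otimes \det(\pi_{T,*}\Mm) \otimes \det(\pi_{T,*}\Oo_{X \times_k T})^{-1}$. Substituting and regrouping the two factors of $\det(\pi_{T,*}\Oo_{X \times_k T})^{-1}$ one to each side yields $\Nm_\pi(\Ll) \otimes \Nm_\pi(\Mm)$.

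For (2), the assertion reduces to the tautological fact that pullback commutes with tensor product of quasi-coherent sheaves, i.e.\ $\pi_T^*(\Nn \otimes \Nn') \simeq \pi_T^*(\Nn) \otimes \pi_T^*(\Nn')$, which is the definition of the inverse image map in Def/Lemma \ref{InverseImageOnCompJacDef}.

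For (3), the flatness and locally-free-of-rank-one assertion for $\pi^*(\Nn)$ has already been recorded in the construction of $\pi^*$ in Def/Lemma \ref{InverseImageOnCompJacDef}. For the identity $\Nm_\pi(\pi^*\Nn) \simeq \Nn^{\otimes n}$, I would apply the projection formula (valid here since $\Nn$ is a line bundle on $Y \times_k T$) to get $\pi_{T,*}(\pi_T^*\Nn) \simeq \pi_{T,*}\Oo_{X \times_k T} \otimes \Nn$, and then use the standard formula $\det(E \otimes L) \simeq \det(E) \otimes L^{\otimes \rk E}$ for $E = \pi_{T,*}\Oo_{X \times_k T}$ locally free of rank $n$ and $L = \Nn$ a line bundle. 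This gives $\det(\pi_{T,*}\pi_T^*\Nn) \simeq \det(\pi_{T,*}\Oo_{X \times_k T}) \otimes \Nn^{\otimes n}$, and substituting into the definition of $\Nm_\pi(\pi^*\Nn)$ the factor $\det(\pi_{T,*}\Oo_{X \times_k T})$ cancels with $\det(\pi_{T,*}\Oo_{X \times_k T})^{-1}$, leaving $\Nn^{\otimes n}$.
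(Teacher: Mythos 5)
Your proposal is correct and follows essentially the same route as the paper: part (1) via Lemma \ref{AssociatveFormulaForDet}, part (2) as a tautology, and part (3) via the projection formula and $\det(E\otimes L)\simeq \det(E)\otimes L^{\otimes \rk E}$. The only detail the paper makes explicit that you leave implicit is that a $T$-flat family of line bundles $\Mm$ is in fact a line bundle on the total space $X\times_k T$ (by fiberwise local freeness and flatness), which is what licenses applying Lemma \ref{AssociatveFormulaForDet} to it.
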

\begin{proof}
	To prove (1), note that $\Mm$ is a $T$-flat coherent sheaf on $X \times_k T$ that is a line bundle on any fiber over $T$; hence it is a line bundle on $X \times_k T$ by \cite[Lemma 2.1.7]{Huy}. Then, applying Definition \ref{NormOnMPureDef} and Lemma \ref{AssociatveFormulaForDet}, we have:
	\begin{align*}
	\Nm_\pi(\Ll \otimes \Mm) &= \det\left( \pi_{T,*} (\Ll \otimes \Mm) \right) \otimes \det\left( \pi_{T,*}\Oo_{X\times_k T} \right)^{-1} \simeq \\
	&\simeq  \det\left( \pi_{T,*} (\Ll ) \right) \otimes  \det\left( \pi_{T,*} (\Mm) \right) \otimes \det\left( \pi_{T,*}\Oo_{X\times_k T} \right)^{-2}  \simeq \\
	&\simeq \Nm_\pi(\Ll) \otimes \Nm_\pi(\Mm).
	\end{align*}
	
\noindent	Part (2) follows from the associative properties of the tensor product.
	
\noindent	To prove (3), compute by Definitions \ref{NormOnMPureDef} and \ref{InverseImageOnCompJacDef}:
	\begin{align*}
	\Nm_\pi(\pi^*(\Nn)) = \det\left( \pi_{T,*} (\pi_T^*(\Nn)) \right) \otimes \det\left( \pi_{T,*}\Oo_{X\times_k T} \right)^{-1}.
	\end{align*}
	By the projection formula \cite[\href{https://stacks.math.columbia.edu/tag/01E8}{Tag 01E8}]{stacks-project} and the standard properties of determinants, we have:
	\begin{align*}
\Nm_\pi(\pi^*(\Nn)) &= \det\left( \Nn \otimes \pi_{T,*}\Oo_{X\times_k T}  \right) \otimes \det\left( \pi_{T,*}\Oo_{X\times_k T} \right)^{-1} \simeq \\
&\simeq  \Nn^n  \otimes  \det\left(\pi_{T,*}\Oo_{X\times_k T} \right) \otimes \det\left( \pi_{T,*}\Oo_{X\times_k T} \right)^{-1} \simeq \\
&\simeq \Nn^n.
\end{align*}	
\end{proof}

We compare now the Norm and the inverse image maps between the compactified Jacobians with the direct and inverse image maps between the Hilbert schemes of divisors, respectively.

\begin{prop}{\normalfont (Comparison of the direct image and the Norm map via the Abel map)}
For any line bundle $M$ of degree $e$ on $X$ and for any $d \geq 0$, there is a commutative diagram of algebraic stacks over $k$:
\[
\begin{tikzcd}
\Hilb_X^d \arrow{rrr}{\pi_*^d} \arrow{d}{\A_M^d} &[-25pt]  &[-25pt]  & \lHilb_Y^d \arrow{d}{\A_{\Nm_\pi(M)}^d} \\
\Jgen(X,-d+e) & \subseteq & \Jbar(X,-d+e) \arrow{r}{\Nm_\pi^{-d+e}} & J^{-d+e}(Y) 
\end{tikzcd}
\]
\end{prop}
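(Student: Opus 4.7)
The plan is to verify the commutativity functorially, on $T$-valued points for an arbitrary $k$-scheme $T$, by computing both compositions as explicit line bundles on $Y \times_k T$ and identifying them via the sheaf-theoretic formula obtained in the proof of Definition \ref{DirectImageForHilbDef} together with Lemma \ref{AssociatveFormulaForDet}.

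Fix a $T$-flat family $D \subseteq X \times_k T$ of effective generalized divisors of constant degree $d$, and denote by $\Ii_D \subseteq \Oo_{X \times_k T}$ its defining ideal sheaf; also denote, by abuse of notation, the pullback of $M$ to $X \times_k T$ again by $M$. The left-then-bottom composition sends $D$ to
\[
\Nm_\pi(\A_M^d(D)) = \Nm_\pi(\Ii_D \otimes M) = \det\bigl(\pi_{T,*}(\Ii_D \otimes M)\bigr) \otimes \det\bigl(\pi_{T,*}\Oo_{X \times_k T}\bigr)^{-1}
\]
by Definition \ref{NormOnMPureDef}. Since $\Ii_D$ is a $T$-flat family of torsion-free rank-$1$ sheaves on $X$ and $M$ is a line bundle on $X \times_k T$, Lemma \ref{AssociatveFormulaForDet} applies and gives
\[
\det\bigl(\pi_{T,*}(\Ii_D \otimes M)\bigr) \otimes \det\bigl(\pi_{T,*}\Oo_{X \times_k T}\bigr) \simeq \det(\pi_{T,*}\Ii_D) \otimes \det(\pi_{T,*}M).
\]
Combining the two displays with the formula $\Nm_\pi(M) = \det(\pi_{T,*}M) \otimes \det(\pi_{T,*}\Oo_{X \times_k T})^{-1}$ yields
\[
\Nm_\pi(\A_M^d(D)) \simeq \det(\pi_{T,*}\Ii_D) \otimes \det(\pi_{T,*}\Oo_{X \times_k T})^{-1} \otimes \Nm_\pi(M).
\]

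The top-then-right composition sends $D$ to $\A_{\Nm_\pi(M)}^d(\pi_*^d(D)) = \Ii_{\pi_*(D)} \otimes \Nm_\pi(M)$. From the proof of Definition \ref{DirectImageForHilbDef} (which uses that $\pi_{T,*}\Ii_D$ is locally free on $Y \times_k T$ because $Y$ is smooth), the defining ideal $\Ii_{\pi_*(D)} = \Fitt_0(\pi_{T,*}\Oo_D)$ is precisely the image of the canonical injection
\[
\det(\pi_{T,*}\Ii_D) \otimes \det(\pi_{T,*}\Oo_{X \times_k T})^{-1} \xhookrightarrow{\det(\varphi)} \Oo_{Y \times_k T},
\]
and therefore is isomorphic to $\det(\pi_{T,*}\Ii_D) \otimes \det(\pi_{T,*}\Oo_{X \times_k T})^{-1}$ as an abstract invertible sheaf. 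Tensoring with $\Nm_\pi(M)$ reproduces the same expression obtained above, so the two compositions agree as morphisms of stacks.

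The degree bookkeeping is consistent: $\deg \Ii_D = -d$ gives $\A_M^d(D) \in \Jgen(X,-d+e)$; on the other side, $\deg \pi_*(D) = d$ by Corollary \ref{DegreeCor} (here $Y$ is smooth), and the standard computation $\deg \Nm_\pi(M) = e$ places $\A_{\Nm_\pi(M)}^d(\pi_*(D))$ in $J^{-d+e}(Y)$, matching the range of $\Nm_\pi^{-d+e}$. I do not anticipate a genuine obstacle: the argument is a clean concatenation of the Fitting-ideal description of $\pi_*$ and the determinantal formula of Lemma \ref{AssociatveFormulaForDet}. The only point requiring mild care is ensuring that the resulting isomorphism is functorial in $T$, but this is automatic because pushforward along the finite flat map $\pi_T$, formation of determinants of locally free sheaves, and the defining map of the Fitting ideal all commute with arbitrary base change $T' \to T$.
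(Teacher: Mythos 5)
Your proposal is correct and follows essentially the same route as the paper: compute both compositions on $T$-points, identify the ideal sheaf of $\pi_*(D)$ with $\det(\pi_{T,*}\Ii_D)\otimes\det(\pi_{T,*}\Oo_{X\times_k T})^{-1}$ via the Fitting-ideal presentation, and reduce the comparison to the determinantal isomorphism of Lemma \ref{AssociatveFormulaForDet}. The added remarks on degree bookkeeping and base-change functoriality are consistent with, though not spelled out in, the paper's argument.
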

\begin{proof}
Let $T$ be any $k$-scheme, let $D$ be a $T$-flat family of effective divisors of degree $d$ on $X$ with ideal sheaf $\Ii$, and denote with $\Mm$ the pullback of $M$ to $X \times_k T$.  Following the bottom-left side of the square, combining Definitions \ref{TwistedAbel} and \ref{NormOnMPureDef} we get: \[
 \Nm_\pi^{-d+e}(\A_M^d(D)) =  \det\left( \pi_{T,*} (\Ii \otimes \Mm) \right) \otimes \det\left( \pi_{T,*}\Oo_{X\times_k T} \right)^{-1}.
\] Following the top-right side of the square, combining definitions \ref{DirectImageForHilbDef}, \ref{NormOnMPureDef} and \ref{TwistedAbel} we get:\[
 \A_{\Nm_\pi(M)}^d(\pi_*^d(D)) = \det\left( \pi_{T,*} (\Ii ) \right) \otimes  \det\left( \pi_{T,*} ( \Mm) \right) \otimes \det\left( \pi_{T,*}\Oo_{X\times_k T} \right)^{-2}.
\]
We are left to prove that: 
\begin{align*}
\det\left( \pi_{T,*} (\Ii \otimes \Mm) \right) \otimes \det\left( \pi_{T,*}\Oo_{X\times_k T} \right) \simeq \\ \simeq \det\left( \pi_{T,*} (\Ii ) \right) \otimes  \det\left( \pi_{T,*} ( \Mm) \right).
\end{align*}
Now, $\Ii$ is a $T$-flat family of generalized line bundles by hypothesis and $\Mm$ is a line bundle on $X \times_k T$ since it is the pull-back of a line bundle on $X$. Then, the assertion is true by Lemma \ref{AssociatveFormulaForDet}.
\end{proof}

\begin{prop}{\normalfont (Comparison of the inverse image maps via the Abel map)}
	For any line bundle $N$ of degree $e$ on $Y$ and any $d \geq 0$, there is a commutative diagram of algebraic stacks over $k$:
	\[
	\begin{tikzcd}
	\lHilb_Y^d \arrow{r}{\pi^*_d} \arrow{d}{\A_N^d}  & \lHilb_X^d
	\arrow{d}{\A_{\pi^*(N)}^d} \\
	J(Y,-d+e) \arrow{r}{\pi^*_{-d+e}} & J(X,-d+e)
	\end{tikzcd}
	\]
\end{prop}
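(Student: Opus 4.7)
The plan is to verify the commutativity of the diagram on $T$-valued points, for any $k$-scheme $T$, by chasing a $T$-flat family of effective Cartier divisors through both paths and showing the resulting line bundles on $X \times_k T$ are canonically isomorphic. Let $D \subseteq Y \times_k T$ be a $T$-point of $\lHilb_Y^d$, with (invertible) ideal sheaf $\Ii_D \subseteq \Oo_{Y \times_k T}$, and write $\Nn$ for the pullback of $N$ to $Y \times_k T$.

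Following the bottom-left path, Definition \ref{TwistedAbel} gives $\A_N^d(D) = \Ii_D \otimes \Nn \in J(Y,-d+e)(T)$, and then Definition \ref{InverseImageOnCompJacDef} applied to this line bundle yields $\pi^*_{-d+e}(\A_N^d(D)) = \pi_T^*(\Ii_D \otimes \Nn)$. Following the top-right path, Definition \ref{InverseImageForHilb} gives $\pi^*_d(D)$ as the subscheme of $X \times_k T$ with ideal sheaf $\pi_T^{-1}(\Ii_D) \cdot \Oo_{X \times_k T}$; by Remark \ref{InverseImageVsPullback}, this ideal sheaf is canonically isomorphic to $\pi_T^*(\Ii_D)$ (as an abstract $\Oo_{X \times_k T}$-module, since $\pi_T$ is flat). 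Applying $\A_{\pi^*(N)}^d$ then produces $\pi_T^*(\Ii_D) \otimes \pi_T^*(\Nn)$.

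The comparison therefore reduces to the standard canonical isomorphism $\pi_T^*(\Ii_D) \otimes \pi_T^*(\Nn) \simeq \pi_T^*(\Ii_D \otimes \Nn)$, i.e.\ the fact that the pullback functor $\pi_T^*$ commutes with tensor products. The whole construction is clearly functorial in $T$, and by Yoneda gives commutativity of the diagram of algebraic stacks.

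The only subtlety — and the main point one needs to keep clean — is to identify the ideal-sheaf description of $\pi^*_d(D)$ coming from Definition \ref{InverseImageForHilb} with the sheaf-theoretic pullback $\pi_T^*(\Ii_D)$; this is the content of Remark \ref{InverseImageVsPullbackBundles} applied fiberwise, and is unambiguous here because $D$ is Cartier, so $\Ii_D$ is locally principal and its pullback injects into $\Oo_{X \times_k T}$. Once this identification is made, the rest is formal.
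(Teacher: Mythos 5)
Your proof is correct and follows essentially the same route as the paper's: both chase a $T$-family of Cartier divisors through the two paths, identify the inverse-image ideal $\pi_T^{-1}(\Ii_D)\cdot\Oo_{X\times_k T}$ with the pullback $\pi_T^*(\Ii_D)$ (the paper's Remark \ref{InverseImageVsPullbackBundles}), and conclude by the compatibility of $\pi_T^*$ with tensor products (the paper's Proposition \ref{PropertiesOfNormOnCompJac}(2)). Your write-up is in fact slightly more careful than the paper's, which contains small notational slips in its displayed chain of identifications.
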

\begin{proof}
Let $T$ be any $k$-scheme, let $D$ be a $T$-flat family of divisors of degree $d$ on $Y$ with ideal sheaf $\Ii \subseteq Y \times_k T$, and denote with $\Nn$ the pullback of $N$ to $Y \times_k T$. By Definition \ref{TwistedAbel}, Proposition \ref{PropertiesOfNormOnCompJac}(2) and Remark \ref{InverseImageVsPullbackBundles}, we have: 
\begin{align*}
\pi^*_d(\A_N^d(D)) &= \pi_T^*(\Ii \otimes \Nn) \simeq \pi_T^*(\Ii) \otimes \pi_T^*(\Nn) \simeq \\
&\simeq \A_{\pi^*(N)}^d(\pi^*_{-d+e}(\Nn)).
\end{align*}
\end{proof}

\section{The fibers of the Norm map and the Prym stack}\label{SectionPrym}

In the present section we study the fibers of the Norm map defined in Section \ref{SectionNormOnJbar}. The purpose is to generalize the  Prym scheme associated to a finite, flat morphism between projective curves to the context of torsion-free sheaves of rank 1. Fix an algebraically closed field $k$ of characteristic 0. Let $X, Y$ be projective curves over $k$, such that $Y$ is smooth, and let $\pi: X \rightarrow Y$ be a finite, flat morphism of degree $n$. We start by recalling the following definition.
\begin{defi}
The \textit{Prym scheme} of $X$ associated to $\pi$  is the locus  $\Pr(X,Y) \subseteq J(X)$ of line bundles whose Norm with respect to $\pi$ is trivial. In other words: \[
\Pr(X,Y) = \{ \Ll \in J(X): \Nm_\pi(\Ll) \simeq \Oo_Y \}.
\]
\end{defi}

We can now extend the definition of the Prym stack to the context of torsion-free rank 1 sheaves using the Norm map defined in the previous section.

\begin{defi}
The \textit{Prym stack} of $X$  associated to $\pi$ is the locus  $\Prbar(X,Y) \subseteq \Jbar(X)$ of torsion-free rank-1 sheaves whose Norm with respect to $\pi$ is trivial. In other words: \[
\Prbar(X,Y) := \{ (\Ll, \lambda): \Ll \in \Jbar(X)\ \mathrm{ and }\ \lambda: \Nm_\pi(\Ll) \xrightarrow{{}_\sim} \Oo_Y \}.
\]
\end{defi}
\begin{rmk}
It follows from the definition that: \[
\Pr(X,Y) = \Prbar(X,Y) \cap J(X).
\]
\end{rmk}

Since being locally-free is an open condition,  $\Pr(X,Y)$ is open in $\Prbar(X,Y)$.

\vspace{1em}

The Prym stack is a fiber of the Norm map associated to $\pi$. In the present section, we study the fibers of the Norm map assuming that the curve $X$ is reduced with locally planar singularities. The study of such fibers involves the study of the fibers of the direct image map for effective divisors defined in Section \ref{SectionNormOnJbar}, and the study of the fibers of the Hilbert-Chow morphism. We start from the last.

\subsection{The fibers of the Hilbert-Chow morphism}
In this section, we refer to \cite{Ber} for notations and known results.
%USARE BERTIN ANICHé FGI+
\begin{defi}
Let $X$ be a projective curve over $k$ and let $d$ be a positive integer. The $d$-\textit{symmetric power} $X^{(d)}$ of $X$ is  the quotient $X^d/\Sigma_d$ of the Cartesian product $X^n$ by the action of the symmetric group $\Sigma_d$ in $d$-letters permuting the factors. Note that, for $d=1$, $X^{(1)}=X$. For $d=0$, we put $X^{(0)}=\{0\}$.
The \textit{scheme of effective $0$-cycles} (or  of \textit{effective Weil divisors}) of $X$ is the algebraic scheme: \[
\WDiv^+(X) = \bigsqcup_{d \geq 0} X^{(d)} =  \bigsqcup_{d \geq 0} \WDiv^d(X).
\]
The \textit{Hilbert-Chow morphism}  of degree $d$ associated to $X$ is the map of schemes 
\begin{align*}
\rho_X^d: \Hilb_X^d &\rightarrow X^{(d)} \\
D &\mapsto \sum_{x \textrm{ cod 1}} \deg_x(D) \cdot [x]
\end{align*}
Denote with $\lrho_X^d$ the restriction of $\rho_X^d$ to the subscheme $\lHilb_X^d \subseteq \Hilb_X^d$ parametrizing Cartier divisors. The collection of Hilbert-Chow morphisms in non-negative degrees gives rise to the Hilbert-Chow morphism in any degree: \[
\rho_X:  \Hilb_X \rightarrow \WDiv^+(X).
\]
Finally, denote with $\lrho_X$ the restriction of $\rho_X$ to $\lHilb_X \subseteq \Hilb_X$.
\end{defi}

\begin{rmk}
If $X$ is a smooth projective curve, then $\rho_X$ is an isomorphism and coincides with $\lrho_X$.
\end{rmk}

We study now the fibers of the Hilbert-Chow morphism in the case when $X$ is  reduced  with locally planar singularities. 

\begin{prop}\label{fibersOfHC}
Suppose that $X$ is reduced with locally planar singularities and that $Y$ is smooth. Let $W \in \WDiv^d(X)$ be an effective Weil divisor of degree $d$ on $X$ and let $\rho_X^{-1}(W)$ be the corresponding fiber in $\Hilb_X$. The locus $\lrho_X^{-1}(W)$ of Cartier divisors in $\rho_X^{-1}(W)$ is an open and dense subset of $\rho_X^{-1}(W)$.
\end{prop}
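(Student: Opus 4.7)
My plan is to split the proposition into an openness statement, which is essentially formal, and a density statement, which is the substantive part and where the locally planar hypothesis enters. Openness is immediate: by construction $\lHilb_X^d \subseteq \Hilb_X^d$ is an open subscheme (being locally principal is an open condition on a coherent sheaf of ideals), hence its intersection with the fiber $\rho_X^{-1}(W)$ is open, i.e.\ $\lrho_X^{-1}(W)$ is open in $\rho_X^{-1}(W)$.

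For density, my plan is to reduce to a local question at each point of the support of $W$ via a product decomposition of the fiber. Writing $W = \sum_{i=1}^{s} n_i [x_i]$ with distinct closed points $x_i$, every length-$d$ subscheme of $X$ mapping to $W$ under $\rho_X$ decomposes uniquely as a disjoint union of length-$n_i$ subschemes supported at each $x_i$. This should yield a canonical isomorphism
\[
\rho_X^{-1}(W) \;\simeq\; \prod_{i=1}^{s} H_i, \qquad H_i := \Hilb_{X,\{x_i\}}^{n_i},
\]
where $H_i$ is the punctual Hilbert scheme parametrizing length-$n_i$ closed subschemes of $X$ supported set-theoretically at $x_i$. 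Under this isomorphism, $\lrho_X^{-1}(W)$ corresponds to the product $\prod_i H_i^{\ell}$ of the Cartier loci inside each factor. Since a product of open subsets of a product of topological spaces is dense if and only if each factor is, the problem reduces to showing that $H_i^{\ell}$ is open and dense in $H_i$ for every $i$.

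At a smooth point $x_i$, the local ring $\Oo_{X,x_i}$ is a discrete valuation ring, so the unique length-$n_i$ ideal supported at $x_i$ is $\m_{x_i}^{n_i}$, which is already principal; nothing remains to prove. At a singular point, my plan is to invoke the key input provided by the locally planar hypothesis: the punctual Hilbert scheme of a reduced planar curve singularity is irreducible, a classical fact going back to Brian\c{c}on and Iarrobino, and developed for the present setting in \cite{AIK, BGS} (see also \cite{Ber}). Granting irreducibility of $H_i$, the open subset $H_i^{\ell}$ is automatically dense as soon as it is non-empty, and non-emptiness reduces to the local algebraic statement that the local ring at a planar curve singularity admits principal ideals of the required colength $n_i$, which one obtains by choosing a sufficiently generic nonzerodivisor in $\m_{x_i}$ (and combining with suitable powers as needed).

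The main obstacle is the irreducibility of the punctual Hilbert scheme at a planar curve singularity: this is precisely the substantive geometric content that the locally planar hypothesis encodes, and the statement is known to fail without it. Once this input is available, the remainder of the argument is a formal product-of-opens manipulation combined with the observation that open non-empty subsets of irreducible spaces are automatically dense.
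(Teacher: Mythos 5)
Your strategy coincides with the paper's up to the last step: openness of $\lHilb_X\subseteq\Hilb_X$ gives openness of $\lrho_X^{-1}(W)$ in $\rho_X^{-1}(W)$, and the fiber over $W=\sum_i n_i[x_i]$ factors as the product of the punctual Hilbert schemes $\Hilb_{X,x_i}^{n_i}$, reducing everything to density of the Cartier locus in each factor. The gap is the key input you invoke at a singular point: the punctual Hilbert scheme of a reduced planar \emph{curve} singularity is \emph{not} irreducible in general. The Brian\c{c}on--Iarrobino irreducibility theorem concerns the punctual Hilbert scheme of a smooth \emph{surface}, and what \cite{AIK} proves is irreducibility of the full Hilbert scheme $\Hilb^d$ of an integral planar curve, not of its punctual fibers. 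A concrete counterexample is the node $R=k[[x,y]]/(xy)$ with $n=3$: the colength-$3$ ideals supported at the origin are $\m^2=(x^2,y^2)$ together with the two families $(x+dy^2,y^3)$ and $(y+cx^2,x^3)$, each of which closes up to a $\Pp^1$ by adding the point $[\m^2]$; so $\Hilb^3_{X,x}$ is two $\Pp^1$'s meeting at one point, hence reducible. Consequently ``open and non-empty, hence dense'' fails as an argument: exhibiting one principal ideal of the right colength (your generic nonzerodivisor) does not rule out an entire irreducible component consisting of non-Cartier subschemes.

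What the paper uses instead, and what repairs your argument with no other changes, is \cite[Proposition 2.3]{BGS}: for a reduced planar curve singularity the Cartier locus $\lHilb_{X,x}^{n}$ is exactly the smooth locus of $\Hilb_{X,x}^{n}$, hence dense in \emph{every} irreducible component; no irreducibility is needed. In the nodal example this is visible directly: on each $\Pp^1$ the non-principal points are just $[(x,y^3)]$ (resp.\ $[(y,x^3)]$) and $[\m^2]$, while $(x+dy^2,y^3)=(x+dy^2)$ is principal for $d\neq 0$. If you replace your irreducibility claim by this componentwise density statement, the rest of your reduction (product decomposition, triviality at smooth points where $\Oo_{X,x}$ is a DVR) goes through as written.
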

\begin{proof}
	First, note that $\lrho_X^{-1}(W) = \rho_X^{-1}(W) \cap \lHilb_X$. Since $\lHilb_X \subseteq \Hilb_X$ is open (see for example \cite[Fact 2.4]{MRVFine}), $\lrho_X^{-1}(W)$ is open in $\rho_X^{-1}(W)$.
	
	To prove that it is dense, let $W= \sum_{i=1}^s n_i \cdot [x_i]$ with the $x_i$'s are $s$ distinct points. For each $i$, the fiber of $\rho_X$ over the cycle $n_i \cdot [x_i]$ is equal by definition to the punctual Hilbert scheme $\Hilb_{X,x_i}^{n_i}$  parametrizing $0$-dimensional subschemes of $X$ supported at $x_i$ having length $n_i$ over $k$. Since the $x_i$ are distinct, we have: \begin{align*}
	\rho_X^{-1}(W) &=\rho_X^{-1}\left( \sum_{i=1}^s n_i \cdot [x_i]\right) =\\
	&=\prod_{i=1}^s \rho_X^{-1}(n_i \cdot [x_i]) = \prod_{i=1}^s \Hilb_{X,x_i}^{n_i}.
	\end{align*}
	For each $i$, let  $\lHilb_{X,x_i}^{n_i}=\Hilb_{X,x_i}^{n_i} \cap \lHilb_X$. An effective divisor is Cartier if and only if it is Cartier at each point of its support, hence: \[
	\lrho_X^{-1}(W)=\prod_{i=1}^s \left(\Hilb_{X,x_i}^{n_i} \cap \lHilb_X\right) = \prod_{i=1}^s \lHilb_{X,x_i}^{n_i}.
	\]
	For each $i$, $\lHilb_{X,x_i}^{n_i}$ is the smooth locus of $\Hilb_{X,x_i}^{n_i}$ by \cite[Proposition  2.3]{BGS}, so it is dense; hence $\lrho_X^{-1}(W)$ is dense in $\rho_X^{-1}(W)$.
\end{proof}

\subsection{The fibers of the direct image between Hilbert schemes}

In the present subsection, we study the fibers of the direct image map $\pi_*$ defined between the Hilbert schemes of generalized divisors of $X$ and $Y$. 

\vspace{1em}

First, we introduce a similar notion for Weil divisors and we see how it relates to the direct image for generalized divisors.

\begin{defi}
The \textit{direct image for Weil divisors} associated to $\pi$ is the morphism of schemes: \[
\pi_*: \WDiv^+(X) \rightarrow \WDiv^+(Y)
\]
given on the level of points by \[
\sum_{i=1}^d n_i \cdot [x_i] \longmapsto \sum_{i=1}^d n_i \cdot [\pi(x_i)].
\]
\end{defi}

\begin{prop}\label{DirectImageVsChow}
Assume that $Y$ is smooth. There is a commutative square of schemes over $k$:
\[
\begin{tikzcd}
\Hilb_X \arrow{r}{\pi_*} \arrow{d}{\rho_X}  & \lHilb_Y
\arrow{d}{\rho_Y} \\
\WDiv(X) \arrow{r}{\pi_*} & \WDiv(Y).
\end{tikzcd}
\]
\end{prop}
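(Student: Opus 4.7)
The plan is to verify the commutativity of the square by reducing to the pointwise (i.e.\ $k$-valued) statement and then invoking Proposition \ref{FiberDegree}. First I would argue that it is enough to check the equality $\rho_Y \circ \pi_* = \pi_* \circ \rho_X$ on $T$-valued points, and in fact fiberwise over $T$: the Hilbert–Chow morphism is compatible with base change, the direct image $\pi_*$ on Hilbert schemes was built in Definition/Lemma \ref{DirectImageForHilbDef} using Fitting ideals (whose formation commutes with base change by the remark after Definition/Lemma \ref{FittingDef}), and the direct image on Weil divisors is induced pointwise by the set-theoretic map $\pi$. Thus one reduces to showing that for a fixed effective generalized divisor $D \in \GDiv^+(X)$ the two cycles
\[
\rho_Y(\pi_*(D)) \quad \text{and} \quad \pi_*(\rho_X(D))
\]
in $\WDiv^+(Y)$ coincide.

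Second, I would unwind both sides using the definition of $\rho_X$ and of the direct image of Weil divisors:
\[
\pi_*(\rho_X(D)) \;=\; \sum_{x \text{ cod }1} \deg_x(D)\cdot [\pi(x)] \;=\; \sum_{y \text{ cod }1} \left(\sum_{\pi(x)=y}\deg_x(D)\right)\cdot [y],
\]
whereas by definition
\[
\rho_Y(\pi_*(D)) \;=\; \sum_{y \text{ cod }1} \deg_y(\pi_*(D))\cdot [y].
\]
Comparing these two expressions coefficient by coefficient, the claim reduces to the identity $\deg_y(\pi_*(D)) = \sum_{\pi(x)=y}\deg_x(D)$ for every codimension-1 point $y \in Y$.

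Third, this identity is exactly the content of Proposition \ref{FiberDegree} combined with the fact that $Y$ is smooth. Indeed, smoothness of $Y$ forces $\pi_*(D)$ to be a Cartier divisor, so the hypothesis ``$\pi_*(D)$ is locally principal at $y$'' in Proposition \ref{FiberDegree} is automatic at every $y$, and the proposition gives the required local equality of degrees. Summing over $y$ yields the desired equality of zero-cycles on $Y$, proving commutativity on $k$-valued points and hence, by the base-change compatibility argument above, on $T$-valued points for every $k$-scheme $T$.

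The main technical obstacle, which is really a sanity check rather than a substantial difficulty, is to ensure that the relative Hilbert–Chow morphism really does compute the fiberwise cycle and is well-behaved under pullback along $T \to \Spec k$, so that the problem genuinely reduces to a statement about single generalized divisors. Once this reduction is in place, the whole argument is a bookkeeping exercise on Proposition \ref{FiberDegree}, which already did the hard work (via Fitting ideals and multiplicativity of lengths along $\pi$).
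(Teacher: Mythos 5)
Your proposal is correct and follows essentially the same route as the paper: both unwind the two compositions into zero-cycles and reduce the commutativity to the identity $\deg_y(\pi_*(D)) = \sum_{\pi(x)=y}\deg_x(D)$, which is Proposition \ref{FiberDegree} applied at every $y$ since smoothness of $Y$ makes $\pi_*(D)$ automatically Cartier. Your preliminary reduction to $k$-valued points via base-change compatibility of Fitting ideals is a sensible extra layer of care that the paper leaves implicit, but it does not change the substance of the argument.
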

\begin{proof}
Let $D$ be an effective generalized divisor on $X$. Following the bottom-left side of the square, we obtain: \[
\pi_*(\rho_X(D)) = \sum_{x \textrm{ cod 1}} \deg_x(D) \cdot [\pi(x)],
\]
while on the top-right side of the square we have \[
\rho_Y(\pi_*(D)) = \sum_{y \textrm{ cod 1}} \deg_y(\pi_*(D)) \cdot [y].
\]
Since $Y$ is smooth, $\pi_*(D)$ is Cartier locally at each point of $Y$. By Proposition \ref{FiberDegree}, for any point $y$ in codimension 1 of $Y$ \[
\deg_y(\pi_*(D)) = \sum_{\pi(x)=y} \deg_x(D).
\]
Then we compute: \begin{align*}
\rho_Y(\pi_*(D)) &= \sum_{y \textrm{ cod 1}} \deg_y(\pi_*(D)) \cdot [y] = \\
&= \sum_{y \textrm{ cod 1}} \left(\sum_{\pi(x)=y} \deg_x(D)\right) \cdot [y] = \\
&= \sum_{y \textrm{ cod 1}} \sum_{\pi(x)=y} \deg_x(D) \cdot [\pi_*(x)] = \\
&= \sum_{x \textrm{ cod 1}}  \deg_x(D) \cdot [y] = \pi_*(\rho_X(D)).
\end{align*}
\end{proof}

We are now ready to study the fibers of the direct image for generalized divisors.

\begin{prop}\label{fibersOfPiStar}
Assume that $Y$ is smooth and $X$ is reduced with locally planar singularities and . Let $E \in \Hilb_Y$ be an effective divisors of degree $d$ on $Y$ and let $\pi_*^{-1}(E)$ be the corresponding  fiber in $\Hilb_X$. Then, $\pi_*^{-1}(E) \cap \lHilb_X$ is an open and non-empty dense subset of $\pi_*^{-1}(E)$.
\end{prop}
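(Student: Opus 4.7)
My plan is to reduce the statement to Proposition~\ref{fibersOfHC} via the commutative diagram of Proposition~\ref{DirectImageVsChow}. Openness of $\pi_*^{-1}(E) \cap \lHilb_X$ in $\pi_*^{-1}(E)$ is immediate from the openness of $\lHilb_X \subseteq \Hilb_X$, so the content lies in density and non-emptiness.

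First, since $Y$ is smooth, $\rho_Y$ is an isomorphism, so $E$ is determined by its Weil cycle $W_E = \rho_Y(E) = \sum_{i=1}^s n_i [y_i]$. From $\rho_Y \circ \pi_* = \pi_* \circ \rho_X$ together with the injectivity of $\rho_Y$, a divisor $D \in \Hilb_X$ lies in $\pi_*^{-1}(E)$ if and only if $\pi_*(\rho_X(D)) = W_E$ in $\WDiv^+(Y)$. Since $\pi$ has finite fibers, the set-theoretic preimage of $W_E$ under the Weil divisor map $\pi_*: \WDiv^+(X) \to \WDiv^+(Y)$ consists of finitely many Weil divisors $W_1, \ldots, W_r$, each obtained by distributing the multiplicities $n_i$ among the finitely many preimages $x \in \pi^{-1}(y_i)$. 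This gives a topological decomposition
$$\pi_*^{-1}(E) = \bigsqcup_{k=1}^{r} \rho_X^{-1}(W_k)$$
as a finite disjoint union of closed subsets of $\Hilb_X$.

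Then I would apply Proposition~\ref{fibersOfHC} to each $W_k$, which ensures that $\lrho_X^{-1}(W_k) = \rho_X^{-1}(W_k) \cap \lHilb_X$ is open and dense in $\rho_X^{-1}(W_k)$. Taking the disjoint union yields
$$\pi_*^{-1}(E) \cap \lHilb_X = \bigsqcup_{k=1}^{r} \lrho_X^{-1}(W_k),$$
which is dense in $\pi_*^{-1}(E)$. For non-emptiness, I would observe that a finite flat morphism of positive degree is surjective (open by flatness, closed by finiteness), so choosing one point $x_i \in \pi^{-1}(y_i)$ for each $i$ produces a Weil divisor $W = \sum_i n_i [x_i]$ on $X$ with $\pi_*(W) = W_E$; thus $r \geq 1$. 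The surjectivity of the Hilbert--Chow morphism onto the symmetric power $X^{(d)}$ shows that $\rho_X^{-1}(W)$ is non-empty, and then Proposition~\ref{fibersOfHC} guarantees $\lrho_X^{-1}(W) \neq \emptyset$.

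The essential step is the set-theoretic decomposition of the fiber, which isolates the singular behaviour of $X$ into the fibers of $\rho_X$ already treated in Proposition~\ref{fibersOfHC}; once this reduction is in place, the remaining arguments are routine topological and surjectivity facts, and I expect no serious technical obstacle.
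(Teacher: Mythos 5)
Your proof is correct and follows essentially the same route as the paper: both use the commutative square of Proposition \ref{DirectImageVsChow} and the fact that $\rho_Y$ is an isomorphism to decompose $\pi_*^{-1}(E)$ set-theoretically into a finite disjoint union of Hilbert--Chow fibers $\rho_X^{-1}(W_k)$, and then apply Proposition \ref{fibersOfHC} to each piece. The only (harmless) divergence is non-emptiness, which the paper obtains by citing Corollary \ref{SurjDirectImage}, whereas you rederive it directly from the surjectivity of $\pi$ and of the Hilbert--Chow morphism.
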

\begin{proof}
By Proposition \ref{DirectImageVsChow}, there is a commutative diagram of schemes over $k$: \[
\begin{tikzcd}
\Hilb_X \arrow{r}{\pi_*} \arrow{d}{\rho_X}  & \lHilb_Y
\arrow{d}{\rho_Y} \\
\WDiv(X) \arrow{r}{\pi_*} & \WDiv(Y).
\end{tikzcd}
\]
First note that, since $Y$ is smooth, the map $\pi_*$ is surjective by Corollary \ref{SurjDirectImage}, hence $\pi_*^{-1}(E)$ is non-empty. Moreover, the Hilbert-Chow morphism $\rho_Y$ is an isomorphism. Then, \[
\pi_*^{-1}(E)=\rho_X^{-1}\pi_*^{-1}(\rho_Y(E)).
\]
Let $S=\{ y_1, \dots, y_r\}$ be the support of $E$ and let $d_i= \deg_{y_i} E$ for each $i$; then \[
\rho_Y(E) = \sum_{i=1}^r d_i \cdot [y_i].
\]
For each $i$, denote with $\{ x_i^1, \dots, x_i^{n_i} \}$ the discrete fiber of $\pi$ over $y_i$. Since points can occur with multeplicity in the geometric fibers, $n_i \leq n$ holds for each $i$. The fiber of $ \sum_{i=1}^r d_i \cdot [y_i]$ is then the discrete subset of $\WDiv(X)$ given by: \[
\pi_*^{-1}\left( \sum_{i=1}^r d_i \cdot [y_i]\right)= \left\{\sum_{i=1}^{r} \sum_{j=1}^{n_i} c_{ij} \cdot [x_i^j] \ \middle|\ c_{ij} \in \Z_{\geq 0}, \  \sum_{j=1}^{n_i} c_{ij} = d_i \textrm{ for any } i\right\}.
\]

Denote with $Z$ the set of all tuples $(c_{ij})$ of positive integers with the conditions that $\sum_{j=1}^{n_i} c_{ij} = d_i$ for any $i$.  Then, we can write the previous set as \[
\pi_*^{-1}\left( \sum_{i=1}^r d_i \cdot [y_i]\right) = \bigsqcup_{(c_{ij}) \in Z} \left\{ \sum_{i=1}^{r} \sum_{j=1}^{n_i} c_{ij} \cdot [x_i^j] \right\}.
\]
Then, applying $\rho_*^{-1}$, we otbain: \begin{align*}
\rho_X^{-1}\pi_*^{-1}(\rho_Y(E)) &= \rho_X^{-1}\pi_*^{-1}\left( \sum_{i=1}^r d_i \cdot [y_i]\right) =\\
&= \rho_X^{-1}\left( \bigsqcup_{(c_{ij}) \in Z} \left\{ \sum_{i=1}^{r} \sum_{j=1}^{n_i} c_{ij} \cdot [x_i^j] \right\} \right) = \\
&= \bigsqcup_{(c_{ij}) \in Z} \rho_X^{-1}\left(   \sum_{i=1}^{r} \sum_{j=1}^{n_i} c_{ij} \cdot [x_i^j]  \right),
\end{align*}
where the last disjoint union is a disjoint union of topological subspaces of $\Hilb_X$ in the same connected component. Then, $\pi_*^{-1}(E)$ is a non-emtpy disjoint uniont of a finite number of fibers of the Hilbert-Chow morphism. By Proposition \ref{fibersOfHC}, the intersection of any such fiber with the Cartier locus $\lHilb_X$ is open and dense in the same fiber.  Taking the disjoint union,  $\pi_*^{-1}(E) \cap \lHilb_X$ is a non-empty open and dense subset of $\pi_*^{-1}(E)$.
\end{proof}

\subsection{The fibers of the Norm map and $\Prbar(X,Y)$}
In this subsection, assuming that $Y$ is smooth and $X$ is reduced with locally planar singularities, we prove that the generalized Prym of $X$ with respect to $Y$ is non-empty, open and dense in the Prym stack of $X$ with respect to $Y$. 
The theorem is based upon the following two auxiliaries proposition. 

\begin{prop}\label{SurjNorm}
Assume that $Y$ is smooth and $X$ is reduced with locally planar singularities and let $\Nn \in J(Y)$ be any line bundle on $Y$. Then, there is a line bundle $\Ll$ on $X$ such that $\Nm_\pi(\Ll) \simeq \Nn$.
\end{prop}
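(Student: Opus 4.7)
The plan is to reduce the problem to lifting effective Cartier divisors through the map $\pi_* \colon \Hilb_X \to \lHilb_Y$, and then to invoke the compatibility between the Norm map and the Abel map with trivial twist. First, since $Y$ is a smooth projective curve, every line bundle $\Nn \in J(Y)$ can be written as $\Nn \simeq \Oo_Y(E_1 - E_2)$ for two effective Cartier divisors $E_1, E_2$ on $Y$; equivalently, $\Nn \simeq \Ii_{E_2} \otimes \Ii_{E_1}^{-1}$. By the multiplicativity of the Norm on line bundles (Proposition \ref{PropertiesOfNormOnCompJac}(1)), it will be enough to construct, for each effective Cartier divisor $E$ on $Y$, a line bundle on $X$ whose Norm is isomorphic to $\Ii_E$; the desired $\Ll$ is then obtained as a tensor quotient of two such lifts.

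For this core step, I would appeal to Proposition \ref{fibersOfPiStar}: under the hypothesis that $X$ is reduced with locally planar singularities, the fiber $\pi_*^{-1}(E) \cap \lHilb_X$ is non-empty. Picking any effective Cartier divisor $D$ on $X$ with $\pi_*(D) = E$, the defining ideal $\Ii_D$ is a genuine line bundle on $X$. The Norm of $\Ii_D$ can then be computed from the compatibility square at the end of Section \ref{SectionNormOnJbar}, applied with the trivial twist $M = \Oo_X$: since $\Nm_\pi(\Oo_X) \simeq \Oo_Y$, $\A_{\Oo_X}(D) = \Ii_D$ and $\A_{\Oo_Y}(\pi_*(D)) = \Ii_E$, the diagram yields $\Nm_\pi(\Ii_D) \simeq \Ii_E$. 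Alternatively one can argue more directly using Corollary \ref{NmVsPiStar} and Proposition \ref{GeneralizedDetFormula}, but the Abel-map route is the most transparent.

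Putting the two steps together, I would choose Cartier lifts $D_1, D_2 \in \lHilb_X$ with $\pi_*(D_i) = E_i$ and set $\Ll := \Ii_{D_2} \otimes \Ii_{D_1}^{-1} \simeq \Oo_X(D_1 - D_2)$. Then $\Ll$ is a line bundle on $X$, and by Proposition \ref{PropertiesOfNormOnCompJac}(1) applied twice,
\[
\Nm_\pi(\Ll) \simeq \Nm_\pi(\Ii_{D_2}) \otimes \Nm_\pi(\Ii_{D_1})^{-1} \simeq \Ii_{E_2} \otimes \Ii_{E_1}^{-1} \simeq \Nn.
\]
As a sanity check the degrees match, because Corollary \ref{DegreeCor} gives $\deg D_i = \deg E_i$, whence $\deg \Ll = \deg \Nn$.

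The main obstacle here is the existence of a Cartier lift of an effective Cartier divisor $E$ on $Y$, which is precisely the content of Proposition \ref{fibersOfPiStar}; its proof relies in turn on the surjectivity of $\pi_*^+$ on effective divisors (Corollary \ref{SurjDirectImage}) together with the density of Cartier divisors inside the punctual Hilbert schemes of curves with locally planar singularities (Proposition \ref{fibersOfHC}). Once this non-triviality is granted, the rest of the argument is a formal assembly of the compatibility results already collected.
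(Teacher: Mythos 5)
Your proposal is correct and follows essentially the same route as the paper: decompose $\Nn$ as a difference of effective Cartier divisors on $Y$, lift each effective piece to an effective Cartier divisor on $X$ using the non-emptiness of $\pi_*^{-1}(E)\cap\lHilb_X$ from Proposition \ref{fibersOfPiStar}, and conclude via the compatibility of $\Nm_\pi$ with the direct image of divisors (the paper invokes Corollary \ref{NmVsPiStar} on the single difference divisor $H-K$, whereas you apply the Abel-map square to each piece and then use multiplicativity of the Norm — a purely cosmetic difference).
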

\begin{proof}
Let $D$ be a Cartier divisor on $Y$ such that $\Nn \in [D]$ and let $D=E-F$ where $E,F$ are effective Cartier divisors by Lemma \ref{DifferenceOfEffective}. By Proposition \ref{fibersOfPiStar} there are effective Cartier divisors $H,K$ on $X$ such that $\pi_*(H)=E$ and $\pi_*(K)=F$. Then, by Lemma \ref{linear} $\pi_*(H-K)=D$. Set $H-K=C$ and let $\Ll$ be the defining ideal of $C$, seen as a line bundle.  By Lemma \ref{NmVsPiStar} we conclude that $\Nm_\pi(\Ll)\simeq \Nn$.
\end{proof}

\begin{prop}\label{fibersOfNm}
Assume that $Y$ is smooth  and $X$ is reduced with locally planar singularities and let $\Nn \in J(Y)$ be any line bundle on $Y$. Then, the fiber $\Nm_\pi^{-1}(\Nn)$ is non-empty and contains $\Nm_\pi^{-1}(\Nn) \cap J(Y)$ as an open and dense substack.
\end{prop}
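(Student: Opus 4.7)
The plan is to prove the three conclusions separately: non-emptiness, openness, and density of $\Nm_\pi^{-1}(\Nn) \cap J(X)$ inside $\Nm_\pi^{-1}(\Nn)$.

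Non-emptiness is immediate from Proposition \ref{SurjNorm}, which already produces a line bundle $\Ll \in J(X)$ with $\Nm_\pi(\Ll) \simeq \Nn$, giving a point of $\Nm_\pi^{-1}(\Nn) \cap J(X)$. Openness follows because $J(X) \subseteq \Jbar(X)$ is an open substack (the locally-free condition is open), so its intersection with $\Nm_\pi^{-1}(\Nn)$ is open in $\Nm_\pi^{-1}(\Nn)$.

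The core step is density, which I would deduce from Proposition \ref{fibersOfPiStar} via the twisted Abel map. Fix any $\Ll \in \Nm_\pi^{-1}(\Nn)$ and choose a line bundle $M$ on $X$ of degree $e$ large enough so that for some integer $d$ the twisted Abel map $\A_M^d \colon \Hilb_X^d \to \Jbar(X, -d+e)$ surjects onto the connected component of $\Jbar(X)$ containing $\Ll$; this uses a Riemann--Roch type estimate guaranteeing $h^0(X, M \otimes \Ll^{-1}) > 0$, so that $\Ll$ arises as the $M$-twist of the fractional ideal of some effective generalized divisor. Pick $D \in \Hilb_X^d$ with $\A_M^d(D) \simeq \Ll$ and set $E := \pi_*^d(D) \in \lHilb_Y^d$. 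The commutative square from Section \ref{SectionNormOnJbar} relating the Abel maps with $\pi_*$ and $\Nm_\pi$ gives
\[ \A_{\Nm_\pi(M)}^d(E) \simeq \Nm_\pi(\A_M^d(D)) \simeq \Nm_\pi(\Ll) \simeq \Nn. \]
By Proposition \ref{fibersOfPiStar}, $\pi_*^{-1}(E) \cap \lHilb_X$ is open and dense in $\pi_*^{-1}(E)$, so $D$ lies in its closure. Since $\A_M^d$ is a morphism, $\Ll \simeq \A_M^d(D)$ lies in the closure of $\A_M^d\big(\pi_*^{-1}(E) \cap \lHilb_X\big)$; each point of the latter image is a line bundle (the Abel twist of a Cartier divisor) and has norm equal to $\A_{\Nm_\pi(M)}^d(E) \simeq \Nn$ by commutativity, hence lies in $\Nm_\pi^{-1}(\Nn) \cap J(X)$. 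Therefore $\Ll$ is a specialization of points of $\Nm_\pi^{-1}(\Nn) \cap J(X)$, proving density.

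The principal obstacle is verifying the surjectivity of $\A_M^d$ onto the connected component of $\Jbar(X)$ containing a given $\Ll$, which must be argued carefully when $X$ is reducible: one must show that under the hypothesis that $X$ is reduced with locally planar singularities every connected component of $\Jbar(X)$ actually contains line bundles, and that the Riemann--Roch bound producing global sections can be made uniform over the chosen component. Once this is done, the rest of the argument is a formal transfer through the Abel map of the density result for Hilbert schemes.
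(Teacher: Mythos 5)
Your non-emptiness and openness steps match the paper exactly. For density you take a genuinely different route: the paper covers $\Jbar(X)$ by finite-type opens $U_\beta$ admitting \emph{smooth surjective} charts $V_\beta\subseteq\Hilb_X\to U_\beta$ via twisted Abel maps (quoting \cite[Proposition 2.5]{MRVFine}), and then transfers the density statement of Proposition \ref{fibersOfPiStar} chart by chart through the commutative Abel square; you instead argue pointwise, showing each $\Ll$ in the fiber is a specialization of line bundles in the fiber. Your pointwise scheme is sound in outline (closed points are dense in a finite-type stack over $k=\bar k$, and continuity of $\A_M^d$ does push the closure relation forward), and it has the advantage of not needing the smoothness of the Abel charts at all --- only that $\Ll$ lies in the image of \emph{some} twisted Abel map.

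That last point is exactly where your write-up has a gap, and you flag it yourself. First, you ask for much more than you use: your argument never needs $\A_M^d$ to surject onto the whole connected component containing $\Ll$, only that $\Ll\simeq\A_M^d(D)$ for one $D$. Second, the justification you offer for even the weaker statement does not work as stated: $\Ll^{-1}$ is not defined for a non-invertible torsion-free sheaf, and on a reducible curve a Riemann--Roch bound $h^0>0$ produces a nonzero section of $\Hhom(\Ll,M)$ that may vanish identically on a component, hence need not realize $\Ll\otimes M^{-1}$ as a \emph{nondegenerate} ideal. The fix is already in the paper and is easier than Riemann--Roch: since $X$ is reduced, $\Jbar(X)=\Jgen(X)$, so $\Ll$ is the fractional ideal of a generalized divisor $D$; by Lemma \ref{DifferenceOfEffective} write $D=D'-E$ with $D'$ effective and $E$ effective Cartier, so that $\Ll\simeq\Ii_{D'}\otimes\Ii_E^{-1}=\A_M(D')$ with $M=\Ii_E^{-1}$ a line bundle. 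With that substitution your specialization argument closes up and gives a correct, arguably more elementary, proof than the one in the paper; as submitted, however, the key surjectivity input is left unproved.
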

\begin{proof}
First, note that $\Jbar(X)=\Jgen(X)$ since $X$ is reducible. The fiber $\Nm_\pi^{-1}(\Nn)$ is non-empty by Proposition \ref{SurjNorm} and the substack $\Nm_\pi^{-1}(\Nn) \cap J(Y)$ is open in $\Nm_\pi^{-1}(\Nn)$ since being locally free is an open condition. To prove that it is dense, recall that for any fixed line bundle $M \in J(X)$ there is a commutative diagram of stacks over $k$:
\[
\begin{tikzcd}
\Hilb_X \arrow{r}{\pi_*} \arrow{d}{\A_M}  & \lHilb_Y
\arrow{d}{\A_{\Nm_\pi(M)}} \\
\Jbar(X) \arrow{r}{\Nm_\pi} & J(Y)
\end{tikzcd}
\]
Moreover, by \cite[Proposition 2.5]{MRVFine} there is a cover of $\Jbar(X)$ by $k$-finite type open subsets $\{ U_\beta \}$ such that, for each $\beta$, there is a line bundle $M_\beta \in J(X)$ with the property that $\A_{M_\beta|V_\beta}: \A_{M_\beta}^{-1}(U_\beta)=V_\beta \rightarrow U_\beta$ is smooth and surjective.

Since density can be checked locally, fix $M=M_\beta$, $U=U_\beta$ and $V=V_\beta$. Then, we have: \[
\Nm_\pi^{-1}(\Nn) \cap U = \A_M\left( V \cap \pi_*^{-1}\left(\A_{\Nm_\pi(M)}^{-1}(\Oo_Y)\right) \right)
\] and \[
\Nm_\pi^{-1}(\Nn) \cap J(X) \cap U = \A_M\left( V \cap \pi_*^{-1}\left(\A_{\Nm_\pi(M)}^{-1}(\Oo_Y)\right) \cap \lHilb_X \right).
\]
Put $\textbf{K}=\A_{\Nm_\pi(M)}^{-1}(\Oo_Y)$. Then,  the topological space underlying $\pi_*^{-1}(\textbf{K})$ contains the topological union of fibers of the points in $\textbf{K}$, so that: \[
\pi_*^{-1}(\textbf{K}) \supseteq \bigcup_{E \in \textbf{K}} \pi_*^{-1}(E).
\]
By Proposition \ref{fibersOfPiStar}, $\pi_*^{-1}(E) \cap \lHilb_X$ is a non-empty open and dense subscheme of $\pi_*^{-1}(E)$ for any closed point $E \in \textbf{K}$. Hence, $\pi_*^{-1}(\textbf{K}) \cap \lHilb_X$ is non-empty, open and dense in $\pi_*^{-1}(\textbf{K})$. Intersecting with $V$ and composing with $\A_M$, we get the thesis.
\end{proof}

We finally come to Prym stack of $X$ with respect to $Y$. Recall that by definition: \begin{align*}
\Pr(X,Y) &=\Nm_\pi^{-1}(\Oo_Y) \cap J(X), \\
\Prbar(X,Y) &= \Nm_\pi^{-1}(\Oo_Y).
\end{align*}
\begin{cor}
$\Pr(X,Y)$ is non-empty, open and dense in $\Prbar(X,Y)$.
\end{cor}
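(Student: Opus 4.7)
The proof is essentially immediate from Proposition \ref{fibersOfNm}. The plan is to specialize that proposition to the case $\Nn = \Oo_Y$, which is the trivial line bundle on the smooth curve $Y$, and then simply unwind the definitions of $\Pr(X,Y)$ and $\Prbar(X,Y)$.

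More precisely, I would first recall the definitions: $\Prbar(X,Y) = \Nm_\pi^{-1}(\Oo_Y)$ as a substack of $\Jbar(X)$, while $\Pr(X,Y) = \Nm_\pi^{-1}(\Oo_Y) \cap J(X)$, i.e.\ the intersection of $\Prbar(X,Y)$ with the open locus of honest line bundles inside $\Jbar(X)$. Then I would apply Proposition \ref{fibersOfNm} directly with $\Nn = \Oo_Y$: since the hypotheses on $X$ (reduced with locally planar singularities) and $Y$ (smooth) are exactly those assumed in the subsection, the proposition gives that $\Nm_\pi^{-1}(\Oo_Y)$ is non-empty (trivially, as it contains $\Oo_X$, which satisfies $\Nm_\pi(\Oo_X) \simeq \Oo_Y$), and that $\Nm_\pi^{-1}(\Oo_Y) \cap J(X)$ is open and dense inside it.

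There is no real obstacle here: the corollary is just the statement of Proposition \ref{fibersOfNm} applied to a single, specific fiber, namely the one over the trivial bundle. The only minor point worth flagging is that the conclusion is automatically non-empty since the structure sheaf $\Oo_X$ is itself a line bundle with $\Nm_\pi(\Oo_X) \simeq \Oo_Y$, so the dense open substack $\Pr(X,Y) \subseteq \Prbar(X,Y)$ is automatically non-empty as well.
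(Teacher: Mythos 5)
Your proposal is correct and matches the paper's proof exactly: the corollary is obtained by setting $\Nn = \Oo_Y$ in Proposition \ref{fibersOfNm} and unwinding the definitions of $\Pr(X,Y)$ and $\Prbar(X,Y)$. Your additional observation that non-emptiness is already witnessed by $\Oo_X$ is a harmless and correct aside.
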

\begin{proof}
Set $\Nn = \Oo_Y$ and apply Proposition \ref{fibersOfNm}.
\end{proof}

\bibliographystyle{alpha}
\bibliography{norm3}

\end{document}